%
%

%
%

\documentclass[11pt]{amsart}

\setlength{\oddsidemargin}{0.125in}
\setlength{\evensidemargin}{0.125in}
\setlength{\textwidth}{6.375in}
\setlength{\textheight}{8.5in}
\topskip 0in
\topmargin 0.375in
\footskip 0.25in

%
%





\setcounter{tocdepth}{4}



\newtheorem{thm}{Theorem}[section]
\newtheorem*{thm*}{Theorem}

\newtheorem*{lem*}{Lemma}
\newtheorem{mainthm}{Theorem}
\newtheorem*{mainthm*}{Theorem}
\newtheorem{maincor}[mainthm]{Corollary}

\newtheorem{prop}[thm]{Proposition}

\theoremstyle{definition}

\newtheorem*{case*}{Case}

\newtheorem{defn}[thm]{Definition}
\newtheorem*{defn*}{Definition}

\newtheorem*{exmp*}{Example}

\renewcommand{\thestep}{}

\theoremstyle{remark}

\renewcommand{\thecase}{}

\newtheorem{rmk}[thm]{Remark}
\newtheorem*{rmk*}{Remark}


\makeatletter
\def\alphenumi{
  \def\theenumi{\alph{enumi}}
  \def\p@enumi{\theenumi}
  \def\labelenumi{(\@alph\c@enumi)}}
\makeatother




\makeatletter
\def\thecase{\@arabic\c@case}
\makeatother





\makeatletter
\def\thestep{\@arabic\c@step}
\makeatother

%
%

%
%

\newcount\hh
\newcount\mm
\mm=\time
\hh=\time
\divide\hh by 60
\divide\mm by 60
\multiply\mm by 60
\mm=-\mm
\advance\mm by \time
\def\hhmm{\number\hh:\ifnum\mm<10{}0\fi\number\mm}


%

\setlength{\marginparwidth}{1.0in}
\let\oldmarginpar\marginpar
\renewcommand\marginpar[1]{\-\oldmarginpar[\raggedleft\footnotesize #1]%
{\raggedright\footnotesize #1}}






\newcommand\CC{\mathbb{C}}

\newcommand\KK{\mathbb{K}}

\newcommand\PP{\mathbb{P}}

\newcommand\RR{\mathbb{R}}

\newcommand\ZZ{\mathbb{Z}}



\newcommand\fm{{\mathfrak{m}}}


\newcommand\sE{{\mathscr{E}}}
\newcommand\sF{{\mathscr{F}}}

\newcommand\sI{{\mathscr{I}}}

\newcommand\sO{{\mathscr{O}}}



\newcommand\bx{{\mathbf{x}}}

\newcommand\by{{\mathbf{y}}}











\newcommand\less{\setminus}



\newcommand\cosupp{\operatorname{cosupp}}

\DeclareMathOperator{\Crit}{Crit}

\newcommand\dist{\operatorname{dist}}

\newcommand\End{\operatorname{End}}

\newcommand\Hess{\operatorname{Hess}}

\newcommand\Hom{\operatorname{Hom}}

\newcommand\Ker{\operatorname{Ker}}
\newcommand\Length{\operatorname{Length}}

\newcommand\supp{\operatorname{supp}}

\DeclareMathOperator{\Zero}{Zero}






%
%


\numberwithin{equation}{section}
\numberwithin{figure}{section}

\usepackage{graphicx}
\usepackage{hyperref}
\usepackage{mathrsfs}
\usepackage[usenames]{color}
\usepackage{paralist}
\usepackage{slashed}
\usepackage{txfonts}
\usepackage{url}
\usepackage{verbatim}
\usepackage{xr}

\usepackage{amsmath}
\usepackage{amscd}
\usepackage{amssymb}
\usepackage{lmodern}






\makeatletter
\newcommand{\tpitchfork}{%
  \vbox{
    \baselineskip\z@skip
    \lineskip-.52ex
    \lineskiplimit\maxdimen
    \m@th
    \ialign{##\crcr\hidewidth\smash{$-$}\hidewidth\crcr$\pitchfork$\crcr}
  }%
}
\makeatother

\hypersetup{pdftitle={Resolution of Singularities and Geometric Proofs of the Lojasiewicz Inequalities}}
\hypersetup{pdfauthor={Paul M. N. Feehan}}

%
%

\begin{document}

\title[Geometric Proofs of the {\L}ojasiewicz Inequalities]{Resolution of Singularities and Geometric Proofs of the {\L}ojasiewicz Inequalities}

\author[Paul M. N. Feehan]{Paul M. N. Feehan}
\address{Department of Mathematics, Rutgers, The State University of New Jersey, 110 Frelinghuysen Road, Piscataway, NJ 08854-8019, United States of America}
\email{feehan@math.rutgers.edu}

\date{This version: January 6, 2020, incorporating final galley proof corrections. \emph{Geometry \& Topology} \textbf{23} (2019), no. 7, 3273--3313, \url{https://doi.org/10.2140/gt.2019.23.3273} and \url{https://arxiv.org/abs/1708.09775}.}

\begin{abstract}
The {\L}ojasiewicz inequalities for real analytic functions on Euclidean space were first proved by Stanis{\l}aw {\L}ojasiewicz in \cite{Lojasiewicz_1959, Lojasiewicz_1961, Lojasiewicz_1965} using methods of semianalytic and subanalytic sets, arguments later simplified by Bierstone and Milman \cite{BierstoneMilman}. Here, we first give an elementary geometric, coordinate-based proof of the {\L}ojasiewicz inequalities in the special case where the function is $C^1$ with simple normal crossings. We then prove, partly following Bierstone and Milman \cite[Section 2]{Bierstone_Milman_1997} and using resolution of singularities for (real or complex) analytic varieties, that the gradient inequality for an arbitrary analytic function follows from the special case where it has simple normal crossings. In addition, we prove the {\L}ojasiewicz inequalities when a function is $C^N$ and generalized Morse--Bott of order $N\geq 3$; we gave an elementary proof of the {\L}ojasiewicz inequalities when a function is $C^2$ and Morse--Bott on a Banach space in \cite{Feehan_lojasiewicz_inequality_ground_state}.
\end{abstract}

\subjclass[2010]{Primary 32B20, 32C05, 32C18, 32C25, 58E05; secondary 14E15, 32S45, 57R45, 58A07, 58A35}

\keywords{Analytic varieties, {\L}ojasiewicz inequalities, gradient flow, Morse--Bott functions, resolution of singularities, semianalytic sets and subanalytic sets}

\thanks{The author was partially supported by National Science Foundation grant DMS-1510064 and the Simons Center for Geometry and Physics, Stony Brook, the Dublin Institute for Advanced Studies, the Institut des Hautes {\'E}tudes Scientifiques, Bures-sur-Yvette, and the Institute for Advanced Studies, Princeton.}

\maketitle


\section{Introduction}
\label{sec:Introduction}
Our goal is to provide geometric proofs of the {\L}ojasiewicz inequalities (Theorem \ref{mainthm:Lojasiewicz_gradient_inequality} and Corollaries \ref{maincor:Lojasiewicz_distance_inequality} and \ref{maincor:Lojasiewicz_gradient-distance_inequality}) for functions with simple normal crossings and hence, via resolution of singularities, for arbitrary analytic functions on (real or complex) Euclidean space. In contrast, for a function that is (generalized) Morse--Bott (so its critical set is a submanifold), elementary methods suffice to prove the {\L}ojasiewicz inequalities (Theorems \ref{thm:Lojasiewicz_gradient_inequality_Morse-Bott} and \ref{thm:Lojasiewicz_gradient_inequality_generalized_Morse-Bott_function}).

The original proofs by Stanis{\l}aw {\L}ojasiewicz of his inequalities \cite{Lojasiewicz_1963, Lojasiewicz_1964, Lojasiewicz_1965, Lojasiewicz_1970, Lojasiewicz_1993} relied on the theory of semianalytic sets and subanalytic sets originated by him and further developed by Gabri\`elov \cite{Gabrielov_1968}, Hardt \cite{Hardt_1975, Hardt_1977} and Hironaka \cite{Hironaka_1973, Hironaka_1977, Hironaka_intro_real-analytic_sets_maps}. The proofs due to {\L}ojasiewicz of his inequalities are well-known to be technically difficult. The most accessible modern approaches to the inequalities were provided by Bierstone and Milman. In \cite{BierstoneMilman}, they significantly simplify the {\L}ojasiewicz theory of semianalytic sets and subanalytic sets and prove his gradient inequality as a consequence of technical results in that theory. In \cite{Bierstone_Milman_1997}, they develop an approach to resolution of singularities for algebraic and analytic varieties over a field of characteristic zero that relies on blowing up and greatly simplifies the original arguments due to Hironaka et al. \cite{Aroca_Hironaka_Vicente_theory_maximal_contact, Aroca_Hironaka_Vicente_desingularization_theorems, Hironaka_1964-I-II, Hironaka_infinitely_near_singular_points}. They then deduce the {\L}ojasiewicz gradient inequality as a consequence of resolution of singularities for analytic varieties and a direct verification when the critical and zero set of an analytic function is a simple normal crossing divisor.

The {\L}ojasiewicz gradient inequality was generalized by Leon Simon \cite{Simon_1983} to a certain class of real analytic functions on a H\"older space of $C^{2,\alpha}$ sections of a finite-rank vector bundle over a closed, finite-dimensional smooth manifold. Simon's proof relied on a splitting (or Lyapunov--Schmidt reduction) of the real analytic function into a finite-dimensional part, to which the original {\L}ojasiewicz gradient inequality could be applied, and a benign infinite-dimensional part. The resulting {\L}ojasiewicz-Simon gradient inequality and its many generalizations and variants have played a significant role in analyzing questions such as
\begin{inparaenum}[\itshape a\upshape)]
\item global existence, convergence, and analysis of singularities for solutions to nonlinear evolution equations that are realizable as gradient-like systems for an energy function,
\item uniqueness of tangent cones, and
\item gap theorems.
\end{inparaenum}
See Feehan \cite{Feehan_yang_mills_gradient_flow_v4}, Feehan and Maridakis \cite{Feehan_Maridakis_Lojasiewicz-Simon_Banach, Feehan_Maridakis_Lojasiewicz-Simon_coupled_Yang-Mills_v4}, and Huang \cite{Huang_2006} for references and a survey of {\L}ojasiewicz--Simon gradient inequalities for analytic functions on Banach spaces and their many applications in applied mathematics, geometric analysis, and mathematical physics.

Our hope is that the more geometric and direct coordinate-based approaches provided in this article to proofs of the {\L}ojasiewicz gradient inequality may yield greater insight that could be useful when endeavoring to prove gradient inequalities for functions on Banach spaces arising in geometric analysis without relying on Lyapunov--Schmidt reduction to the gradient inequality for functions on Euclidean space or attempting to extend methods specific to algebraic geometry. For example, the {\L}ojasiewicz inequalities for the $F$ functional on the space of hypersurfaces in Euclidean space are proved directly by Colding and Minicozzi \cite{Colding_Minicozzi_2014sdg, Colding_Minicozzi_2015, Colding_Minicozzi_Pedersen_2015bams} and by the author for the Yang--Mills energy function near regular points in the moduli space of flat connections on a principal $G$-bundle over a closed, smooth Riemannian manifold \cite{Feehan_lojasiewicz_inequality_ground_state}. Applications in geometric analysis typically concern functions on infinite-dimensional manifolds and, in that context, arguments specific to semianalytic sets or subanalytic subsets or real analytic subvarieties of Euclidean space do not necessarily have analogues in infinite-dimensional geometry. Like Bierstone and Milman in \cite[Section 2]{Bierstone_Milman_1997}, we ultimately apply resolution of singularities to obtain the {\L}ojasiewicz gradient inequality for an arbitrary analytic function, but after directly proving the gradient inequality in simpler cases. When the function is $C^N$ and Morse--Bott of order $N\geq 2$, we obtain a {\L}ojasiewicz exponent $\theta=1-1/N$ (see Theorems \ref{thm:Lojasiewicz_gradient_inequality_Morse-Bott} and \ref{thm:Lojasiewicz_gradient_inequality_generalized_Morse-Bott_function}) and when the function is $C^1$ with simple normal crossings, we obtain an explicit bound for the {\L}ojasiewicz exponent --- which implies that $\theta \in [1/2,1)$ rather than $\theta \in (0,1)$ --- together with a characterization of when $\theta$ has the optimal value $1/2$.

We showed in \cite[Section 4]{Feehan_lojasiewicz_inequality_ground_state} that one can use the Mean Value Theorem to prove the {\L}ojasiewicz gradient inequality for a $C^2$ Morse--Bott function on a Banach space in a context of wide applicability
\cite[Theorem 3]{Feehan_lojasiewicz_inequality_ground_state}. The facts that a Morse--Bott function has a critical set which is a smooth submanifold and a Hessian which is non-degenerate on the normal bundle ensure that the Mean Value Theorem easily yields the {\L}ojasiewicz gradient inequality (with optimal {\L}ojasiewicz exponent $1/2$). In Section \ref{sec:Lojasiewicz_gradient_inequality_critical_normal_crossing_divisor}, we prove that the {\L}ojasiewicz gradient inequality (Theorem \ref{mainthm:Lojasiewicz_gradient_inequality_normal_crossing_divisor}) holds for a $C^1$ function that has simple normal crossings in the sense of Definition \ref{defn:C1_function_normal_crossings}. We then appeal to resolution of singularities (Theorem \ref{thm:Monomialization_ideal_sheaf}) to show that the {\L}ojasiewicz gradient inequality for an arbitrary analytic function, Theorem \ref{mainthm:Lojasiewicz_gradient_inequality}, is a straightforward consequence of Theorem \ref{mainthm:Lojasiewicz_gradient_inequality_normal_crossing_divisor}. This incremental approach makes it clear that the essential difficulty is due neither to the high dimension of the ambient Euclidean space nor the critical set, but instead due (as should be expected) to possibly complicated singularities in the critical set.

Simplifications of {\L}ojasiewicz's proofs \cite{Lojasiewicz_1965} of his inequalities have also been given by Kurdyka and Parusi{\'n}ski \cite{Kurdyka_Parusinski_1994}, where they use the fact that a subanalytic set in Euclidean space admits a strict Thom stratification. {\L}ojasiewicz and Zurro \cite{Lojasiewicz_Zurro_1999} further simplified the arguments of Kurdyka and Parusi{\'n}ski to prove the {\L}ojasiewicz inequalities, again using properties of subanalytic sets.

The problem of estimating {\L}ojasiewicz exponents or determining their properties, often for restricted classes of functions (for example, polynomials, certain analytic functions, functions with isolated critical points, and so on), has been pursued by many researchers, including Abderrahmane \cite{Abderrahmane_2005}, Bivi\`a-Ausina \cite{Bivia-Ausina_2015}, Bivi\`a-Ausina and Encinas \cite{Bivia-Ausina_Encinas_2009, Bivia-Ausina_Encinas_2011, Bivia-Ausina_Encinas_2013}, Bivi\`a-Ausina and Fukui \cite{Bivia-Ausina_Fukui_2016}, Brzostowski \cite{Brzostowski_2015}, Brzostowski, Krasi\'nski, and Oleksik \cite{Brzostowski_Krasinski_Oleksik_2012}, B\'ui and Pham \cite {Bui_Pham_2014}, D'Acunto and Kurdyka \cite{dAcunto_Kurdyka_2005}, Fukui \cite{Fukui_1991}, Gabri\`elov \cite{Gabrielov_1995} Gwo\'zdziewicz \cite{Gwozdziewicz_1999}, Haraux \cite[Theorem 3.1]{Haraux_2005}, Haraux and Pham \cite{Haraux_Pham_2011, Haraux_Pham_2015}, Ji, Koll{\'a}r, and Shiffman \cite{Ji_Kollar_Shiffman_1992} Koll\'ar \cite{Kollar_1999}, Krasi\'nski, Oleksik, and P{\l}oski \cite{Krasinski_Oleksik_Ploski_2009}, Kuo \cite{Kuo_HH_1974}, Lichtin \cite{Lichtin_1981}, Lenarcik \cite{Lenarcik_1998, Lenarcik_1999}, Lion \cite{Lion_2000}, Oka \cite{Oka_2017arxiv}, Oleksik \cite{Oleksik_2009}, Pham \cite{Pham_2011, Pham_2012}, P{\l}oski \cite{Ploski_2009}, Risler and Trotman \cite{Risler_Trotman_1997}, Rodak and Spodzieja \cite{Rodak_Spodzieja_2011}, \cite{Spodzieja_2005}, Tan, Yau, and Zuo \cite{Tan_Yau_Zuo_2010}, and Teissier \cite{Teissier_1977}. Recently, simpler coordinate-based proofs of more limited versions of resolution of singularities for zero sets of real analytic functions, with applications to analysis, have been given by Collins, Greenleaf, and Pramanik \cite{Collins_Greenleaf_Pramanik_2013} and Greenblatt \cite{Greenblatt_2008}. In particular, Greenblatt  \cite[p. 1959]{Greenblatt_2008} applies his version of resolution of singularities to prove the {\L}ojasiewicz inequality \eqref{eq:Lojasiewicz_division_inequality_pair_functions} for a pair of real analytic functions where the zero set of one is contained in the zero set of the other. Bivi\`a-Ausina and Encinas \cite{Bivia-Ausina_Encinas_2009} use a resolution of singularities algorithm to estimate {\L}ojasiewicz exponents.

{\L}ojasiewicz \cite{Lojasiewicz_1959, Lojasiewicz_1961} applied his distance inequality (Corollary \ref{maincor:Lojasiewicz_distance_inequality}) to prove the Division Conjecture of Schwartz \cite[p. 181]{Schwartz_1955},
\cite[p. 116]{Schwartz_distributions_v1}. In \cite{Lojasiewicz_1963}, he used his gradient inequality (Theorem \ref{mainthm:Lojasiewicz_gradient_inequality}) to give a positive answer \cite[Theorem 5]{Lojasiewicz_1963} to a question of Whitney: If $\sE$ is a real analytic function on an open set $U \subset \RR^d$, then $\sE^{-1}(0)$ is a deformation retract of its neighborhood. This deformation retract is obtained using the negative gradient flow defined by $\sE$. He also applies his inequalities to show that every (locally closed) semianalytic set in Euclidean space admits a Whitney stratification\footnote{The first page number refers to the version of {\L}ojasiewicz's original manuscript mimeographed by IHES while the page number in parentheses refers to the cited LaTeX version of his manuscript prepared by M. Coste and available on the Internet.} \cite[Proposition 3, p. 97 (71)]{Lojasiewicz_1965}. The {\L}ojasiewicz gradient inequality (Theorem \ref{mainthm:Lojasiewicz_gradient_inequality}) was used by Kurdyka, Mostowski, and Parusi{\'n}ski \cite{Kurdyka_Mostowski_Parusinski_2000} to prove the Gradient Conjecture of Thom.

Atiyah \cite{Atiyah_1970} and Bernstein and Gelfand \cite{Bernstein_Gelfand_1969} appear to be the first authors to have noticed that resolution of singularities could be used to simplify proofs of {\L}ojasiewicz's results, a fact that we discovered only when correcting galley proofs for this article. In \cite{Atiyah_1970}, Atiyah employed resolution of singularities to give a simple proof of the Division Conjecture, using methods similar to those in our proof of Theorem \ref{mainthm:Lojasiewicz_gradient_inequality}. Atiyah notes \cite[p. 145]{Atiyah_1970} that Bernstein and Gelfand independently proved the Division Conjecture using related ideas in \cite{Bernstein_Gelfand_1969}. The only article that is firmly in the literature on {\L}ojasiewicz inequalities that cites Atiyah is due to Bivi{\`a}--Ausina and Fukui \cite{Bivia-Ausina_Fukui_2016}.

\subsection{Main results}
\label{subsec:Main_results}
We let $\KK=\RR$ or $\CC$ and state the main results to be proved in this article, categorized according to whether or not their proofs appeal to resolution of singularities.

\subsubsection{Gradient inequality using resolution of singularities}
\label{subsubsec:Main_results_resolution_singularities}
We begin with the fundamental

\begin{mainthm}[{\L}ojasiewicz gradient inequality for an analytic function]
\label{mainthm:Lojasiewicz_gradient_inequality}
(See {\L}ojasiewicz \cite[Proposition 1, p. 92 (67)]{Lojasiewicz_1965}.)
Let $d \geq 1$ be an integer, $U \subset \KK^d$ be an open subset, and $\sE:U\to\KK$ be an analytic function. If $x_\infty\in U$ is a point such that $\sE'(x_\infty) = 0$, then there are constants $C_0 \in (0, \infty)$, and $\sigma_0 \in (0,1]$, and $\theta \in [1/2,1)$ such that the differential map, $\sE':U \to \KK^{d*}$, obeys
\begin{equation}
\label{eq:Lojasiewicz_gradient_inequality}
\|\sE'(x)\|_{\KK^{d*}} \geq C_0|\sE(x) - \sE(x_\infty)|^\theta,
\quad\text{for all } x \in B_{\sigma_0}(x_\infty),
\end{equation}
where $\KK^{d*} = (\KK^d)^*$, the dual space of $\KK^d$ and $B_{\sigma_0}(x_\infty) := \{x \in \KK^d: \|x-x_\infty\|_{\KK^d} < \sigma_0 \} \subset U$.
\end{mainthm}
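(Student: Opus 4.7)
The plan is to deduce this from the gradient inequality for functions with simple normal crossings (Theorem \ref{mainthm:Lojasiewicz_gradient_inequality_normal_crossing_divisor}) by invoking the monomialization form of resolution of singularities (Theorem \ref{thm:Monomialization_ideal_sheaf}). Without loss of generality, translate so that $x_\infty = 0$, and after subtracting the constant $\sE(x_\infty)$, assume $\sE(0)=0$. Because the inequality is local, choose a relatively compact open neighborhood $V$ of $0$ on which $\sE$ is analytic and apply resolution of singularities to the principal ideal $(\sE) \subset \sO_V$. This produces a proper analytic map $\pi: \tV \to V$, an isomorphism away from the zero set of $\sE$, such that for every $\ty \in \pi^{-1}(0)$ one can choose local coordinates $y_1,\dots,y_d$ on an open neighborhood $W_{\ty} \subset \tV$ in which $(\sE\circ\pi)(y) = u(y)\, y_1^{a_1}\cdots y_d^{a_d}$ for some analytic unit $u$ and multi-index $(a_1,\dots,a_d) \in \NN^d$. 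In particular $\sE\circ\pi$ has simple normal crossings at $\ty$ in the sense of Definition \ref{defn:C1_function_normal_crossings}.

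Next, since $\pi$ is proper, the preimage $\pi^{-1}(\barB_{\sigma_1}(0))$ is compact for some small $\sigma_1 \in (0,1]$, so it is covered by finitely many charts $W_1,\dots,W_m$ of the type above. On each chart, Theorem \ref{mainthm:Lojasiewicz_gradient_inequality_normal_crossing_divisor} supplies constants $C_j \in (0,\infty)$ and $\theta_j \in [1/2,1)$ with
\begin{equation*}
\|(\sE\circ\pi)'(y)\|_{\KK^{d*}} \;\geq\; C_j\,|(\sE\circ\pi)(y)|^{\theta_j}, \quad \forall\, y \in W_j.
\end{equation*}
Setting $C_1 := \min_j C_j$ and $\theta := \max_j \theta_j \in [1/2,1)$, the inequality holds uniformly on $\pi^{-1}(\barB_{\sigma_1}(0))$. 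Here I am using that, because the $a_i$ and $u$ can be read off in explicit coordinates, the normal crossings theorem yields an exponent in $[1/2,1)$ together with the exponent control already built into its statement.

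Finally, I push the inequality down to $U$ via the chain rule. For $x \in B_{\sigma_0}(0)$ with $\sigma_0 \in (0,\sigma_1]$ sufficiently small, the properness of $\pi$ guarantees that $\pi^{-1}(x)$ is nonempty and contained in the compact set $\pi^{-1}(\barB_{\sigma_1}(0))$. Pick any $y \in \pi^{-1}(x)$; then $(\sE\circ\pi)(y) = \sE(x)$ and
\begin{equation*}
(\sE\circ\pi)'(y) \;=\; \sE'(x)\circ \pi'(y),
\end{equation*}
whence
\begin{equation*}
\|(\sE\circ\pi)'(y)\|_{\KK^{d*}} \;\leq\; \|\sE'(x)\|_{\KK^{d*}}\,\|\pi'(y)\|_{\mathrm{op}}.
\end{equation*}
Since $\pi'$ is continuous and $\pi^{-1}(\barB_{\sigma_0}(0))$ is compact, there is $M<\infty$ with $\|\pi'(y)\|_{\mathrm{op}} \leq M$ there, giving
\begin{equation*}
\|\sE'(x)\|_{\KK^{d*}} \;\geq\; \frac{C_1}{M}\,|\sE(x)-\sE(x_\infty)|^{\theta},
\end{equation*}
which is \eqref{eq:Lojasiewicz_gradient_inequality} with $C_0 := C_1/M$.

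The main obstacle I expect is purely organizational rather than analytic: extracting from the monomialization theorem precisely the local normal form on which Theorem \ref{mainthm:Lojasiewicz_gradient_inequality_normal_crossing_divisor} applies (so that both the zero locus and the critical locus of $\sE\circ\pi$ are governed by the same normal crossing divisor), and then verifying carefully that the chain-rule estimate, which goes in the favorable direction for us, together with properness of $\pi$, yields a bound depending only on $C_1$ and the supremum of $\|\pi'\|$ over a compact set, and does not require any lower bound on $\pi'$ along the exceptional divisor where $\pi$ is genuinely singular.
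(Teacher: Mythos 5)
Your proof is correct and pursues the same overall strategy as the paper---principalize the ideal $\sE\sO_U$ via Theorem \ref{thm:Monomialization_ideal_sheaf}, apply Theorem \ref{mainthm:Lojasiewicz_gradient_inequality_normal_crossing_divisor} upstairs to $\sE\circ\pi$, and push the estimate back down by the chain rule---but you handle the step of transferring the inequality to a full neighborhood of the origin more carefully than the paper does. The paper works in a single chart centered at one chosen point $0\in\pi^{-1}(0)$, obtains the gradient inequality for $\sE\circ\pi$ on a small ball $B_\delta$ there, and then asserts that ``the map $\pi$ is open and so $\pi(B_\delta)$ is an open neighborhood of the origin.'' A resolution morphism is \emph{not} an open map, since it collapses the exceptional divisor to a lower-dimensional set: already for the one-step blow-up chart $(u,v)\mapsto(u,uv)$ of $\KK^2$ at the origin, the image of a ball $\{|u|,|v|<\delta\}$ is contained in the wedge $\{|y|<\delta|x|\}\cup\{0\}$ and contains no ball around the origin. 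So, as written, the paper's argument yields the inequality only on a sector-shaped set. Your version repairs this exactly as one should: properness makes $\pi^{-1}(\barB_{\sigma_1})$ compact, a finite subcover of local normal-crossings charts furnishes uniform constants, properness again ensures $\pi^{-1}(B_{\sigma_0})$ eventually falls inside that finite union, and surjectivity guarantees every $x\in B_{\sigma_0}$ has a preimage in some chart, through which the chain-rule estimate descends.

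Two small points to tighten in a final writeup. First, to replace the exponents $\theta_j$ by their maximum $\theta$ you need $|\sE\circ\pi|\le 1$ on $\pi^{-1}(\barB_{\sigma_1})$ so that $|\sE\circ\pi|^{\theta_j}\ge|\sE\circ\pi|^{\theta}$; this is arranged by shrinking $\sigma_1$ since $\sE(0)=0$ and $\pi^{-1}(\barB_{\sigma_1})$ is compact. Second, Theorem \ref{mainthm:Lojasiewicz_gradient_inequality_normal_crossing_divisor} assumes $\sE'(0)=0$ at the center of the chart; on charts around points $y_0\in\pi^{-1}(\barB_{\sigma_1})$ where $(\sE\circ\pi)'(y_0)\ne 0$ it does not literally apply, but there the inequality holds trivially since $\|(\sE\circ\pi)'\|$ is bounded below and $|\sE\circ\pi|$ bounded above on a slightly smaller ball. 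With those patches your argument is complete, and it in fact supplies a correct account of a step the paper glosses over.
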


By definition, the \emph{{\L}ojasiewicz exponent} $\theta$ of a $C^1$ function $\sE$ at a point $x_\infty$ in its domain is the \emph{smallest} $\theta \geq 0$ such that the inequality \eqref{eq:Lojasiewicz_gradient_inequality} holds for some positive constant $C_0$ and all $x$ in an open neighborhood of $x_\infty$.

Theorem \ref{mainthm:Lojasiewicz_gradient_inequality} was stated by {\L}ojasiewicz in \cite[Theorem 4]{Lojasiewicz_1963} and proved by him as \cite[Proposition 1, p. 92]{Lojasiewicz_1965}; see also {\L}ojasiewicz \cite[p. 1592]{Lojasiewicz_1993}. Bierstone and Milman provided simplified proofs as \cite[Proposition 6.8]{BierstoneMilman} and
\cite[Theorem 2.7]{Bierstone_Milman_1997}. Their strategy in \cite{BierstoneMilman} is to first prove a {\L}ojasiewicz inequality \cite[Theorem 6.4]{BierstoneMilman} of the form
\begin{equation}
\label{eq:Lojasiewicz_division_inequality_pair_functions}
|g(x)| \geq C|f(x)|^\lambda, \quad\text{for all } x \in B_\sigma,
\end{equation}
where $f$ and $g$ are subanalytic functions on an open neighborhood $U\subset\RR^d$ of the origin such that $g^{-1}(0) \subset f^{-1}(0)$ and $B_\sigma \subset U$ and $\lambda \in (0,\infty)$. They then deduce a {\L}ojasiewicz gradient inequality \cite[Theorem 6.8]{BierstoneMilman} for a real analytic function $f$ with $f'(0)=0$,
\begin{equation}
\label{eq:Lojasiewicz_gradient_inequality_pair_functions}
\|f'(x)\|_{\RR^{d*}} \geq C|f(x)|^\nu, \quad\text{for all } x \in B_\sigma,
\end{equation}
with $\nu \in (0,1)$ by choosing $g = \|f'\|_{\RR^{d*}}$. In \cite[Theorem 2.5]{Bierstone_Milman_1997}, the authors establish \eqref{eq:Lojasiewicz_division_inequality_pair_functions} for a pair of (real or complex) analytic functions by using resolution of singularities to reduce to the case that the ideal in the ring of analytic functions, $\sO_X$, generated by $fg$ has simple normal crossings. In \cite[Theorem 2.7]{Bierstone_Milman_1997}, they then obtain \eqref{eq:Lojasiewicz_gradient_inequality_pair_functions} for an analytic function $f$ with $f(0)=0$ and $f'(0)=0$ by choosing $g = \|f'\|_{\KK^{d*}}^2$ and applying \eqref{eq:Lojasiewicz_division_inequality_pair_functions} to the pair of functions $f^2$ (replacing $g$) and $f^2/g$ (replacing $f$) and proving that $f^{-1}(0) \subset (f^2/g)^{-1}(0)$ and $\nu = 1/\lambda\in (0,1)$, after employing resolution of singularities to the ideal $fg\sO_X$.

Our more direct proof of Theorem \ref{mainthm:Lojasiewicz_gradient_inequality} makes it clear that one always has $\theta\geq 1/2$, whereas previous proofs only gave $\theta \in (0,1)$. For applications to geometric analysis and topology, it is essential to have $\theta < 1$, with $\theta=1/2$ being the optimal exponent, corresponding to exponential convergence for the negative gradient flow defined by $\sE$. In particular, we have:

\begin{maincor}[Characterization of the optimal exponent and Morse--Bott condition]
\label{maincor:Characterization_optimal_exponent_Morse-Bott_condition}
Assume the hypotheses of Theorem \ref{mainthm:Lojasiewicz_gradient_inequality}
and that $x_\infty$ is the origin. If $\theta=1/2$ then, after possibly shrinking $U$, there are an open neighborhood of the origin, $\widetilde{U} \subset \KK^d$, and an analytic map, $\pi:\widetilde{U}\to U$, such that $\pi$ is an analytic diffeomorphism on the complement of a coordinate hyperplane or the
union of two coordinate hyperplanes and $\pi^*\sE$ is Morse--Bott at the origin in the sense of Definition \ref{defn:Morse-Bott_function}.
\end{maincor}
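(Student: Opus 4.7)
The plan is to apply resolution of singularities (Theorem~\ref{thm:Monomialization_ideal_sheaf}) to the principal ideal $(\sE-\sE(x_\infty))$ and then extract the Morse--Bott structure from the resulting monomial form. After possibly shrinking $U$, this resolution produces a proper analytic map $\pi\colon\widetilde{X}\to U$ which is a local analytic isomorphism off a simple normal crossings divisor $D\subset\widetilde{X}$, and such that at every $p\in\pi^{-1}(x_\infty)$ there exist local coordinates $(x_1,\dots,x_d)$ centered at $p$ in which
\[
\pi^*(\sE-\sE(x_\infty))(x)\;=\;u(x)\,x_1^{m_1}\cdots x_d^{m_d},
\]
with $u$ an analytic unit and $D$ supported on a union of those coordinate hyperplanes $\{x_i=0\}$ through $p$. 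The chart $\widetilde{U}$ in the statement will be such a chart around a carefully chosen $p$.

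The key monomial computation is this: for $f=u\,x^m$ the Łojasiewicz exponent at the origin equals $\theta(f,0)=1-1/|m|$, where $|m|:=m_1+\cdots+m_d$. This follows by writing the ratio $\|f'(x)\|^2/|f(x)|^{2\theta}$ as a sum of the monomials $m_i^{\,2}\prod_j|x_j|^{2m_j(1-\theta)-2\delta_{ij}}$, restricting to asymptotic rays $x_j=\eps^{c_j}$ ($c_j>0$, $\eps\to 0^+$), and invoking the elementary identity $\sup_{c>0}\sum_jc_jm_j/\max_{i:m_i\geq 1}c_i=|m|$. Thus $\theta(f,0)=1/2$ if and only if $|m|=2$, leaving exactly two possibilities: $m=2e_{i_0}$ for some index $i_0$, or $m=e_{i_0}+e_{j_0}$ for a pair of distinct indices.

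The hypothesis $\theta(\sE,x_\infty)=1/2$ combined with the preservation of the Łojasiewicz exponent under the resolution --- $\theta(\sE,x_\infty)$ equals the minimum over $p\in\pi^{-1}(x_\infty)$ of the local exponents of $\pi^*\sE$, which is standard in the principalization framework --- provides a preimage $p$ at which the local exponent is $1/2$. Taking this $p$ as the origin of $\widetilde{U}$, the two cases of the monomial analysis give
\[
\pi^*\sE(x)=\sE(x_\infty)+u(x)\,x_{i_0}^{\,2}\qquad\text{or}\qquad \pi^*\sE(x)=\sE(x_\infty)+u(x)\,x_{i_0}x_{j_0}.
\]
The first is Morse--Bott at the origin with critical hypersurface $\{x_{i_0}=0\}$ and transverse Hessian $2u(0)\neq 0$, and $D$ is supported on the single coordinate hyperplane $\{x_{i_0}=0\}$. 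The second is a non-degenerate Morse saddle in the $(x_{i_0},x_{j_0})$-plane --- a special Morse--Bott function with critical submanifold $\{x_{i_0}=x_{j_0}=0\}$ and rank-two transverse Hessian of off-diagonal entries $u(0)\neq 0$ --- and $D$ is supported on $\{x_{i_0}=0\}\cup\{x_{j_0}=0\}$. Either way, $\pi^*\sE$ is Morse--Bott at the origin of $\widetilde{U}$ (in the sense of Definition~\ref{defn:Morse-Bott_function}), and the non-diffeomorphism locus of $\pi$ lies in a coordinate hyperplane or the union of two, exactly as claimed.

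The principal obstacle is justifying the existence of a preimage $p$ realizing the local exponent $1/2$. At "bad" preimages --- for instance the origin of the chart $\pi(u,v)=(uv,v)$ in the blow-up of the origin in $\RR^2$ applied to $\sE=x^2$, where $\pi^*\sE=u^2v^2$ has $|m|=4$ and local exponent $3/4$ --- the local monomial form may have $|m|>2$. The remedy is to select $p$ at a generic point of $\pi^{-1}(x_\infty)$, i.e.\ on a single component of $D$ rather than on the self-intersection locus of $D$; at such a point the monomial exponent vector is minimal and realizes the global exponent, as is essentially Bierstone--Milman's strategy for Theorem~\ref{mainthm:Lojasiewicz_gradient_inequality}. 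Once $p$ is chosen, the monomial Łojasiewicz computation and the identification of the Hessian are routine.
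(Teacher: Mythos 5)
Your proposal follows the paper's route in outline: apply resolution of singularities to monomialize $\sE-\sE(x_\infty)$, invoke the simple-normal-crossings exponent formula $\theta=1-1/N$ from Theorem~\ref{mainthm:Lojasiewicz_gradient_inequality_normal_crossing_divisor}, conclude $N=2$ from $\theta=1/2$, and read off the Morse--Bott structure from the two possible monomials $ux_{i_0}^2$ and $ux_{i_0}x_{j_0}$; the final Hessian identification is also the same. The genuine difference --- and it is a real contribution on your part --- is that you make explicit a step the paper's proof passes over silently: the resolution $\pi$ and the preimage point $p\in\pi^{-1}(x_\infty)$ chosen in the proof of Theorem~\ref{mainthm:Lojasiewicz_gradient_inequality} are arbitrary, and at an arbitrary $p$ the local Łojasiewicz exponent of $\pi^*\sE$ can strictly exceed $\theta(\sE,x_\infty)$ (your $\sE=x^2$ example with $\pi^*\sE=u^2v^2$ is exactly the right counterexample). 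The paper's proof simply writes ``$\pi^*\sE$ has Łojasiewicz exponent $\theta=1-1/N$. In particular, if $\theta=1/2$ then $N=2$'', conflating the exponent of $\sE$ with that of $\pi^*\sE$ at the chosen preimage; it never argues that a minimizing $p$ exists or is the one selected.

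That said, your remedy is asserted rather than established. The identity $\theta(\sE,x_\infty)=\min_{p\in\pi^{-1}(x_\infty)}\theta(\pi^*\sE,p)$ is not an immediate consequence of the chain rule: the chain rule gives $\|(\pi^*\sE)'(\tilde x)\|\leq M\,\|\sE'(\pi(\tilde x))\|$ and the reverse bound degenerates on the exceptional divisor, so the inequality $\theta(\sE,x_\infty)\geq\min_p\theta(\pi^*\sE,p)$ --- which is the direction you actually need to get $N_p=2$ for some $p$, since $N_p\geq 2$ for all $p$ is automatic from $(\pi^*\sE)'(p)=\sE'(x_\infty)\pi'(p)=0$ --- requires an argument, and ``standard in the principalization framework'' should be replaced by a proof or a precise citation (e.g.\ in the spirit of \cite[Theorem~2.7]{Bierstone_Milman_1997}). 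There is also a small internal tension in your last paragraph: ``select $p$ on a single component of $D$'' would always put you in the $m=N e_{i_0}$ situation and would never produce the $m=e_{i_0}+e_{j_0}$ case that you enumerate two paragraphs earlier; either the generic-point remedy should be weakened to ``on as few components as possible,'' or the casework should be rephrased so that only the rank-one monomial actually arises. None of this is fatal --- your structure is right and the gap you flag is a genuine one in the paper as written --- but the key lemma on exponent preservation needs to be stated precisely and proved, not waved at.
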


See the author's \cite[Theorem 3]{Feehan_yang_mills_gradient_flow_v4} for the statement and proof of a very general convergence-rate result for an abstract gradient flow on a Banach space defined by an analytic function obeying a {\L}ojasiewicz--Simon gradient inequality with exponent $\theta \in [1/2,1)$ and for previous versions of related convergence-rate results, see Chill, Haraux, and Jendoubi \cite[Theorem 2]{Chill_Haraux_Jendoubi_2009}, Haraux, Jendoubi, and Kavian \cite[Propositions 3.1 and 3.4]{Haraux_Jendoubi_Kavian_2003}, Huang \cite[Theorem 3.4.8]{Huang_2006}, and R\r{a}de \cite[Proposition 7.4]{Rade_1992}. Convergence-rate results related to \cite[Theorem 3]{Feehan_yang_mills_gradient_flow_v4} are implicit in Adams and Simon \cite{Adams_Simon_1988} and Simon \cite{Simon_1983, Simon_1985, Simon_1996}, although we cannot find an explicit statement like this in those references.

\subsubsection{Gradient inequality without using resolution of singularities}
\label{subsubsec:Main_results_without_resolution_singularities}
The proof of Theorem \ref{mainthm:Lojasiewicz_gradient_inequality} in full generality provided in this article employs embedded resolution of singularities (partly following Bierstone and Milman \cite[Section 2]{Bierstone_Milman_1997}), but there are several weaker gradient inequalities that can be proved by far more elementary methods and those provide insight to applications in geometric analysis. We now describe several results of this kind. For example, when the function $\sE$ in Theorem \ref{mainthm:Lojasiewicz_gradient_inequality} is $C^2$ (respectively, $C^N$ with $N\geq 2$) and Morse--Bott (respectively, Morse--Bott of order $N$), rather than an arbitrary analytic function, one obtains the {\L}ojasiewicz gradient inequality with exponent $\theta=1/2$ (respectively, $\theta=1-1/N$) as a consequence of the Mean Value Theorem (respectively, Taylor Theorem): see Theorems \ref{thm:Lojasiewicz_gradient_inequality_Morse-Bott} and \ref{thm:Lojasiewicz_gradient_inequality_generalized_Morse-Bott_function}. We refer the reader to Section \ref{sec:Lojasiewicz-Simon_gradient_inequality_Morse-Bott} for a discussion of the Morse--Bott condition and some its generalizations, together with the statements and proofs of Theorems \ref{thm:Lojasiewicz_gradient_inequality_Morse-Bott} and \ref{thm:Lojasiewicz_gradient_inequality_generalized_Morse-Bott_function}.

A first reading of the proof of Theorem \ref{thm:Lojasiewicz_gradient_inequality_generalized_Morse-Bott_function}, which is based on a direct application of the Taylor Theorem, might suggest that it would extend to the case where $\sE$ is an analytic function and $U\cap \Crit\sE$ is an arbitrary analytic subvariety. However, one finds that this is a more difficult strategy to develop than one might naively expect. Instead, as a stepping stone towards Theorem \ref{mainthm:Lojasiewicz_gradient_inequality}, we shall first establish a special case that holds for a class of $C^1$ functions. By analogy with Collins, Greenleaf, and Pramanik \cite[Definition 2.5]{Collins_Greenleaf_Pramanik_2013}, we make the

\begin{defn}[Function with simple normal crossings]
\label{defn:C1_function_normal_crossings}
A $C^1$ function $f:U\to\KK$ on an open neighborhood of the origin in $\KK^d$ has \emph{simple normal crossings} if
\begin{equation}
\label{eq:C1_function_normal_crossings}
f(x) = x_1^{n_1}\cdots x_d^{n_d}f_0(x), \quad\text{for all } x = (x_1,\ldots,x_d) \in U,
\end{equation}
where $n_i \in \ZZ\cap [0,\infty)$ and $f_0$ is a $C^1$ function such that $f_0(0)\neq 0$ and $N=\sum_{i=1}^dn_i$ is the \emph{total degree} of the monomial $x_1^{n_1}\cdots x_d^{n_d}$.
\end{defn}

See Sections \ref{subsec:Normal_crossing_divisors} and \ref{subsec:Ideals_smiple_normal_crossing_divisors} for a review of normal crossings and simple normal crossings divisors in (real or complex) analytic geometry.

\begin{defn}[Morse--Bott function]
\label{defn:Morse-Bott_function}
Let $d\geq 1$ be an integer, $U \subset \KK^d$ be an open subset, $\sE:U\to\KK$ be a $C^2$ function, and $\Crit\sE := \{x\in U:\sE'(x) = 0\}$. We say that $\sE$ is \emph{Morse--Bott at a point} $x_\infty \in \Crit\sE$ if
\begin{inparaenum}[\itshape a\upshape)]
\item\label{item:Morse-Bott_function_critical_set_submanifold} $\Crit\sE$ is a $C^2$ submanifold of $U$, and
\item\label{item:Morse-Bott_function_tangent_space_critical_set_equals_kernel_hessian} $T_{x_\infty}\Crit\sE = \Ker\sE''(x_\infty)$ when $\sE''(x_\infty)$ is considered as an operator in $\Hom_\KK(\KK^d,\KK^{d*})$,
\end{inparaenum}
where $T_x\Crit\sE$ is the tangent space to $\Crit\sE$ at a point $x \in \Crit\sE$.
\end{defn}

In applications to topology (see, for example, Austin and Braam \cite[Section 3.1]{Austin_Braam_1995} for equivariant Floer cohomology and Bott \cite{Bott_1959} for the Periodicity Theorem), our local Definition \ref{defn:Morse-Bott_function} is often augmented by conditions that $\Crit\sE$ be compact, as in Bott \cite[Definition, p. 248]{Bott_1954}, or compact and connected as in Nicolaescu
\cite[Definition 2.41]{Nicolaescu_morse_theory}, and that $T_x\Crit\sE = \Ker\sE''(x)$ for all $x \in \Crit\sE$.

\begin{mainthm}[{\L}ojasiewicz gradient inequality for a $C^1$ function with simple normal crossings and characterization of the optimal exponent and Morse--Bott condition]
\label{mainthm:Lojasiewicz_gradient_inequality_normal_crossing_divisor}
Let $d \geq 2$ be an integer, $U \subset \KK^d$ be an open neighborhood of the origin, and $\sE:U \to \KK$ be a $C^1$ function with simple normal crossings. If $\sE'(0)=0$, then the followibng hold.
\begin{enumerate}
\item There are constants $C_0 \in (0,\infty)$ and $\sigma \in (0,1]$ such that
\begin{equation}
\label{eq:Lojasiewicz_gradient_inequality_normal_crossing_divisor}
\|\sE'(x)\|_{\KK^{d*}} \geq C_0|\sE(x)|^\theta, \quad\text{for all } x \in B_\sigma,
\end{equation}
where $\theta = 1-1/N \in [1/2,1)$ and $N=\sum_{i=1}^dn_i$ is the total degree of the monomial in the expression \eqref{eq:C1_function_normal_crossings} for $\sE$.
\item If $c$ is the number of exponents $n_i\geq 1$ for $i=1,\ldots,d$, then $c\geq 2$ or $c=1$ and (after relabeling coordinates) $n_1\geq 2$.
\item One has $\theta=1/2$ if and only if $c=2$ and (after relabeling coordinates) $n_1=n_2=1$ or $c=1$ and $n_1=2$.
\item If $\theta=1/2$ and $\sE$ is $C^2$, then $\sE$ is Morse--Bott on $B_\sigma$.
\end{enumerate}
\end{mainthm}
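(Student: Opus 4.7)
Plan. Write $\sE(x) = m(x)f_0(x)$ with $m(x) = x_1^{n_1}\cdots x_d^{n_d}$ and $f_0(0)\neq 0$, set $I = \{i: n_i\geq 1\}$, and observe that the inequality \eqref{eq:Lojasiewicz_gradient_inequality_normal_crossing_divisor} evaluated at $x=0$ forces $\sE(0)=0$, hence $m(0)=0$, hence $c = |I|\geq 1$. Choose $\sigma > 0$ small enough that on $B_\sigma$ one has $|f_0|\geq |f_0(0)|/2$ and $\max_i|\partial_i f_0|\leq M$. For each $i\in I$ and each $x\in B_\sigma$ with $x_i\neq 0$, the Leibniz rule yields
\begin{equation*}
\partial_i\sE(x) = \frac{n_i}{x_i}\,\sE(x) + m(x)\,\partial_i f_0(x),
\qquad\text{whence}\qquad
|\partial_i\sE(x)| \geq \frac{n_i|\sE(x)|}{|x_i|} - M|m(x)|.
\end{equation*}

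The geometric core of the proof is a weighted arithmetic-geometric mean observation: for $m(x)\neq 0$, the identity $|m(x)|^{1/N} = \prod_{i\in I}|x_i|^{n_i/N}$ exhibits $|m(x)|^{1/N}$ as a weighted geometric mean of $\{|x_i|:i\in I\}$ with weights $n_i/N$ summing to $1$, whence $\min_{i\in I}|x_i|\leq |m(x)|^{1/N}$. Any $i^*\in I$ realizing this minimum satisfies $|x_{i^*}|\leq |m(x)|^{1/N}\leq |x|\leq \sigma$. After further shrinking $\sigma\leq |f_0(0)|/(4M)$, the cross term $M|m|$ is absorbed into half the leading one, giving
\begin{equation*}
\|\sE'(x)\|_{\KK^{d*}} \geq |\partial_{i^*}\sE(x)|
\geq \frac{n_{i^*}}{2}\cdot\frac{|\sE(x)|}{|m(x)|^{1/N}}
= \frac{n_{i^*}|f_0(x)|^{1/N}}{2}\,|\sE(x)|^{1-1/N}
\geq C_0\,|\sE(x)|^{1-1/N}.
\end{equation*}
The case $\sE(x)=0$ is trivial; this establishes (1) with $\theta = 1-1/N$.

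For part (2), $c\geq 1$ is already established, and the case $c=1$ with $n_1=1$ is ruled out by $\partial_1\sE(0) = f_0(0)\neq 0$, which contradicts $\sE'(0) = 0$. Hence $c\geq 2$ or ($c=1$ and $n_1\geq 2$), so $N\geq 2$ and $\theta\geq 1/2$. Part (3) follows because $\theta = 1-1/N = 1/2 \iff N = 2$; combined with (2), this leaves exactly the two alternatives $c=2$ with $n_1=n_2=1$, or $c=1$ with $n_1=2$. (Optimality of the exponent $1-1/N$ is verified by testing along $x(t) = \sum_{i\in I}te_i$, where $\sE(x(t))\sim f_0(0)\,t^N$ and $\|\sE'(x(t))\|_{\KK^{d*}}\sim C\,t^{N-1}$ as $t\downarrow 0$.)

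For part (4), in the case $c=2$, $n_1=n_2=1$ direct computation gives $\sE''(0) = f_0(0)(e_1\otimes e_2^{*}+e_2\otimes e_1^{*})$ with $\Ker\sE''(0) = \vecspan\{e_3,\ldots,e_d\}$; in the case $c=1$, $n_1=2$ one has $\sE''(0) = 2f_0(0)\,e_1\otimes e_1^{*}$ with $\Ker\sE''(0) = \vecspan\{e_2,\ldots,e_d\}$. Solving $\sE'(x) = 0$ on $B_\sigma$ (using $f_0\neq 0$ to eliminate the non-monomial factor of each $\partial_i\sE$) reduces in both cases to the equations $x_i = 0$ for $i\in I$, so $\Crit\sE\cap B_\sigma = \{x_i=0: i\in I\}$ is a coordinate submanifold whose tangent space at the origin coincides with $\Ker\sE''(0)$, verifying the conditions of Definition \ref{defn:Morse-Bott_function}. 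The main technical obstacle is the weighted AM-GM selection of $i^*$ together with the uniform absorption of the cross term $m\,\partial_i f_0$; this algebraic-to-analytic bridge is what yields the explicit exponent $1-1/N\geq 1/2$, in contrast to the unspecified $\theta\in(0,1)$ produced by the semianalytic approach of \cite{BierstoneMilman}.
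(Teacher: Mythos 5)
Your proof is correct and establishes all four parts, but it takes a noticeably different route from the paper's argument and yields the same explicit exponent by a somewhat simpler device. The paper computes the full $\ell^2$-norm of the gradient, obtaining the lower bound \eqref{eq:Gradient_squared_lower_bound}
\[
\|\sE'(x)\|_{\KK^{d*}}^2 \geq \frac{\sF(x)^2}{4}\prod_{i=1}^{c}x_i^{2n_i}\sum_{j=1}^{c}x_j^{-2},
\]
and then invokes the Generalized Young Inequality \eqref{eq:Generalized_Young_inequality} to bound $\sum_j x_j^{-2}$ from below by $\frac{N}{n}\prod_i x_i^{-2n_i/N}$; it also handles $c=1$ as a separate case. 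You instead keep only the single partial derivative $\partial_{i^*}\sE$ for the index $i^*$ realizing $\min_{i\in I}|x_i|$, and use the elementary observation that a weighted geometric mean dominates the minimum, $\min_{i\in I}|x_i|\leq\prod_{i\in I}|x_i|^{n_i/N}=|m(x)|^{1/N}$. This single-coordinate argument handles $c=1$ and $c\geq 2$ uniformly, sidesteps the full Young inequality, and gives the identical exponent $\theta=1-1/N$. You additionally supply the curve test $x(t)=\sum_{i\in I}te_i$ showing that $1-1/N$ is the {\L}ojasiewicz exponent, as well as explicit verifications of parts (3) and (4); the paper's own proof effectively establishes parts (1) and (2) and leaves (3) and (4) implicit, so your write-up is actually more complete. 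Two small points worth tightening: your opening remark that ``the inequality \eqref{eq:Lojasiewicz_gradient_inequality_normal_crossing_divisor} evaluated at $x=0$ forces $\sE(0)=0$'' is logically backwards as written (you cannot use the conclusion to secure $c\geq 1$); you should instead observe, as the paper does implicitly, that the hypotheses only produce a meaningful statement when $c\geq 1$, equivalently $\sE(0)=0$, and that $c=0$ must be excluded from the outset. And for part (4), you verify $T_{x_\infty}\Crit\sE=\Ker\sE''(x_\infty)$ only at the origin, whereas ``Morse--Bott on $B_\sigma$'' presumably means at every point of $\Crit\sE\cap B_\sigma$; the same computation goes through verbatim at any $p\in\Crit\sE\cap B_\sigma$ (since $f_0(p)\neq 0$), but you should say so.
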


\begin{rmk}[Geometry of the critical set]
\label{rmk:Geometry_critical_set}
Theorem \ref{thm:Lojasiewicz_gradient_inequality_Morse-Bott} shows that, when $\sE$ is Morse--Bott and so its critical set is a smooth submanifold, then its {\L}ojasiewicz exponent $\theta$ is equal to $1/2$. Conversely, when $\theta=1/2$,
Theorem \ref{mainthm:Lojasiewicz_gradient_inequality_normal_crossing_divisor} implies that $B_\sigma\cap\Crit\sE = \{x_1=0\}\cap B_\sigma$ or $\{x_1=x_2=0\}\cap B_\sigma$, a codimension-one or codimension-two smooth submanifold of $B_\sigma$. Theorem \ref{mainthm:Lojasiewicz_gradient_inequality} is proved by applying resolution of singularities to an ideal defined by an arbitrary analytic function $\sE$ and applying Theorem \ref{mainthm:Lojasiewicz_gradient_inequality_normal_crossing_divisor} to the resulting monomial (the product of $x_1^{n_1}\cdots x_d^{n_c}$ and a non-vanishing analytic function). Consequently, if $\theta=1/2$ then there is a constraint on the nature of the singularities in the critical set of $\sE$. Our proof of Theorem \ref{mainthm:Lojasiewicz_gradient_inequality} shows that application of resolution of singularities does not change the {\L}ojasiewicz exponent and so it is of interest to try to characterize the class of analytic functions with $\theta=1/2$, a topic that we explore in Feehan \cite{Feehan_lojasiewicz_inequality_all_dimensions_morse-bott}. As noted in our Introduction, the problem of computing or estimating {\L}ojasiewicz exponents remains a topic of active research.
\end{rmk}

Our proof of Theorem \ref{mainthm:Lojasiewicz_gradient_inequality_normal_crossing_divisor} is a direct coordinate-based alternative to an argument due to Bierstone and Milman \cite[Section 2]{Bierstone_Milman_1997} and relies only on the Generalized Young Inequality \eqref{eq:Generalized_Young_inequality} (see Remark \eqref{rmk:Generalized_Young_inequality}). We are grateful to Alain Haraux for pointing out that the value for $\theta$ in previous versions of this article could be improved to the value now stated in Theorem \ref{mainthm:Lojasiewicz_gradient_inequality_normal_crossing_divisor} and for alerting us to his \cite[Theorem 3.1]{Haraux_2005}. His result is more closely related to Theorem \ref{mainthm:Lojasiewicz_gradient_inequality_normal_crossing_divisor} than we had realized (it assumes $f_0=1$ in the expression \eqref{eq:C1_function_normal_crossings}) and we were unaware that his proof also used the Generalized Young Inequality.

\subsubsection{Consequences of the gradient inequality}
\label{subsubsec:Consequences_Lojasiewicz_gradient_inequality}
Regardless of how proved, the gradient inequality \eqref{eq:Lojasiewicz_gradient_inequality} easily yields two useful corollaries. Note that if $\sE(x)$ is differentiable at $x=x_0$ and $\sE(x_0)=0$, then $\sE(x)^2$ has a critical point at $x=x_0$. We say that a subset $A\subset\KK^d$ is \emph{$C^k$-arc connected} if any two points in $A$ can be joined by a $C^k$ curve, where $k \in \ZZ\cap [0,\infty)$ or $k=\omega$ (analytic), and \emph{locally $C^k$-arc connected} if for every point $x\in A$ has an open neighborhood $U\subset \KK^d$ such that $U\cap A$ is $C^k$-arc connected.

\begin{maincor}[{\L}ojasiewicz distance inequalities]
\label{maincor:Lojasiewicz_distance_inequality}
(Compare {\L}ojasiewicz \cite[Theorem 2, p. 85 (62)]{Lojasiewicz_1965}.)
Let $d \geq 1$ be an integer, $U \subset \RR^d$ be an open neighborhood of the origin and $\sE:U\to\RR$ be a $C^1$ function.
\begin{enumerate}
\item \emph{(Distance to the critical and zero sets.)}
\label{item:Distance_critical_set}  
If $\sE(0)=0$ and $\sE'(0)=0$ and $\sE \geq 0$ on $U$ and $\sE$ obeys the {\L}ojasiewicz gradient inequality \eqref{eq:Lojasiewicz_gradient_inequality} near the origin, then there are constants $C_1 \in (0, \infty)$, and $\delta \in (0,\sigma/4]$, and $\alpha=1/(1-\theta) \in [2,\infty)$ such that
\begin{equation}
\label{eq:Lojasiewicz_distance_inequality_critical_set}
\sE(x) \geq C_1\dist_{\RR^d}(x, B_\sigma\cap \Crit\sE)^\alpha, \quad\text{for all } x \in B_\delta,
\end{equation}
where $\dist_{\RR^d}(x,A) := \inf\{\|x-a\|_{\RR^d}: a\in A\}$, for any point $x\in\RR^d$ and subset $A \subset \RR^d$. If in addition $B_\sigma\cap \Crit\sE \subset B_\sigma\cap \Zero\sE$, then
\begin{equation}
\label{eq:Lojasiewicz_distance_inequality_zero_set}
\sE(x) \geq C_1\dist_{\RR^d}(x, B_\sigma\cap \Zero\sE)^\alpha, \quad\text{for all } x \in B_\delta,
\end{equation}
where $\Zero\sE := \{x\in U:\sE(x)=0\}$.

\item \emph{(Distance to the zero set.)} 
\label{item:Distance_zero_noncritical_set}
If $\sE(0)=0$ and $\sE^2$ (in place of $\sE$) obeys the {\L}ojasiewicz gradient inequality \eqref{eq:Lojasiewicz_gradient_inequality} near the origin and $B_\sigma\cap \Crit\sE^2 \subset B_\sigma\cap \Zero\sE$, then there are constants $C_2 \in (0, \infty)$, and $\delta \in (0,\sigma/4]$, and $\beta =1/(2(1-\theta))\in [1,\infty)$ such that
\begin{equation}
\label{eq:Lojasiewicz_distance_inequality_zero_noncritical_set}
|\sE(x)| \geq C_2\dist_{\RR^d}(x, B_\sigma\cap \Zero\sE)^\beta, \quad\text{for all } x \in B_\delta.
\end{equation}
\end{enumerate}
\end{maincor}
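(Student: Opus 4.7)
The plan is to derive both inequalities from the gradient inequality \eqref{eq:Lojasiewicz_gradient_inequality} via the classical device of the negative gradient flow. Given $x\in B_\delta$, I would consider the ODE $\dot u(t) = -\nabla\sE(u(t))$ with $u(0)=x$, where $\nabla\sE\in\RR^d$ is the Euclidean dual of $\sE'\in\RR^{d*}$, so that $|\dot u(t)|=\|\sE'(u(t))\|_{\RR^{d*}}$ and $\tfrac{d}{dt}\sE(u(t)) = -\|\sE'(u(t))\|_{\RR^{d*}}^2$. The guiding idea is that \eqref{eq:Lojasiewicz_gradient_inequality} converts this gradient decay into a finite trajectory-length bound controlled by $\sE(x)^{1-\theta}$, so $u(t)$ converges to some $u_\infty\in B_\sigma\cap\Crit\sE$ whose distance from $x$ bounds $\dist_{\RR^d}(x,B_\sigma\cap\Crit\sE)$ from above.

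The key estimate is obtained by differentiating $\sE(u(t))^{1-\theta}$ (which makes sense since $\sE\geq 0$): using the chain rule, then the gradient inequality \eqref{eq:Lojasiewicz_gradient_inequality} with $x_\infty=0$ and $\sE(0)=0$,
\begin{equation*}
-\frac{d}{dt}\sE(u(t))^{1-\theta}
= (1-\theta)\,\sE(u)^{-\theta}\,\|\sE'(u)\|_{\RR^{d*}}^2
\;\geq\; (1-\theta)\,C_0\,\|\sE'(u)\|_{\RR^{d*}}
\;=\; (1-\theta)\,C_0\,|\dot u(t)|,
\end{equation*}
valid on any interval on which $u(t)\in B_\sigma$ and $\sE(u(t))>0$; the endpoint case $\sE(u(t))=0$ forces $u(t)\in\Crit\sE$ directly, since $\sE\geq 0$. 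Integrating yields the length bound $\int_0^T|\dot u(t)|\,dt \leq \sE(x)^{1-\theta}/((1-\theta)C_0)$. The main obstacle is ensuring the trajectory never leaves $B_\sigma$, so that the gradient inequality continues to apply. For this I would shrink $\delta$ so that $\delta\leq\sigma/4$ and, using continuity of $\sE$ at the origin with $\sE(0)=0$, also $\sup_{y\in B_\delta}\sE(y)^{1-\theta}/((1-\theta)C_0)\leq\sigma/4$; the length bound then combines with a standard continuation argument on $T^\star:=\sup\{T:u([0,T])\subset B_\sigma\}$ to force $u(t)\in B_{\sigma/2}$ for all $t$ on any existence interval, with existence of some solution supplied by Peano's theorem.

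Finite length then forces $u(t)\to u_\infty\in\overline{B_{\sigma/2}}$. The identity $\tfrac{d}{dt}\sE(u)=-\|\sE'(u)\|_{\RR^{d*}}^2$ together with $\sE\geq 0$ gives $\int_0^\infty\|\sE'(u(t))\|_{\RR^{d*}}^2\,dt\leq\sE(x)<\infty$, which combined with continuity of $\sE'$ forces $\sE'(u_\infty)=0$, so $u_\infty\in B_\sigma\cap\Crit\sE$. Consequently
\begin{equation*}
\dist_{\RR^d}(x,B_\sigma\cap\Crit\sE) \;\leq\; \|x-u_\infty\|_{\RR^d} \;\leq\; \frac{\sE(x)^{1-\theta}}{(1-\theta)C_0},
\end{equation*}
which rearranges to \eqref{eq:Lojasiewicz_distance_inequality_critical_set} with $\alpha=1/(1-\theta)\in[2,\infty)$ (using $\theta\in[1/2,1)$) and $C_1=((1-\theta)C_0)^\alpha$. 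Inequality \eqref{eq:Lojasiewicz_distance_inequality_zero_set} follows immediately from $B_\sigma\cap\Crit\sE\subset B_\sigma\cap\Zero\sE$, since this inclusion implies $\dist_{\RR^d}(x,B_\sigma\cap\Zero\sE)\leq\dist_{\RR^d}(x,B_\sigma\cap\Crit\sE)$.

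For the second assertion, I would apply the first assertion to $\sE^2$ in place of $\sE$: by hypothesis $\sE^2$ obeys the gradient inequality \eqref{eq:Lojasiewicz_gradient_inequality} near the origin with exponent $\theta$, while $\sE^2(0)=0$, $(\sE^2)'(0)=2\sE(0)\sE'(0)=0$, and $\sE^2\geq 0$, so the first assertion yields
\begin{equation*}
\sE(x)^2 \;\geq\; C\,\dist_{\RR^d}(x,B_\sigma\cap\Crit\sE^2)^{1/(1-\theta)}.
\end{equation*}
Because $(\sE^2)'=2\sE\,\sE'$ vanishes on $\Zero\sE$, the reverse inclusion $\Zero\sE\subset\Crit\sE^2$ holds automatically, and combining it with the hypothesis $B_\sigma\cap\Crit\sE^2\subset B_\sigma\cap\Zero\sE$ yields the equality $B_\sigma\cap\Crit\sE^2=B_\sigma\cap\Zero\sE$. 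Taking square roots of the preceding inequality then gives \eqref{eq:Lojasiewicz_distance_inequality_zero_noncritical_set} with $\beta=1/(2(1-\theta))\in[1,\infty)$ and $C_2=\sqrt{C}$.
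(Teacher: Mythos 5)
Your proposal is correct and follows essentially the same route as the paper: both integrate the negative gradient flow, use the Lyapunov function $\sE^{1-\theta}$ together with the gradient inequality to bound the trajectory length by $\sE(x)^{1-\theta}/((1-\theta)C_0)$, bound $\dist_{\RR^d}(x,B_\sigma\cap\Crit\sE)$ by that length, and then deduce Item~(2) by applying Item~(1) to $\sE^2$. The only differences are cosmetic: the paper reparametrizes the flow by arc length and delegates trajectory confinement in $B_{\sigma/2}$ and convergence to a critical point to its Theorem~\ref{thm:Gradient_flow}, whereas you differentiate $\sE^{1-\theta}$ directly in $t$ and sketch the continuation argument inline, and for Item~(2) the paper invokes the second conclusion of Item~(1) for $\sE^2$ while you invoke the first and observe $B_\sigma\cap\Crit\sE^2 = B_\sigma\cap\Zero\sE$.
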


\begin{rmk}[Analytic functions obey the hypotheses of Corollary \ref{maincor:Lojasiewicz_distance_inequality}]
\label{rmk:Lojasiewicz_distance_inequality_analytic}
If $\sE$ is analytic, then the hypotheses in Corollary \ref{maincor:Lojasiewicz_distance_inequality} that $\sE$ or $\sE^2$ obey \eqref{eq:Lojasiewicz_gradient_inequality} are implied by Theorem \ref{mainthm:Lojasiewicz_gradient_inequality}.

Moreover, if $\sE$ is analytic, then $\Crit\sE$ and $\Crit\sE^2$ are analytic subvarieties of $U$ and thus locally connected by \cite[Corollary 2.7 (3)]{BierstoneMilman} and hence locally $C^0$-arc connected by \cite[Exercise 29F]{Kowalsky_topological_spaces}. By Gabri{\'e}lov \cite[p. 283]{Gabrielov_1968}, analytic subvarieties of $U$ are locally analytic-arc connected and so $\Crit\sE$ and $\Crit\sE^2$ are locally $C^1$-arc connected by \cite[p. 283]{Gabrielov_1968} when $\sE$ is analytic. In particular, if $\Crit\sE$ is locally $C^1$-arc connected, then $B_\sigma\cap \Crit\sE \subset B_\sigma\cap \Zero\sE$, as assumed in the second half of Item \eqref{item:Distance_critical_set}; if $\Crit\sE^2$ is locally $C^1$-arc connected, then $B_\sigma\cap \Crit\sE^2 \subset B_\sigma\cap \Zero\sE$, as assumed in Item \eqref{item:Distance_zero_noncritical_set}.
\end{rmk}

When $\sE$ is analytic, Item \eqref{item:Distance_zero_noncritical_set} in Corollary \ref{maincor:Lojasiewicz_distance_inequality} was stated by {\L}ojasiewicz in \cite[Corollary, p. 88]{Lojasiewicz_1963} and proved by him in \cite[Theorem 17, p. 124]{Lojasiewicz_1959}, \cite[Theorem 17, p. 40]{Lojasiewicz_1961}; it was restated and proved by him as \cite[Theorem 2, p. 85 (62)]{Lojasiewicz_1965}. Simplified proofs of Item \eqref{item:Distance_zero_noncritical_set} in Corollary \ref{maincor:Lojasiewicz_distance_inequality} were provided by Bierstone and Milman as \cite[Theorem 6.4 and Remark 6.5]{BierstoneMilman} and
\cite[Theorem 2.8]{Bierstone_Milman_1997}. When $\sE$ is a polynomial on $\RR^d$, Corollary \ref{maincor:Lojasiewicz_distance_inequality} is due to H\"ormander \cite[Lemma 1]{Hormander_1958}. The next result is obtained by combining Theorem \ref{mainthm:Lojasiewicz_gradient_inequality} and Item \eqref{item:Distance_critical_set} in Corollary \ref{maincor:Lojasiewicz_distance_inequality}.

\begin{maincor}[{\L}ojasiewicz gradient-distance inequality for a non-negative function]
\label{maincor:Lojasiewicz_gradient-distance_inequality}
Let $d \geq 1$ be an integer, $U \subset \RR^d$ be an open neighborhood of a point $x_\infty$, and $\sE:U\to\RR$ be a
$C^1$ function. If $\sE'(x_\infty)=0$ and $\sE\geq 0$ on $U$ and $\sE$ obeys the {\L}ojasiewicz gradient inequality \eqref{eq:Lojasiewicz_gradient_inequality} near $x_\infty$, then there are constants $C_2 \in (0, \infty)$, and $\delta \in (0,\sigma/4]$, and $\mu=\theta/(1-\theta) \in [1,\infty)$ such that 
\begin{equation}
\label{eq:Lojasiewicz_gradient-distance_inequality}
\|\sE'(x)\|_{\RR^{d*}} \geq C_2\dist(x, B_\sigma\cap \Crit\sE)^\mu,
\quad\text{for all } x \in B_\delta(x_\infty).
\end{equation}
\end{maincor}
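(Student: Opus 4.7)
The plan is to derive the gradient-distance inequality \eqref{eq:Lojasiewicz_gradient-distance_inequality} by chaining Item (1) of Corollary \ref{maincor:Lojasiewicz_distance_inequality} with the hypothesized {\L}ojasiewicz gradient inequality \eqref{eq:Lojasiewicz_gradient_inequality}. Since $\sE'$, $\Crit\sE$, and the distance function $\dist_{\RR^d}(\cdot, \Crit\sE)$ are all invariant under translation of the domain and under adding a constant to $\sE$, I would first translate $x_\infty$ to the origin and, if necessary, replace $\sE$ by $\sE - \sE(x_\infty)$, so that the reduced function satisfies $\sE(0) = 0$, $\sE'(0) = 0$, and $\sE \geq 0$ on $U$. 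In this normalized form the hypotheses of Item (1) of Corollary \ref{maincor:Lojasiewicz_distance_inequality} are in force.

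Applying that distance inequality would then yield constants $C_1 > 0$, $\delta_1 \in (0, \sigma/4]$, and exponent $\alpha = 1/(1-\theta)$ such that
\[
\sE(x) \geq C_1\,\dist_{\RR^d}(x, B_\sigma \cap \Crit\sE)^\alpha, \quad \forall\, x \in B_{\delta_1},
\]
while \eqref{eq:Lojasiewicz_gradient_inequality} now reads $\|\sE'(x)\|_{\RR^{d*}} \geq C_0\,\sE(x)^\theta$ on $B_{\sigma_0}$. Setting $\delta := \min\{\delta_1, \sigma_0\}$, raising the distance estimate to the power $\theta$, and substituting into the gradient inequality would produce the chain
\[
\|\sE'(x)\|_{\RR^{d*}} \geq C_0\,\sE(x)^\theta \geq C_0 C_1^\theta\,\dist_{\RR^d}(x, B_\sigma \cap \Crit\sE)^{\alpha\theta}, \quad \forall\, x \in B_\delta,
\]
which is exactly \eqref{eq:Lojasiewicz_gradient-distance_inequality} with $C_2 := C_0 C_1^\theta$ and $\mu := \alpha\theta = \theta/(1-\theta)$. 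The advertised range $\mu \in [1,\infty)$ then follows at once from the range $\theta \in [1/2, 1)$ supplied by Theorem \ref{mainthm:Lojasiewicz_gradient_inequality}; in particular the optimal exponent $\theta = 1/2$ corresponds to the linear bound $\mu = 1$.

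No substantial technical obstacle should arise here, since the heavy lifting has already been carried out in proving the gradient inequality (Theorem \ref{mainthm:Lojasiewicz_gradient_inequality}) and the distance inequality (Corollary \ref{maincor:Lojasiewicz_distance_inequality}). The only points that need a brief check are that the normalizing translation preserves every hypothesis (trivial), that raising to the power $\theta > 0$ preserves the direction of the inequality (which uses $\sE \geq 0$ on a neighborhood of the origin), and that the common radius $\delta$ on which both source estimates hold is strictly positive.
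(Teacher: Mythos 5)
Your argument is correct and is essentially the paper's own proof: normalize $x_\infty = 0$, chain the gradient inequality \eqref{eq:Lojasiewicz_gradient_inequality} with the distance inequality \eqref{eq:Lojasiewicz_distance_inequality_critical_set}, and read off $\mu = \alpha\theta = \theta/(1-\theta)$ and $C_2 = C_0 C_1^\theta$. The only cosmetic slip is that the range $\theta \in [1/2,1)$ is here part of the hypothesis (the statement assumes $\sE$ obeys \eqref{eq:Lojasiewicz_gradient_inequality}, whose constants include $\theta \in [1/2,1)$) rather than something supplied by Theorem \ref{mainthm:Lojasiewicz_gradient_inequality}, since $\sE$ is only assumed $C^1$ in this corollary.
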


When $\sE$ is analytic, the hypothesis in Corollary \ref{maincor:Lojasiewicz_gradient-distance_inequality} that $\sE\geq 0$ on $U$ can be relaxed.

\begin{maincor}[{\L}ojasiewicz gradient-distance inequality for an analytic function]
\label{maincor:Lojasiewicz_gradient-distance_inequality_analytic}
Let $d \geq 1$ be an integer, $U \subset \RR^d$ be an open neighborhood of a point $x_\infty$, and $\sE:U\to\RR$ be an analytic function. If $\sE'(x_\infty)=0$, then there are constants $C_3 \in (0, \infty)$, and $\sigma_1 \in (0,1]$, and $\delta_1 \in (0,\sigma_1/4]$, and
$\gamma \in [1/2,\infty)$ such that 
\begin{equation}
\label{eq:Lojasiewicz_gradient-distance_inequality_analytic}
\|\sE'(x)\|_{\RR^{d*}} \geq C_3\dist_{\RR^d}(x, B_{\sigma_1}\cap \Crit\sE)^\gamma,
\quad\text{for all } x \in B_{\delta_1}(x_\infty).
\end{equation}
\end{maincor}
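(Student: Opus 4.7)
The plan is to reduce to Corollary \ref{maincor:Lojasiewicz_gradient-distance_inequality}, which already handles the non-negative case, by considering the squared gradient norm. Set $g(x) := \|\sE'(x)\|_{\RR^{d*}}^2$. Since $\sE$ is real analytic, so is $g$; moreover $g \geq 0$ on $U$ and $\Zero g = \Crit\sE$. Because $\sE'(x_\infty) = 0$, we have $g(x_\infty) = 0$, so $x_\infty$ is a local minimum of $g$ and hence $g'(x_\infty) = 0$ as well. Theorem \ref{mainthm:Lojasiewicz_gradient_inequality} applied to $g$ at $x_\infty$ therefore produces a {\L}ojasiewicz gradient inequality for $g$ with some exponent $\theta' \in [1/2, 1)$ on a ball $B_{\sigma_1}(x_\infty) \subset U$.

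The next step is to show that $\Crit g$ agrees with $\Crit\sE$ on $B_{\sigma_1}(x_\infty)$, after possibly shrinking $\sigma_1$. The inclusion $\Crit\sE \subset \Crit g$ is trivial since $g'(x) = 2\sE''(x)\sE'(x)$. The reverse inclusion $\Crit g \cap B_{\sigma_1}(x_\infty) \subset \Zero g$ is the main technical point, and this is where real analyticity (as opposed to mere $C^1$-regularity, which is all that is assumed in Corollary \ref{maincor:Lojasiewicz_gradient-distance_inequality}) is essential. I would argue that $\Crit g$, being a real analytic subvariety of $U$, is locally analytic-arc connected at $x_\infty$ by Gabri{\'e}lov's theorem cited in Remark \ref{rmk:Lojasiewicz_distance_inequality_analytic}. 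Hence, after shrinking $\sigma_1$, for any $x_0 \in B_{\sigma_1}(x_\infty) \cap \Crit g$ there is an analytic arc $\gamma:[0,1]\to B_{\sigma_1}(x_\infty)\cap\Crit g$ with $\gamma(0) = x_\infty$ and $\gamma(1) = x_0$. Along $\gamma$, the chain rule gives
\[
\frac{d}{dt}\bigl[g(\gamma(t))\bigr] = \langle g'(\gamma(t)), \gamma'(t)\rangle_{\RR^d} = 0,
\]
so $g\circ\gamma$ is constant, and $g(x_0) = g(x_\infty) = 0$. Therefore $B_{\sigma_1}(x_\infty)\cap\Crit g \subset B_{\sigma_1}(x_\infty)\cap\Zero g = B_{\sigma_1}(x_\infty)\cap\Crit\sE$, and combined with the trivial inclusion we conclude $B_{\sigma_1}(x_\infty)\cap\Crit g = B_{\sigma_1}(x_\infty)\cap\Crit\sE$.

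Finally, I would apply Corollary \ref{maincor:Lojasiewicz_distance_inequality}, Item \eqref{item:Distance_critical_set}, to the $C^1$ function $g$: its hypotheses (namely $g(x_\infty)=0$, $g'(x_\infty)=0$, $g\geq 0$, and $g$ satisfying the {\L}ojasiewicz gradient inequality near $x_\infty$) are all verified above, yielding constants $C_1 \in (0,\infty)$, $\delta_1 \in (0,\sigma_1/4]$, and $\alpha = 1/(1-\theta') \in [2,\infty)$ with
\[
g(x) \geq C_1\dist_{\RR^d}(x, B_{\sigma_1}\cap\Crit g)^\alpha, \quad x \in B_{\delta_1}(x_\infty).
\]
Substituting $g = \|\sE'\|_{\RR^{d*}}^2$, replacing $\Crit g$ by $\Crit\sE$ using the equality just established, and taking square roots gives \eqref{eq:Lojasiewicz_gradient-distance_inequality_analytic} with $C_3 = \sqrt{C_1}$ and $\gamma = \alpha/2 = 1/\bigl(2(1-\theta')\bigr) \in [1,\infty) \subset [1/2,\infty)$.

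As anticipated, the step I expect to be the main obstacle is ruling out ``spurious'' critical points of $g$ that are not zeros of $g$, i.e., the inclusion $\Crit g \subset \Zero g$ locally near $x_\infty$. This is precisely what allows us to dispense with the sign hypothesis $\sE \geq 0$ present in Corollary \ref{maincor:Lojasiewicz_gradient-distance_inequality}, and it rests crucially on the analytic hypothesis through Gabri{\'e}lov's local analytic-arc-connectedness theorem for analytic subvarieties.
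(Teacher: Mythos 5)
Your proof is correct and follows essentially the same route as the paper: both set $g = \|\sE'\|_{\RR^{d*}}^2$, use Gabri\'elov's local analytic-arc-connectedness of analytic subvarieties to identify the relevant critical/zero loci with $\Crit\sE$ near $x_\infty$ (you spell this out directly; the paper invokes it through Remark~\ref{rmk:Lojasiewicz_distance_inequality_analytic}), then apply Corollary~\ref{maincor:Lojasiewicz_distance_inequality} and take square roots. The only small variation is that the paper uses Item~\eqref{item:Distance_zero_noncritical_set}, which internally squares once more (working with $\|\sE'\|^4$), whereas you apply Item~\eqref{item:Distance_critical_set} directly to $g = \|\sE'\|^2$ after verifying $\Crit g \cap B_{\sigma_1} = \Crit\sE \cap B_{\sigma_1}$; this is marginally more economical but yields the same conclusion.
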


The inequality \eqref{eq:Lojasiewicz_gradient-distance_inequality_analytic} is stated by Simon in \cite[Equation (2.3)]{Simon_1983} and attributed by him to {\L}ojasiewicz \cite{Lojasiewicz_1965}.

\subsubsection{Counterexamples}
\label{subsubsec:Counterexample_Lojasiewicz_gradient_inequality}
It is known but worth remembering that the {\L}ojasiewicz gradient inequality fails in general for functions that are smooth but not analytic. For example, De Lellis \cite{DeLellis_Encyc-Math_LS_inequality} notes that when $d=1$, then the function
\[
\sE(x)
=
\begin{cases}
e^{1/|x|}, &x \neq 0,
\\
0, &x=0,
\end{cases}
\]
is $C^\infty$ on $\RR$ with $\Crit\sE = \{0\}$ but that the inequality \eqref{eq:Lojasiewicz_gradient_inequality} fails on any open neighborhood of the origin. When $d=2$ and $\KK=\RR$, Haraux shows in \cite[Proposition 5.2]{Haraux_2005} that for the $C^1$ function,
\[
\sE(x,y)
=
\begin{cases}
(x^2+y^2)e^{-(x^2+y^2)/x^2}, &x \neq 0,
\\
0, &x=0,
\end{cases}
\]
the inequality \eqref{eq:Lojasiewicz_gradient_inequality} fails on any neighborhood of the origin. Moreover, failure of a smooth function to satisfy the {\L}ojasiewicz gradient inequality may result in
non-convergence of its negative gradient flow: see Haraux \cite[Remark 5.5]{Haraux_2005} (citing Palis and de Melo \cite{Palis_Welington_geometric_theory_dynamical_systems}), Haraux and Jendoubi \cite[Section 12.8]{Haraux_Jendoubi_convergence_dissipative_autonomous_systems}, and Lerman \cite{Lerman_2005} (citing \cite[p. 14]{Palis_Welington_geometric_theory_dynamical_systems}).

\subsection{Outline}
\label{subsec:Outline}
We begin in Section \ref{sec:Lojasiewicz-Simon_gradient_inequality_Morse-Bott} with elementary proofs of the {\L}ojasiewicz gradient inequality for functions that are Morse--Bott (Theorem \ref{thm:Lojasiewicz_gradient_inequality_Morse-Bott}) or generalized Morse--Bott (Theorem \ref{thm:Lojasiewicz_gradient_inequality_generalized_Morse-Bott_function}). In Section \ref{sec:Lojasiewicz_gradient_inequality_critical_normal_crossing_divisor}, we establish the {\L}ojasiewicz gradient inequality (Theorem \ref{mainthm:Lojasiewicz_gradient_inequality_normal_crossing_divisor}) for $C^1$ functions with simple normal crossings. In Section \ref{sec:Resolution_singularities_and_Lojasiewicz_gradient_inequality}, we review the resolution of singularities for analytic varieties (Theorem \ref{thm:Monomialization_ideal_sheaf}) and apply that and Theorem \ref{mainthm:Lojasiewicz_gradient_inequality_normal_crossing_divisor} to prove the {\L}ojasiewicz gradient inequality for an arbitrary analytic function (Theorem \ref{mainthm:Lojasiewicz_gradient_inequality}). Finally, in Section \ref{sec:Lojasiewicz_distance_inequalities} we deduce Corollaries \ref{maincor:Lojasiewicz_distance_inequality}, \ref{maincor:Lojasiewicz_gradient-distance_inequality}, and \ref{maincor:Lojasiewicz_gradient-distance_inequality_analytic} from the gradient inequality \eqref{eq:Lojasiewicz_gradient_inequality}. Appendix \ref{sec:Resolution_singularities_cusp_curve} illustrates the application of resolution of singularities (Theorem \ref{thm:Monomialization_ideal_sheaf}) to achieve the required monomialization in the case of a simple example, namely the cusp curve. 

\subsection{Acknowledgments}
\label{subsec:Acknowledgments}
I thank Kenji Fukaya for his description of an example that alerted me to an error in an entirely different approach described in an early draft of this article. I am grateful to Lev Borisov, Tristan Collins, Antonella Grassi, Michael Greenblatt, Johan de Jong, Peter Kronheimer, Claude LeBrun, Tom Mrowka, Graeme Wilkin, and Jarek W{\l}odarczyk for helpful communications or discussions, to Carles Bivi{\`a}-Ausina, Santiago Encinas, Alain Haraux and Dennis Sullivan for many helpful comments and suggestions, to Manousos Maridakis for engaging conversations regarding {\L}ojasiewicz inequalities, to Toby Colding and Mark Goresky for their interest in this work, and to our librarian at Rutgers, Mei-Ling Lo, for locating difficult-to-find articles on my behalf. I am indebted to Tom\'{a}\v{s} B\'{a}rta for helpful discussions and especially for pointing out an example (see Remark \ref{defn:Generalized_Morse-Bott_function}) showing that a previous version of 
my Definition \ref{defn:Generalized_Morse-Bott_function} in an older version of this article was incomplete.
I am grateful to the National Science Foundation (grant DMS-1510064) for their support and to the Simons Center for Geometry and Physics, Stony Brook, the Dublin Institute for Advanced Studies, the Institut des Hautes {\'E}tudes Scientifiques, Bures-sur-Yvette, and the Institute for Advanced Studies, Princeton, for their hospitality and support during the preparation of this article. Lastly, I thank the anonymous referees for their careful reading of our manuscript and for their suggestions.

\section{{\L}ojasiewicz gradient inequalities for generalized Morse--Bott functions}
\label{sec:Lojasiewicz-Simon_gradient_inequality_Morse-Bott}
In this section, we adapt our previous proof in \cite{Feehan_lojasiewicz_inequality_ground_state} of the {\L}ojasiewicz inequalities for Morse--Bott functions on Banach spaces
\cite[Theorem 3]{Feehan_lojasiewicz_inequality_ground_state} (restated here as Theorem \ref{thm:Lojasiewicz_gradient_inequality_Morse-Bott} for the case of Euclidean spaces) to prove the {\L}ojasiewicz gradient inequality for generalized Morse--Bott functions, namely Theorem \ref{thm:Lojasiewicz_gradient_inequality_generalized_Morse-Bott_function}; our \cite[Theorem 3]{Feehan_lojasiewicz_inequality_ground_state} improves upon \cite[Theorems 3 and 4]{Feehan_Maridakis_Lojasiewicz-Simon_Banach} and has a simpler proof. Theorem \ref{thm:Lojasiewicz_gradient_inequality_Morse-Bott} was proved by Simon \cite[Lemma 3.13.1]{Simon_1996} (for a harmonic map energy function on a Banach space of $C^{2,\alpha}$ sections of a Riemannian vector bundle), Haraux and Jendoubi \cite[Theorem 2.1]{Haraux_Jendoubi_2007} (for functions on abstract Hilbert spaces), and in greater generality by Chill in \cite[Corollary 3.12]{Chill_2003} (for functions on abstract Banach spaces); a more elementary version was proved by Huang as \cite[Proposition 2.7.1]{Huang_2006} (for functions on abstract Banach spaces). These authors do not use Morse--Bott terminology but their hypotheses imply this condition --- directly in the case of Haraux and Jendoubi and Chill and by a remark due to Simon in \cite[p. 80] {Simon_1996} that his integrability condition \cite[Equation (iii), p. 79]{Simon_1996} is equivalent to a restatement of the Morse--Bott condition. See Feehan \cite{Feehan_lojasiewicz_inequality_all_dimensions_morse-bott} for further discussion of the relationship between definitions of integrability, such as those described by Adams and Simon \cite{Adams_Simon_1988}, and the Morse--Bott condition.

\subsection{Morse--Bott and generalized Morse--Bott functions}
\label{sec:Morse-Bott_function}
We begin with a well-known result.

\begin{thm}[{\L}ojasiewicz gradient inequality for a Morse--Bott function on Euclidean space]
\label{thm:Lojasiewicz_gradient_inequality_Morse-Bott}
Let $d \geq 1$ be an integer and $U \subset \KK^d$ an open subset. If $\sE:U \to \KK$ is a Morse--Bott function, then there are constants $C_0 \in (0,\infty)$ and $\sigma_0 \in (0,1]$ such that
\begin{equation}
\label{eq:Lojasiewicz-Simon_gradient_inequality_Morse-Bott}
\|\sE'(x)\|_{\KK^{d*}} \geq C_0|\sE(x) - \sE(x_\infty)|^{1/2},
\quad\text{for all } x \in B_{\sigma_0}(x_\infty).
\end{equation}
\end{thm}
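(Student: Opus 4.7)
The plan is to work in local coordinates adapted to the critical submanifold $\Crit\sE$ and then apply the Mean Value Theorem (or a second-order Taylor expansion with integral remainder) twice, once to control $\sE - \sE(x_\infty)$ from above and once to control $\sE'$ from below. Since $\Crit\sE$ is a $C^2$ submanifold of $U$ through $x_\infty$ of some dimension $k \in \{0,1,\ldots,d\}$, a $C^2$ tubular neighborhood construction provides a $C^2$ diffeomorphism $\Phi:W\to B_r(x_\infty)$ from an open neighborhood $W\subset \KK^k\times\KK^{d-k}$ of the origin sending $(0,0)\mapsto x_\infty$ and $W\cap(\KK^k\times\{0\})$ onto $B_r(x_\infty)\cap \Crit\sE$. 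Since $\Phi$ is bi-Lipschitz on a smaller neighborhood, it is enough to prove \eqref{eq:Lojasiewicz-Simon_gradient_inequality_Morse-Bott} for the pullback $F := \Phi^*\sE$ at the origin in $\KK^d$, because the difference between $\|F'\|$ and $\|\sE'\circ\Phi\|$ is controlled by the norms of $\Phi'$ and $(\Phi^{-1})'$ on this neighborhood.

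Writing coordinates as $x=(t,y)\in\KK^k\times\KK^{d-k}$, the critical set of $F$ contains $W\cap(\KK^k\times\{0\})$ by construction; conversely shrinking $W$ we may assume equality. Since $F$ is differentiable and its differential vanishes on the connected manifold $W\cap(\KK^k\times\{0\})$, we have $F(t,0)=F(0,0)=\sE(x_\infty)$ for all $(t,0)\in W$. The Morse--Bott hypothesis \ref{item:Morse-Bott_function_tangent_space_critical_set_equals_kernel_hessian} translates into the statement that the partial Hessian $H(t,y) := \partial_y\partial_y F(t,y)$, viewed as an element of $\Hom_\KK(\KK^{d-k},\KK^{(d-k)*})$, is invertible at $(0,0)$; by continuity of $H$ there is a constant $c_0>0$ such that $\|H(t,y)^{-1}\|\leq 1/c_0$ for all $(t,y)$ in some ball $B_{\sigma_0}\subset W$ of the origin in $\KK^d$.

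The next step is the two Taylor expansions. Because $F(t,0)=\sE(x_\infty)$ and $\partial_y F(t,0)=0$ for all $(t,0)\in W$, applying the integral form of Taylor's theorem in the $y$ variable alone gives
\begin{equation*}
F(t,y) - \sE(x_\infty) \;=\; \int_0^1 (1-s)\, H(t,sy)[y,y]\,ds,
\end{equation*}
which yields $|F(t,y)-\sE(x_\infty)| \leq C_1\|y\|^2$ for some $C_1>0$ and $(t,y)\in B_{\sigma_0}$. Similarly, from $\partial_y F(t,0)=0$,
\begin{equation*}
\partial_y F(t,y) \;=\; \int_0^1 H(t,sy)\,y\,ds,
\end{equation*}
and the lower bound $\|H(t,sy)^{-1}\|\leq 1/c_0$ together with a standard argument (e.g.\ factoring $\int_0^1 H(t,sy)\,ds = H(t,0) + O(\|y\|)$ and invoking invertibility of $H(t,0)$) yields $\|\partial_y F(t,y)\|_{\KK^{(d-k)*}} \geq (c_0/2)\|y\|$ after possibly shrinking $\sigma_0$. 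A fortiori $\|F'(t,y)\|_{\KK^{d*}}\geq (c_0/2)\|y\|$.

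Combining these two estimates produces
\begin{equation*}
\|F'(t,y)\|_{\KK^{d*}} \;\geq\; \frac{c_0}{2}\,\|y\| \;\geq\; \frac{c_0}{2\sqrt{C_1}}\, |F(t,y) - \sE(x_\infty)|^{1/2},
\end{equation*}
which, after transport through $\Phi$, gives \eqref{eq:Lojasiewicz-Simon_gradient_inequality_Morse-Bott}. The main obstacle I anticipate is the quantitative bookkeeping in the lower bound on $\|\partial_y F(t,y)\|$: one must confirm that the averaged Hessian $\int_0^1 H(t,sy)\,ds$ remains uniformly invertible on $B_{\sigma_0}$, and that the norm distortion under $\Phi$ does not spoil the inequality. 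Both can be arranged by shrinking $\sigma_0$ and absorbing Lipschitz constants into $C_0$, but must be made explicit.
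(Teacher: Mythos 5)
Your proof is correct and takes essentially the same approach as the paper's: straighten the critical submanifold by a $C^2$ diffeomorphism, then expand to second order in the normal directions using the integral form of Taylor's theorem, obtaining a lower bound $\|\partial_y F\|\gtrsim\|y\|$ from invertibility of the normal Hessian and an upper bound $|F-\sE(x_\infty)|\lesssim\|y\|^2$ from boundedness. The paper refers the reader to \cite[Theorem~3 and Section~4]{Feehan_lojasiewicz_inequality_ground_state} for this argument and carries out the analogous Taylor-with-integral-remainder computation in the proof of Theorem~\ref{thm:Lojasiewicz_gradient_inequality_generalized_Morse-Bott_function}, after the same reduction of the critical set to a coordinate subspace by a local diffeomorphism.
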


Theorem \ref{thm:Lojasiewicz_gradient_inequality_Morse-Bott} is a special case of Feehan \cite[Theorem 3]{Feehan_lojasiewicz_inequality_ground_state} and Feehan and Maridakis \cite[Theorems 3 and 4]{Feehan_Maridakis_Lojasiewicz-Simon_Banach}, where the case of a Morse--Bott function on a Banach space is considered. Even when $\sE$ is a Morse--Bott function on a Banach space, the proof of the corresponding {\L}ojasiewicz gradient inequality \cite[Theorem 3]{Feehan_lojasiewicz_inequality_ground_state} still readily follows from the Mean Value Theorem (see \cite[Section 4]{Feehan_lojasiewicz_inequality_ground_state}) in the presence of a few additional technical hypotheses specific to the infinite-dimensional setting. 

\begin{rmk}[On the proof of Theorem \ref{thm:Lojasiewicz_gradient_inequality_Morse-Bott}]
\label{rmk:Lojasiewicz_gradient_inequality_Morse-Bott_on_proof}
The conclusion of Theorem \ref{thm:Lojasiewicz_gradient_inequality_Morse-Bott} is a simple consequence of the Morse--Bott Lemma (see Banyaga and Hurtubise \cite[Theorem 2]{Banyaga_Hurtubise_2004}, Nicolaescu \cite[Proposition 2.42]{Nicolaescu_morse_theory}, or Feehan \cite{Feehan_lojasiewicz_inequality_all_dimensions_morse-bott}). However, the proof of the Morse--Bott Lemma itself (especially for Morse--Bott functions that are at most $C^2$) requires care. In contrast, our proof of Theorem \ref{thm:Lojasiewicz_gradient_inequality_Morse-Bott} --- given as the proof of \cite[Theorem 3]{Feehan_lojasiewicz_inequality_ground_state} in the infinite-dimensional case --- is direct and elementary and avoids appealing to the Morse--Bott Lemma.
\end{rmk}

\begin{defn}[Generalized Morse--Bott function]
\label{defn:Generalized_Morse-Bott_function}
Let $d\geq 1$ and $N \geq 2$ be integers, $U \subset \KK^d$ be an open subset, and $\sE:U\to\KK$ be a $C^N$ function. We call $\sE$ a \emph{generalized Morse--Bott function of order $N$ at a point $x_\infty \in \Crit\sE$} if
\begin{inparaenum}[(\itshape a\upshape)]
\item\label{item:Generalized_Morse-Bott_function_submanifold} $\Crit\sE$ is a $C^N$ submanifold of $U$, 
\item\label{item:Generalized_Morse-Bott_function_lower_order_zero} $\sE^{(n)}(x) = 0$ for all $x \in \Crit\sE$ and $1 \leq n \leq N-1$, and
\item\label{item:Generalized_Morse-Bott_function_coercive_N_order_derivative} $\sE^{(N)}(x_\infty)\xi^N \neq 0$ for all nonzero $\xi \in T_{x_\infty}^\perp\Crit\sE$, where $T_{x_\infty}^\perp\Crit\sE$ is the orthogonal complement of
  $T_{x_\infty}\Crit\sE$ in $\KK^d$.
\end{inparaenum}
\end{defn}

For example, if $N \geq 2$ and $f(x,y) = x^N$ then $f:\KK^2\to\KK$ is a generalized Morse--Bott function of order $N$. The analogous definition of a generalized Morse function is stated, for example, by Rothe \cite[Definition 2.6]{Rothe_1973} and Kuiper \cite[p. 202, Corollary]{Kuiper_1972}. While Definition \ref{defn:Generalized_Morse-Bott_function} is valid when $N=2$, the conditions are then more restrictive than those of Definition \ref{defn:Morse-Bott_function}.

\begin{thm}[{\L}ojasiewicz gradient inequality for a generalized Morse--Bott function on Euclidean space]
\label{thm:Lojasiewicz_gradient_inequality_generalized_Morse-Bott_function}
Let $d \geq 1$ and $N\geq 2$ be integers and $U \subset \KK^d$ be an open neighborhood. If $\sE:U \to \KK$ is a generalized Morse--Bott function of order $N$ at a point $x_\infty \in \Crit\sE$, then there are constants $C_0 \in (0,\infty)$ and $\sigma_0 \in (0,1]$ such that
\begin{equation}
\label{eq:Lojasiewicz_gradient_inequality_N-Morse-Bott_function}
\|\sE'(x)\|_{\KK^{d*}} \geq C_0|\sE(x) - \sE(x_\infty)|^{1-1/N}, \quad\text{for all } x \in B_{\sigma_0}(x_\infty).
\end{equation}
\end{thm}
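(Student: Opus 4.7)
The plan is to adapt the proof of Theorem \ref{thm:Lojasiewicz_gradient_inequality_Morse-Bott} by replacing the second-order Mean Value/Taylor expansion with an $N$-th order Taylor expansion at the nearest critical point. Since $\Crit\sE$ is a $C^N$ submanifold of $U$ through $x_\infty$, I would choose a tubular neighborhood $V \subset U$ on which the nearest-point projection $\pi\colon V \to \Crit\sE$ is well-defined and $C^1$. For $x \in V$ set $y := \pi(x)$ and $\xi := x - y \in T_y^\perp\Crit\sE$, so $\|\xi\|_{\KK^d} = \dist_{\KK^d}(x,\Crit\sE)$. After shrinking $V$, the point $y$ lies in the connected component of $x_\infty$ in $\Crit\sE \cap V$; since $\sE' \equiv 0$ on $\Crit\sE$ by hypothesis, $\sE$ is locally constant there, so $\sE(y) = \sE(x_\infty)$.

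Using condition (b) of Definition \ref{defn:Generalized_Morse-Bott_function}, Taylor's theorem at $y$ gives
\begin{align*}
\sE(x) - \sE(x_\infty) &= \tfrac{1}{N!}\sE^{(N)}(y)\xi^N + r_0(x), \\
\sE'(x) &= \tfrac{1}{(N-1)!}\sE^{(N)}(y)[\xi^{N-1},\,\cdot\,] + r_1(x),
\end{align*}
with $|r_0(x)| = o(\|\xi\|_{\KK^d}^N)$ and $\|r_1(x)\|_{\KK^{d*}} = o(\|\xi\|_{\KK^d}^{N-1})$ as $x \to x_\infty$, by continuity of $\sE^{(N)}$. The quantitative ingredient is uniform coercivity: condition (c), continuity of $\sE^{(N)}$, and the continuous variation of the normal fibers $T_y^\perp\Crit\sE$ (which holds because $\Crit\sE$ is $C^1$) together yield constants $c > 0$ and $\sigma_0 \in (0,1]$ such that
\[
|\sE^{(N)}(y)\eta^N| \geq c\|\eta\|_{\KK^d}^N, \quad \forall\, y \in \Crit\sE \cap B_{\sigma_0}(x_\infty),\ \eta \in T_y^\perp\Crit\sE.
\]

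Combining the first expansion with this coercivity bound, and shrinking $\sigma_0$ further to absorb the remainder, yields a two-sided estimate $c_1\|\xi\|_{\KK^d}^N \leq |\sE(x) - \sE(x_\infty)| \leq c_2\|\xi\|_{\KK^d}^N$, while pairing the second expansion against the unit vector $\xi/\|\xi\|_{\KK^d}$ and again absorbing the remainder gives
\[
\|\sE'(x)\|_{\KK^{d*}} \geq \frac{|\sE'(x)\xi|}{\|\xi\|_{\KK^d}} \geq c_3 \|\xi\|_{\KK^d}^{N-1}.
\]
Eliminating $\|\xi\|_{\KK^d}$ between these two estimates produces $\|\sE'(x)\|_{\KK^{d*}} \geq C_0|\sE(x) - \sE(x_\infty)|^{(N-1)/N}$, which is the desired inequality \eqref{eq:Lojasiewicz_gradient_inequality_N-Morse-Bott_function}.

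The main obstacle is the uniform coercivity estimate as $y$ ranges over $\Crit\sE \cap B_{\sigma_0}(x_\infty)$ rather than just at $y = x_\infty$; this reduces by compactness on the unit sphere bundle of the normal subbundle to verifying that the function $y \mapsto \inf\{|\sE^{(N)}(y)\eta^N| : \eta \in T_y^\perp\Crit\sE,\ \|\eta\|_{\KK^d} = 1\}$ is continuous in $y$ and strictly positive at $x_\infty$. A subsidiary technical point is that when $N$ is odd, hypothesis (c) forces $T_{x_\infty}^\perp\Crit\sE$ to be at most one-dimensional, since otherwise the continuous odd function $\eta \mapsto \sE^{(N)}(x_\infty)\eta^N$ would change sign on the connected unit sphere and hence vanish somewhere; once this dimensional observation is made, coercivity propagates to nearby $y$ by continuity in the standard way.
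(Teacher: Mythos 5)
Your proposal is correct and reaches the same conclusion via an $N$-th order Taylor expansion plus coercivity of $\sE^{(N)}$ on normal directions, but the technical setup differs from the paper's in an instructive way. The paper first applies a $C^N$ diffeomorphism to straighten $\Crit\sE$ into a coordinate subspace $K = \KK^c \oplus 0$, then Taylor-expands at the foot $\kappa \in K$ with the \emph{fixed} tensor $\sE^{(N)}(0)$ as leading coefficient, absorbing $\sE^{(N)}(\kappa+trv) - \sE^{(N)}(0)$ into the remainder by continuity; consequently it needs the coercivity bound $|\sE^{(N)}(0)v^N|\geq\zeta$ only at the single point $x_\infty = 0$, which is exactly what hypothesis (c) provides directly. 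You instead keep the curved critical set, use the nearest-point projection $y = \pi(x)$, and expand with the \emph{varying} tensor $\sE^{(N)}(y)$, which forces you to establish the uniform lower bound $|\sE^{(N)}(y)\eta^N| \geq c\|\eta\|^N$ for $\eta \in T_y^\perp\Crit\sE$ and all $y \in \Crit\sE$ near $x_\infty$. You correctly flag this as the extra ingredient; your sketch for it (continuity of $\sE^{(N)}$, continuous variation of the normal fibers since $\Crit\sE$ is $C^N$ with $N\geq 2$, and compactness of the unit normal sphere bundle over a small closed neighborhood) is sound. Both routes work: the paper trades your uniform-coercivity lemma for an initial coordinate change, which is marginally cleaner; your version avoids the diffeomorphism at the cost of that extra lemma and the regularity claim for the nearest-point projection. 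Your subsidiary remark that, over $\RR$ with $N$ odd, condition (c) forces $\dim T_{x_\infty}^\perp\Crit\sE \leq 1$ is a correct observation about the strength of the hypothesis, but it is not actually needed for the continuity-propagation step --- that step goes through regardless of the normal dimension or parity of $N$.
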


As Definition \ref{defn:Generalized_Morse-Bott_function} suggests, the proof of Theorem \ref{thm:Lojasiewicz_gradient_inequality_generalized_Morse-Bott_function} should generalize to the setting of functions on Banach spaces, as in  \cite[Theorem 3]{Feehan_lojasiewicz_inequality_ground_state} for the case of Morse--Bott functions.

\begin{rmk}[Comparison of Theorem \ref{thm:Lojasiewicz_gradient_inequality_Morse-Bott} and Theorem \ref{thm:Lojasiewicz_gradient_inequality_generalized_Morse-Bott_function} when $N=2$]
\label{rmk:Comparison_Morse-Bott_and_generalized_Morse-Bott_when_N_is_2}  
While Theorem \ref{thm:Lojasiewicz_gradient_inequality_generalized_Morse-Bott_function} holds when $N=2$, Theorem \ref{thm:Lojasiewicz_gradient_inequality_Morse-Bott} is a stronger result since condition \eqref{item:Morse-Bott_function_tangent_space_critical_set_equals_kernel_hessian} in Definition \ref{defn:Morse-Bott_function}, which is equivalent to the condition that $\sE''(x_\infty) \in \End_\KK(T_{x_\infty}^\perp\Crit\sE)$ be invertible, is weaker than the coercivity condition \eqref{item:Generalized_Morse-Bott_function_coercive_N_order_derivative} in Definition \ref{defn:Generalized_Morse-Bott_function}, namely, that $\sE''(x_\infty)\xi^2\neq 0$ for all non-zero $\xi \in T_{x_\infty}^\perp\Crit\sE$.
\end{rmk}

\begin{rmk}[On Definition \ref{defn:Generalized_Morse-Bott_function} and the hypotheses of Theorem \ref{thm:Lojasiewicz_gradient_inequality_generalized_Morse-Bott_function}]
\label{rmk:Barta}  
An example explained to me by Tom\'{a}\v{s} B\'{a}rta indicates the need for condition \eqref{item:Generalized_Morse-Bott_function_lower_order_zero} in Definition \ref{defn:Generalized_Morse-Bott_function} to hold for all $x \in \Crit\sE$ and not just at the point $x_\infty$ in order for the conclusion of Theorem \ref{thm:Lojasiewicz_gradient_inequality_generalized_Morse-Bott_function} to be valid: Choose $d=2$, $N=3$, and $\sE(x,y) = x^3 + x^2y^5$, so $\Crit\sE$ is the $y$-axis, and consider the gradient inequality at points $(-\frac{2}{3}y^5,y)$ in $\KK^2$. 
\end{rmk}

\begin{rmk}[Comparison of Theorem \ref{thm:Lojasiewicz_gradient_inequality_generalized_Morse-Bott_function} and a theorem due to Huang]
\label{rmk:Huang_theorem_2-4-3}  
Huang states a result \cite[Theorem 2.4.3]{Huang_2006} with a conclusion similar to that of Theorem \ref{thm:Lojasiewicz_gradient_inequality_generalized_Morse-Bott_function} (albeit in a Banach-space setting), but his hypotheses are quite different than those of Theorem \ref{thm:Lojasiewicz_gradient_inequality_generalized_Morse-Bott_function} and his result is better viewed as an extension of his \cite[Proposition 2.7.1]{Huang_2006}. On the one hand, our condition \eqref{item:Generalized_Morse-Bott_function_lower_order_zero} in Definition \ref{defn:Generalized_Morse-Bott_function} is replaced in \cite[Theorem 2.4.3]{Huang_2006} by his less restrictive condition that $\sE^{(n)}(x_\infty) = 0$ for $1 \leq n \leq N-1$; on the other hand, our condition \eqref{item:Generalized_Morse-Bott_function_coercive_N_order_derivative} in Definition \ref{defn:Generalized_Morse-Bott_function} is replaced in \cite[Theorem 2.4.3]{Huang_2006} by his condition that $\sE^{(N)}(x_\infty)v^N \neq 0$ for all nonzero $v \in \Ker\sE''(0)$. Our condition \eqref{item:Generalized_Morse-Bott_function_submanifold} that $\Crit\sE$ be a $C^N$ submanifold of $U$ is not assumed by Huang in his \cite[Theorem 2.4.3]{Huang_2006}. 
\end{rmk}

There are other extensions of the concept of a Morse--Bott function, notably that of Kirwan \cite{Kirwan_cohomology_quotients_symplectic_algebraic_geometry}; Holm and Karshon provide a version of her definition of a \emph{Morse--Bott--Kirwan function} in \cite[Definitions 2.1 and 2.3] {Holm_Karshon_2016} and explore its properties and applications to topology. However, it is unclear whether the relatively simple proofs of Theorems \ref{thm:Lojasiewicz_gradient_inequality_Morse-Bott} or \ref{thm:Lojasiewicz_gradient_inequality_generalized_Morse-Bott_function} would extend to include such Morse--Bott--Kirwan functions.

\subsection{{\L}ojasiewicz gradient inequalities for generalized Morse--Bott functions}
\label{subsec:Lojasiewicz_gradient_inequality_generalized_Morse-Bott_function}
The proof of Theorem \ref{thm:Lojasiewicz_gradient_inequality_Morse-Bott} is similar to (and also simpler than) that of Theorem \ref{thm:Lojasiewicz_gradient_inequality_generalized_Morse-Bott_function} and can be obtained in \cite{Feehan_lojasiewicz_inequality_ground_state}, so we shall confine our attention to the following proof.

\begin{proof}[Proof of Theorem \ref{thm:Lojasiewicz_gradient_inequality_generalized_Morse-Bott_function}]
We begin with several reductions that simplify the proof. First, observe that if $\sE_0:U\to\KK$ is defined by $\sE_0(x) := \sE(x+x_\infty)$, then $\sE_0'(0)=0$, so we may assume without loss of generality that $x_\infty=0$ and relabel $\sE_0$ as $\sE$. Second, let $K := T_{x_\infty}\Crit\sE \subset \KK^d$ and observe that by noting the invariance of the conditions in Definition \ref{defn:Generalized_Morse-Bott_function} under $C^N$ diffeomorphisms and applying a $C^N$ diffeomorphism to a neighborhood of the origin in $\KK^d$ and possibly shrinking $U$, we may assume without loss of generality that $U \cap \Crit\sE = U \cap K$, recalling that $\Crit\sE \subset U$ is a submanifold by the hypothesis that $\sE$ is generalized Morse--Bott of order $N$ at $x_\infty$. Third, observe that if $\sE_0:U\to\KK$ is defined by $\sE_0(x) := \sE(x) - \sE(0)$, then $\sE_0(0)=0$, so we may once again relabel $\sE_0$ as $\sE$ and assume without loss of generality that $\sE(0)=0$.

By the second reduction above, it suffices to consider the cases where
\begin{inparaenum}[\itshape i\upshape)]
\item $U\cap \Crit\sE = (\KK^c\oplus 0) \cap U$, for $d \geq 2$ and $1 \leq c \leq d-1$, or
\item $U\cap \Crit\sE = 0 \in \KK^d$, for $d \geq 1$ and $c=0$.
\end{inparaenum}
By shrinking the open subset $U\subset\KK^d$ if necessary, we may assume that $U$ is convex. Applying the Taylor Formula \cite[p. 349]{Lang_analysis} to a $C^M$ function $f:U\to\KK^k$ (for $k\geq 1$) and integer $M\geq 1$ gives
\begin{multline}
\label{eq:Taylor_formula}
f(x) = f(x_0) + f'(x_0)(x-x_0) + \cdots + \frac{f^{(M-1)}(x_0)}{(M-1)!}(x-x_0)^{M-1}
\\
+ \frac{1}{(M-1)!}\int_{0}^{1} (1-t)^{M-1}f^M(x_0+t(x-x_0))(x-x_0)^M\,dt,
\quad\text{for all } x, x_0 \in U.
\end{multline}
For
\begin{inparaenum}[\itshape i\upshape)]
\item $d\geq 2$ and $0 \leq c \leq d-2$, consider
  \[
    v \in S^{d-1-c} = \{x \in \KK^d: c = 0 \text{ or } x_i = 0 \text{ for } 1 \leq i \leq c \text{ and } x_{c+1}^2+\cdots+x_d^2=1\},
  \]
  and for
\item $d\geq 1$ and $c=d-1$, consider $v=1$.
\end{inparaenum}
If $\sE$ is constant in an open neighborhood of $0 \in \KK^d$, then \eqref{eq:Lojasiewicz_gradient_inequality_N-Morse-Bott_function} obviously holds, so we may assume without loss of generality that $\sE$ is non-constant in an open neighborhood of the origin.

By viewing $\sE^{(N)}(0)\in\Hom_\KK(\otimes^{N} \KK^d,\KK)=\otimes^{N} \KK^{d*}$ and recalling that $\sE$ is generalized Morse--Bott of order $N$, there is a positive constant $\zeta$ such that
\begin{equation}
\label{eq:Lower_upper_bounds_order_N_derivative_E}
|\sE^{(N)}(0)v^{N}| \geq \zeta, \quad\text{for all } v \in S^{d-1-c}.
\end{equation}
By viewing $\sE^{(N)}(0)\in\Hom_\KK(\otimes^{N-1} \KK^d,\KK^{d*})$, we note that
\begin{equation}
\label{eq:Dual_space_norm_lower_bound}
\|\sE^{(N)}(0)v^{N-1}\|_{\KK^{d*}} = \max_{w\in S^{d-1-c}}|\sE^{(N)}(0)v^{N-1}w| \geq |\sE^{(N)}(0)v^{N}|, \quad\text{for all } v \in S^{d-1-c}.
\end{equation}
The lower bounds in \eqref{eq:Lower_upper_bounds_order_N_derivative_E} ensure that
\begin{equation}
\label{eq:Lower_upper_bounds_order_N-1_derivative_differetial_E}
\|\sE^{(N)}(0)v^{N-1}\|_{\KK^{d*}} \geq \zeta, \quad\text{for all } v \in S^{d-1-c}.
\end{equation}
Choose small enough positive constants $R$ and $L$ so that the closure of the cylinder $C(R,L) := \{\kappa+rv \in \KK^d: \kappa \in K \text{ with } \|\kappa\|_{\KK^d} < L \text{ and } r \in [0,R) \text{ and } v \in S^{d-1-c}\}$ is contained in $U$. Because $\sE^{(n)}(\kappa) = 0$ for $n=1,\ldots,N-1$ and all $\kappa \in U\cap K$, the Taylor Formula \eqref{eq:Taylor_formula} applied to $f(x)=\sE(x)$ with $k=1$ and $M=N$ and $x_0 = \kappa \in B_L \cap K$ and $x=\kappa+rv \in C(R,L)$ gives
\[
  \sE(\kappa+rv) = \frac{r^N}{(N-1)!}\int_{0}^{1} (1-t)^{N-1}\sE^N(\kappa+trv)v^N\,dt,
\]
or equivalently,
\begin{equation}
\label{eq:Taylor_series}
\sE(\kappa+rv) = \frac{r^N}{N!}\sE^N(0)v^N + \frac{r^N}{(N-1)!}\int_{0}^{1} (1-t)^{N-1}\left(\sE^N(\kappa+trv)-\sE^N(0)\right)v^N\,dt.
\end{equation}
Since $\sE$ is $C^N$, we may choose $R,L \in (0,1]$ small enough that
\begin{equation}
\label{eq:Remainder_Taylor_series}
\sup_{\begin{subarray}{c}s\in[0,R), \\ \|\kappa\|_{\KK^d} < L\end{subarray}}
\left|\left(\sE^N(\kappa+sv)-\sE^N(0)\right)v^N\right|
\leq
|\sE^{(N)}(0)v^{N}|, \quad\text{for all } v \in S^{d-1-c}.
\end{equation}
Therefore, by \eqref{eq:Taylor_series} and \eqref{eq:Remainder_Taylor_series} we obtain
\begin{equation}
\label{eq:Function_raw_upper_bound}
\frac{2r^{N}}{N!}|\sE^{(N)}(0)v^{N}| \geq |\sE(\kappa+rv)|,
\quad\text{for all } v \in S^{d-1-c} \text{ and } r \in [0,R) \text{ and } \kappa \in B_L\cap K.
\end{equation}
As $\sE^{(n)}(\kappa)v^n = 0$ for $n=1,\ldots,N-1$ and all $\kappa \in U\cap K$ and $v \in S^{d-1-c}$, the Taylor Formula \eqref{eq:Taylor_formula} applied to $f(x) = \sE'(x)$ with $k=d$ and $M=N-1$ and $x=\kappa+rv$ and $x_0=\kappa$ yields
\[
\sE'(\kappa+rv) = \frac{r^{N-1}}{(N-2)!}\int_{0}^{1} (1-t)^{N-2}\sE^N(\kappa+trv)v^{N-1}\,dt,
\]
or equivalently,
\begin{multline}
\label{eq:Gradient_is_order_Nminus1_remainder_Taylor_formula}
\sE'(\kappa+rv) = \frac{\sE^{(N)}(0)v^{N-1}}{(N-1)!}r^{N-1}
\\
+ \frac{r^{N-1}}{(N-2)!}\int_{0}^{1} (1-t)^{N-2}\left(\sE^N(\kappa+trv) - \sE^{(N)}(0)\right)v^{N-1}\,dt,
\\
\quad\text{for all } v \in S^{d-1-c} \text{ and } r \in [0,R) \text{ and } \kappa \in B_L\cap K.
\end{multline}
Since $\sE$ is $C^N$, we may choose $R,L \in (0,1]$ small enough that
\begin{equation}
\label{eq:Size_sigma_N}
\sup_{\begin{subarray}{c}s\in[0,R), \\ \|\kappa\|_{\KK^d} < L\end{subarray}}\left\|\left(\sE^N(\kappa+sv) - \sE^{(N)}(0)\right)v^{N-1}\right\|_{\KK^{d*}}
\leq \frac{1}{2}\|\sE^{(N)}(0)v^{N-1}\|_{\KK^{d*}},
\quad\text{for all } v \in S^{d-1-c}.
\end{equation}
Therefore, by \eqref{eq:Gradient_is_order_Nminus1_remainder_Taylor_formula} and \eqref{eq:Size_sigma_N},
\begin{multline}
\label{eq:Derivative_raw_lower_bound}
\|\sE'(\kappa+rv)\|_{\KK^{d*}} \geq \frac{r^{N-1}}{2(N-1)!} \|\sE^{(N)}(0)v^{N-1}\|_{\KK^{d*}},
\\
\text{for all } v \in S^{d-1-c} \text{ and } r \in [0,R) \text{ and } \kappa \in B_L\cap K.
\end{multline}
We compute that, for all $v \in S^{d-1-c}$ and $r \in [0,R)$ and $\kappa \in B_L\cap K$,
\begin{align*}
\|\sE'(\kappa+rv)\|_{\KK^{d*}} &\geq \frac{r^{N-1}}{2(N-1)!} \|\sE^{(N)}(0)v^{N-1}\|_{\KK^{d*}}
\quad\text{(by \eqref{eq:Derivative_raw_lower_bound})}
\\
&= \frac{N}{4}\frac{2}{N!} \|\sE^{(N)}(0)v^{N-1}\|_{\KK^{d*}} \left(\frac{2}{N!}\|\sE^{(N)}(0)v^{N-1}\|_{\KK^{d*}}\right)^{-(N-1)/N}
\\
&\quad \times
\left(\frac{2r^{N}}{N!}\|\sE^{(N)}(0)v^{N-1}\|_{\KK^{d*}}\right)^{(N-1)/N}
\\
&= \frac{N}{4}\left(\frac{2}{N!}\|\sE^{(N)}(0)v^{N-1}\|_{\KK^{d*}}\right)^{1/N}
\left(\frac{2r^{N}}{N!}\|\sE^{(N)}(0)v^{N-1}\|_{\KK^{d*}}\right)^{(N-1)/N}
\\
&\geq \frac{N}{4}\left(\frac{2}{N!}\|\sE^{(N)}(0)v^{N-1}\|_{\KK^{d*}}\right)^{1/N}
\left(\frac{2r^{N}}{N!}|\sE^{(N)}(0)v^{N}|\right)^{(N-1)/N}
\quad\text{(by \eqref{eq:Dual_space_norm_lower_bound})}
\\
&\geq \frac{N}{4}\left(\frac{2}{N!}\|\sE^{(N)}(0)v^{N-1}\|_{\KK^{d*}}\right)^{1/N} |\sE(\kappa+rv)|^{(N-1)/N} \quad\text{(by \eqref{eq:Function_raw_upper_bound})}.
\end{align*}
This yields \eqref{eq:Lojasiewicz_gradient_inequality_N-Morse-Bott_function} with $\theta=(N-1)/N \in [1/2,1)$, for all $v \in S^{d-1-c}$ and $\kappa \in B_L\cap K$, and
\begin{equation}
\label{eq:Definition_c}
C := \frac{N}{4}\inf_{v \in S^{d-1-c}} \left(\frac{2}{N!}\|\sE^{(N)}(0)v^{N-1}\|_{\KK^{d*}}\right)^{1/N}
\geq
\frac{N}{4}\left(\frac{2\zeta}{N!}\right)^{1/N},
\end{equation}
where we apply the lower bound \eqref{eq:Lower_upper_bounds_order_N-1_derivative_differetial_E} to obtain the inequality in \eqref{eq:Definition_c}. By the reductions described earlier, this completes the proof of Theorem \ref{thm:Lojasiewicz_gradient_inequality_generalized_Morse-Bott_function}.
\end{proof}

\section{{\L}ojasiewicz gradient inequality for $C^1$ functions with simple normal crossings}
\label{sec:Lojasiewicz_gradient_inequality_critical_normal_crossing_divisor}
In this section, we prove Theorem \ref{mainthm:Lojasiewicz_gradient_inequality_normal_crossing_divisor} using a simple, coordinate-based alternative to an argument due to Bierstone and Milman of their more general \cite[Theorem 2.7]{Bierstone_Milman_1997}.

\begin{proof}[Proof of Theorem \ref{mainthm:Lojasiewicz_gradient_inequality_normal_crossing_divisor}]
By hypothesis, the function $\sE:U\to\KK$ has simple normal crossings in the sense of Definition \ref{defn:C1_function_normal_crossings} and $\sE(0)=0$. Therefore,
\begin{equation}
\label{eq:Energy_function_normal_crossings}
\sE(x) = \sF(x)\prod_{i=1}^{c}x_i^{n_i}, \quad\text{for all } x \in U,
\end{equation}
for integers $c \geq 1$ with $c \leq d$ and $n_i \geq 1$ and a $C^1$ function $\sF:U\to\KK$ with $\sF(x)\neq 0$ for all $x \in U$.\footnote{By making a further coordinate change, one could assume that $\sF=1$ without loss of generality but we shall omit that step.} Hence, if $\{e_i\}_{i=1}^{d}$ and $\{e_i^*\}_{i=1}^{d}$ denote the standard basis and dual basis, respectively, for $\KK^d$ and $\KK^{d*}$, then the differential of $\sE$ is given by
\begin{align*}
\sE'(x) &= \sum_{j=1}^{d}\sE_{x_j}(x)e_j^*
\\
&= \sum_{j=1}^{c}( x_j^{n_j}\sF_{x_j}(x) + n_j x_j^{n_j-1}\sF(x)) \prod_{\begin{subarray}{c}i=1\\i\neq j\end{subarray}}^{c}x_i^{n_i}\,e_j^* + \prod_{i=1}^{c}x_i^{n_i}\sum_{j=c+1}^{d}\sF_{x_j}(x)e_j^*,
\end{align*}
that is,
\begin{equation}
\label{eq:Gradient_function_normal_crossings}
\sE'(x) = \prod_{i=1}^{c}x_i^{n_i} \sum_{j=1}^{c}(x_j\sF_{x_j}(x) + n_j\sF(x))x_j^{-1} e_j^* + \prod_{i=1}^{c}x_i^{n_i}\sum_{j=c+1}^{d}\sF_{x_j}(x)e_j^*,
\quad\text{for all } x \in U,
\end{equation}
where the sum over $j=c+1,\ldots,d$ is omitted if $c=d$. Observe that
\begin{equation}
\label{eq:Norm_squared_gradient_function_normal_crossings}
\|\sE'(x)\|_{\KK^{d*}}^2
\geq \prod_{i=1}^{c}x_i^{2n_i} \sum_{j=1}^{c}(x_j\sF_{x_j}(x) + n_j\sF(x))^2 x_j^{-2}, \quad\text{for all } x \in U.
\end{equation}
Because $\sF(0)\neq 0$ and $\sF$ is $C^1$, there is a constant $\sigma \in (0,1]$ such that $B_\sigma \Subset U$ and
\[
|x_j\sF_{x_j}(x)| \leq \frac{n_j}{2}|\sF(x)|, \quad\text{for all } x \in B_\sigma \text{ and } j =1,\ldots,c,
\]
and thus
\[
|x_j\sF_{x_j}(x) + n_j\sF(x)| \geq \frac{n_j}{2}|\sF(x)|, \quad\text{for all } x \in B_\sigma \text{ and } j =1,\ldots,c.
\]
Hence \eqref{eq:Norm_squared_gradient_function_normal_crossings}, noting that $n_j\geq 1$ for $j =1,\ldots,c$, yields the lower bound
\begin{equation}
\label{eq:Gradient_squared_lower_bound}
\|\sE'(x)\|_{\KK^{d*}}^2 \geq  \frac{\sF(x)^2}{4} \prod_{i=1}^{c}x_i^{2n_i}\sum_{j=1}^{c}x_j^{-2},
\quad\text{for all } x \in B_\sigma.
\end{equation}
On the other hand, \eqref{eq:Energy_function_normal_crossings} gives
\begin{equation}
\label{eq:Function_squared}
\sE(x)^2 = \sF(x)^2 \prod_{i=1}^{c}x_i^{2n_i}, \quad\text{for all } x \in U.
\end{equation}
Define
\begin{equation}
\label{eq:Max_min_unit}
m := \inf_{x\in B_\sigma}|\sF(x)| > 0 \quad\text{and}\quad M := \sup_{x\in B_\sigma}|\sF(x)| < \infty.
\end{equation}
Because $\sE'(0)=0$, we must have $c\geq 2$ or $c=1$ and $n_1\geq 2$ by examining the expression \eqref{eq:Gradient_function_normal_crossings} for $\sE'(x)$ when $x=0$. If $c=1$, then $n_1\geq 2$ and inequalities \eqref{eq:Gradient_squared_lower_bound}, \eqref{eq:Function_squared}, and \eqref{eq:Max_min_unit} give
\[
\|\sE'(x)\|_{\KK^{d*}} \geq \frac{1}{2m}|x_1|^{n_1-1} \quad\text{and}\quad |\sE(x)| \leq M|x_1|^{n_1}, \quad\text{for all } x \in B_\sigma.
\]
Combining these inequalities yields
\[
\|\sE'(x)\|_{\KK^{d*}} \geq \frac{m}{2M^{(n_1-1)/n_1}}|\sE(x)|^{(n_1-1)/n_1}, \quad\text{for all } x \in B_\sigma,
\]
and hence we obtain \eqref{eq:Lojasiewicz_gradient_inequality_normal_crossing_divisor} with $\theta = 1-1/n_1$ and $C_0 = m/(2M^\theta)$ if $c=1$.

For the remainder of the proof, we assume $c\geq 2$ and recall the \emph{Generalized Young Inequality},
\begin{equation}
\label{eq:Generalized_Young_inequality}
\left(\prod_{j=1}^{c}a_j\right)^r \leq r\sum_{j=1}^{c} \frac{a_j^{p_j}}{p_j},
\end{equation}
for constants $a_j>0$ and $p_j>0$ and $r>0$ such that $\sum_{j=1}^{c} 1/p_j = 1/r$ (see Remark \ref{rmk:Generalized_Young_inequality}). For
\[
N := \sum_{j=1}^{c}n_j,
\]
we observe that the inequality,
\begin{equation}
\label{eq:Haraux_2005_3-4}
\prod_{j=1}^{c} x^{-2n_j/N} \leq \frac{1}{N}\sum_{j=1}^{c}n_jx_j^{-2},
\quad\text{for } x_j\neq 0 \text{ with } j = 1, \ldots, c,
\end{equation}
follows from \eqref{eq:Generalized_Young_inequality} by substituting $r=1$ and $a_j=x_j^{-2n_j/N}$ (with $x_j\neq 0$) and $p_j = N/n_j$ for $j=1,\ldots,c$ in \eqref{eq:Generalized_Young_inequality}. Setting
\[
n := \max_{1\leq j\leq c} n_j  \quad\text{and}\quad \theta := 1-1/N \in [1/2,1)
\]
and applying \eqref{eq:Haraux_2005_3-4} yields
\[
\prod_{i=1}^{c}x_i^{2n_i}\sum_{j=1}^{c}x_j^{-2}
\geq
\frac{N}{n}\prod_{i=1}^{c}x_i^{2n_i(1-1/N)},
\]
that is,
\begin{equation}
\label{eq:Monomial_gradient_inequality}
\prod_{i=1}^{c}x_i^{2n_i}\sum_{j=1}^{c}x_j^{-2}
\geq
\frac{N}{n}\left(\prod_{i=1}^{c}x_i^{2n_i}\right)^\theta,
\quad\text{for all } x \in \KK^c.
\end{equation}
We now combine inequalities \eqref{eq:Gradient_squared_lower_bound}, \eqref{eq:Function_squared}, \eqref{eq:Max_min_unit}, and \eqref{eq:Monomial_gradient_inequality} to give
\[
\|\sE'(x)\|_{\KK^{d*}}^2
\geq
\frac{m^2N}{4n}\left(\prod_{i=1}^{c}x_i^{2n_i}\right)^\theta
\quad\text{and}\quad
\sE(x)^{2\theta}
\leq
M^{2\theta}\left(\prod_{i=1}^{c}x_i^{2n_i}\right)^\theta,
\quad\text{for all } x \in B_\sigma.
\]
Taking square roots and combining the preceding two inequalities yields \eqref{eq:Lojasiewicz_gradient_inequality_normal_crossing_divisor} with constant $C_0 = m\sqrt{N/n}/(2M^\theta)$ if $c\geq 2$. This completes the proof of Theorem \ref{mainthm:Lojasiewicz_gradient_inequality_normal_crossing_divisor}.
\end{proof}

\begin{rmk}[Generalized Young Inequality]
\label{rmk:Generalized_Young_inequality}
The inequality \eqref{eq:Generalized_Young_inequality} may be deduced from Hardy, Littlewood, and P{\'o}lya \cite[Inequality (2.5.2)]{HardyLittlewoodPolya},
\begin{equation}
\label{eq:Hardy_Littlewood_Polya_2-5-2}
\prod_{i=1}^{c}b_i^{q_i} \leq \sum_{i=1}^{c}q_i b_i,
\end{equation}
where $b_i > 0$ and $c\geq 1$ and $q_i>0$ and $\sum_{i=1}^{c}q_i=1$. Indeed, set $a_i = b_i^{q_i/r}$, so $b_i = a_i^{r/q_i}$, and $p_i=r/q_i$ to give
\[
\prod_{i=1}^{c}a_i^r \leq \sum_{i=1}^{c}q_i a_i^{p_i}.
\]
But $q_i = r/p_i$ and thus
\[
\left(\prod_{i=1}^{c}a_i\right)^r \leq r\sum_{i=1}^{c} \frac{1}{p_i} a_i^{p_i},
\]
which is \eqref{eq:Generalized_Young_inequality}; see also \cite[Section 8.3]{HardyLittlewoodPolya}. The inequality \eqref{eq:Generalized_Young_inequality} is proved directly by Haraux as \cite[Lemma 3.2]{Haraux_2005} by using concavity of the logarithm function on $(0,\infty)$.
\end{rmk}

\section{Resolution of singularities and application to the {\L}ojasiewicz gradient inequality}
\label{sec:Resolution_singularities_and_Lojasiewicz_gradient_inequality}
We begin in Sections \ref{subsec:Normal_crossing_divisors} and \ref{subsec:Ideals_smiple_normal_crossing_divisors} by recalling the definitions of divisors and ideals, respectively, with simple normal crossings. In Section \ref{subsec:Resolution_singularities}, we recall a statement of resolution of singularities for analytic varieties and in Section \ref{subsec:Application_Lojasiewicz_gradient_inequality}, we apply that to prove Theorem \ref{mainthm:Lojasiewicz_gradient_inequality} as a corollary of Theorem \ref{mainthm:Lojasiewicz_gradient_inequality_normal_crossing_divisor}. Unless stated otherwise, `analytic' may refer to real or complex analytic in this section.

\subsection{Divisors with simple normal crossings}
\label{subsec:Normal_crossing_divisors}
For basic methods of and notions in algebraic geometry --- including blowing up, divisors, and morphisms --- we refer to Griffiths and Harris \cite{GriffithsHarris}, Hartshorne \cite{Hartshorne_algebraic_geometry}, and Shafarevich \cite{Shafarevich_v1, Shafarevich_v2}. For terminology regarding real analytic varieties, we refer to Guaraldo, Macr\`\i, and Tancredi \cite{Guaraldo_Macri_Tancredi_topics_real_analytic_spaces}; see also Griffiths and Harris \cite{GriffithsHarris} and Grauert and Remmert \cite{Grauert_Remmert_analytic_spaces} for complex analytic varieties.

Following Griffiths and Harris \cite[pp. 12--14, pp. 20--22, and pp. 129--130]{GriffithsHarris} (who consider \emph{complex} analytic subvarieties of smooth \emph{complex} manifolds), let $M$ be a (real or complex) analytic (not necessarily compact) manifold of dimension $d \geq 1$ and $V \subset M$ be an \emph{analytic subvariety}, that is, for each point $p \in V$, there are an open neighborhood $U \subset M$ of $p$ and a finite collection, $\{f_1,\ldots,f_k\}$ (where $k$ may depend on $p$), of analytic functions on $U$ such that $V\cap U = f_1^{-1}(0)\cap \cdots \cap f_k^{-1}(0)$. One calls $p$ a \emph{smooth point of $V$} if $V\cap U$ is cut out transversely by $\{f_1,\ldots,f_k\}$, that is, if the $k\times d$ matrix $(\partial f_i/\partial x_j)(p)$ has rank $k$, in which case (possibly after shrinking $U$), we have that $V\cap U$ is an analytic (smooth) submanifold of codimension $k$ in $U$. An analytic subvariety $V \subset M$ is called \emph{irreducible} if $V$ cannot be written as the union of two analytic subvarieties, $V_1, V_2 \subset M$, with $V_i \neq V$ for $i=1,2$.

One calls $V \subset M$ an \emph{analytic subvariety of dimension $d-1$} if $V$ is a \emph{analytic hypersurface}, that is, for any point $p \in V$, then $U\cap V = f^{-1}(0)$, for some open neighborhood, $U \subset M$ of $p$, and some analytic function, $f$, on $U$ \cite[p. 20]{GriffithsHarris}. We then recall the

\begin{defn}[Divisor on an analytic manifold]
(See \cite[p. 130]{GriffithsHarris}.)
A \emph{divisor} $D$ on an analytic manifold $M$ is a locally finite, formal linear combination,
\[
D = \sum_{i} a_iV_i,
\]
of irreducible, analytic hypersurfaces of $M$, where $a_i \in \ZZ$.
\end{defn}

We can now state the

\begin{defn}[Simple normal crossing divisor]
\label{defn:Subvariety_having simple_normal_crossings_with_divisor}
(See Koll{\'a}r \cite[Definition 3.24]{Kollar_lectures_resolution_singularities}.)
Let $X$ be a smooth algebraic variety of dimension $d\geq 1$. One says that $E = \sum_i E_i$ is a \emph{simple normal crossing divisor} on $X$ if each $E_i$ is smooth and for each point $p \in X$ one can choose local coordinates $x_1,\ldots,x_d$ in the maximal ideal $\fm_p$ of the local ring, $\sO_p$, of regular functions defined on some open neighborhood $U$ of $p\in X$ such that for each $i$ the following hold:
\begin{enumerate}
\item Either $p \notin E_i$ or $E_i\cap U = \{q \in U: x_{j_i}(q) = 0\}$ in an open neighborhood $U \subset X$ of $p$ for some $j_i$, and

\item $j_i \neq j_{i'}$ if $i \neq i'$.
\end{enumerate}
A subvariety $Z \subset X$ has \emph{simple normal crossings} with $E$ if one can choose $x_1,\ldots,x_d$ as above such that in addition
\begin{enumerate}
\setcounter{enumi}{3}
\item $Z = \{q \in U: x_{j_1}(q) = \cdots = x_{j_s}(q) = 0\}$ for some $j_1,\ldots,j_s$.
\end{enumerate}
In particular, $Z$ is smooth, and some of the $E_i$ are allowed to contain $Z$.
\end{defn}

Koll{\'a}r also gives the following, more elementary definition that serves, in part, to help compare the concepts of simple normal crossing divisor (as used by \cite{Kollar_1999, Wlodarczyk_2008}) and normal crossing divisor (as used by \cite{Bierstone_Milman_1997}), in the context of resolution of singularities.

\begin{defn}[Simple normal crossing divisor]
\label{defn:Simple_normal_crossing_divisor}
(See Koll{\'a}r \cite[Definition 1.44]{Kollar_lectures_resolution_singularities}.) Let $X$ be a smooth algebraic variety of dimension $d\geq 1$ and $E \subset X$ a divisor. One calls $E$ a \emph{simple normal crossing divisor} if every irreducible component of $E$ is smooth and all intersections are transverse. That is, for every point $p \in E$ we can choose local coordinates $x_1,\ldots,x_d$ on an open neighborhood $U \subset X$ of $p$ and $m_i \in \ZZ\cap [0,\infty)$ for $i=1,\ldots,d$ such that $U\cap E = \{q \in U: \prod_{i=1}^d x^{m_i}(q) = 0 \}$.
\end{defn}

\begin{rmk}[Normal crossing divisor]
\label{rmk:Normal_crossing_divisor}
(See Koll{\'a}r \cite[Remark 1.45]{Kollar_lectures_resolution_singularities}.)
Continuing the notation of Definition \ref{defn:Simple_normal_crossing_divisor},
one calls $E$ a \emph{normal crossing divisor} if for every $p \in E$ there are local \emph{analytic} or formal coordinates, $x_1,\ldots,x_d$, and natural numbers $m_1,\ldots,m_d$ such that $U\cap E = \{q \in U: \prod_{i=1}^d x^{m_i}(q) = 0 \}$.
\end{rmk}

Definitions \ref{defn:Subvariety_having simple_normal_crossings_with_divisor} and \ref{defn:Simple_normal_crossing_divisor} extend to the categories of analytic varieties, where $\sO_p$ is then the local ring of analytic functions; see, for example, Koll{\'a}r \cite[Section 3.44]{Kollar_lectures_resolution_singularities}. In the category of analytic varieties, Remark \ref{rmk:Normal_crossing_divisor} implies that the concepts of simple normal crossing divisor and normal crossing divisor coincide.\footnote{I am grateful to Jaros{\l}aw W{\l}odarczyk for clarifying this point.} Definitions of simple normal crossing divisors are also provided by Cutkowsky \cite[Exercise 3.13 (2)]{Cutkosky_resolution_singularities}, Hartshorne \cite[Remark 3.8.1]{Hartshorne_algebraic_geometry} and Lazarsfeld \cite[Definition 4.1.1]{Lazarsfeld_positivity_algebraic_geometry_v1}.

\subsection{Ideals with simple normal crossings}
\label{subsec:Ideals_smiple_normal_crossing_divisors}
For our application to the proof of the gradient inequality, we shall need to more generally consider ideals with simple normal crossings and the corresponding statement of resolution of singularities. We review the concepts that we shall require for this purpose. For the theory of ringed spaces, sheaf theory, analytic spaces, and analytic manifolds we refer to Grauert and Remmert \cite{Grauert_Remmert_coherent_analytic_sheaves}, Griffiths and Harris \cite{GriffithsHarris}, and Narasimhan \cite{Narasimhan_introduction_theory_analytic_spaces} in the complex analytic category and Guaraldo, Macr\`\i, and Tancredi \cite{Guaraldo_Macri_Tancredi_topics_real_analytic_spaces} in the real analytic category; see also Hironaka et al. \cite{Aroca_Hironaka_Vicente_theory_maximal_contact, Aroca_Hironaka_Vicente_desingularization_theorems, Hironaka_infinitely_near_singular_points}. If $X$ is an analytic manifold, then $\sO_X$ is the sheaf of analytic functions on $X$. An ideal $\sI \subset \sO_X$ is \emph{locally finite} if for every point $p \in X$, there are an open neighborhood $U \subset X$ and a finite set of analytic functions $\{f_1,\ldots,f_k\} \subset \sO_U$ such that
\[
\sI = f_1\sO_U + \cdots + f_k\sO_U,
\]
and $\sI$ is \emph{locally principal} if $k=1$ for each point $p \in X$.

If $p \in X$, then $\sO_p$ is the ring of (germs of) analytic functions defined on some open neighborhood of $p$. The quotient sheaf $\sO_X/\sI$ is a sheaf of rings on $X$ and its \emph{support}
\[
Z: = \supp(\sO_X/\sI)
\]
is the set of all points $p \in X$ where $(\sO_X/\sI)_p \neq 0$, that is, where $\sI_p \neq \sO_p$. In an open neighborhood $U$ of $p$ one has
\[
Z\cap U = f_1^{-1}(0)\cap \cdots \cap f_k^{-1}(0),
\]
so locally $Z$ is the zero set of finitely many analytic functions.

In order to state the version of resolution of singularities that we shall need, we recall some definitions from Cutkosky \cite[pp. 40--41]{Cutkosky_resolution_singularities} and
Koll{\'a}r \cite[Note on Terminology 3.16]{Kollar_lectures_resolution_singularities}, given here in the real or complex analytic category, rather than the algebraic category, for consistency with our application. Suppose that $X$ is a non-singular variety and $\sI \subset \sO_X$ is an ideal sheaf; a \emph{principalization of the ideal} $\sI$ is a proper birational morphism $\pi : \widetilde{X} \to X$ such that $\widetilde{X}$ is non-singular and
\[
\pi^*\sI \subset \sO_{\widetilde{X}}
\]
is a locally principal ideal. If $X$ is a non-singular variety of dimension $d$ and $\sI \subset \sO_X$ is a locally principal ideal, then one says that $\sI$ has \emph{simple normal crossings} (or is \emph{monomial}) \emph{at a point} $p \in X$ if there exist local coordinates $\{x_1,\ldots,x_d\} \subset \sO_p$ such that
\[
\sI_p = x_1^{m_1}\cdots x_n^{m_d}\sO_p,
\]
for some $m_i \in \ZZ\cap [0,\infty)$ with $i=1,\ldots,d$. One says that $\sI$ is \emph{locally monomial} if it is monomial at every point $p \in X$ or, equivalently, if it is the ideal sheaf of a \emph{simple normal crossing divisor} in the sense of Definition \ref{defn:Subvariety_having simple_normal_crossings_with_divisor}.

Suppose that $D$ is an effective divisor on a non-singular variety $X$ of dimension $n$, so $D = m_1E_1 + \cdots + m_dE_d$, where $E_i$ are irreducible, codimension-one subvarieties of $X$, and $m_i \in \ZZ\cap [0,\infty)$ with $i=1,\ldots,d$. One says that $D$ has \emph{simple normal crossings} if
\[
\sI_D = \sI_{E_1}^{m_1}\cdots\sI_{E_d}^{m_d}
\]
has \emph{simple normal crossings}.

\subsection{Resolution of singularities}
\label{subsec:Resolution_singularities}
We recall from Cutkosky \cite[pp. 40--41]{Cutkosky_resolution_singularities} that a \emph{resolution of singularities} of an algebraic or analytic variety $X$ is a proper birational morphism $\pi : \widetilde{X} \to X$ such that $\widetilde{X}$ is non-singular. Hironaka \cite{Hironaka_1964-I-II} proved that any algebraic variety over any field of characteristic zero admits a resolution of singularities and, moreover, that both complex and real analytic varieties admit resolutions of singularities as well \cite{Aroca_Hironaka_Vicente_theory_maximal_contact, Aroca_Hironaka_Vicente_desingularization_theorems, Hironaka_infinitely_near_singular_points}. Bierstone and Milman \cite{Bierstone_Milman_1997} (see \cite{Bierstone_Milman_1999} for their expository introduction to \cite{Bierstone_Milman_1997}) have developed a proof of resolution of singularities that applies to real and complex analytic varieties and to algebraic varieties over any field of characteristic zero and which significantly shortens and simplifies Hironaka's proof. Additional references for resolution of singularities include Cutkowsky \cite{Cutkosky_resolution_singularities}, Faber and Hauser \cite{Faber_Hauser_2010}, Hauser \cite{Hauser_2003}, Hironaka \cite{Hironaka_1963}, Koll\'ar \cite{Kollar_lectures_resolution_singularities}, Villamayor \cite{Villamayor_1989, Villamayor_1992, Encinas_Villamayor_1998}, and W{\l}odarczyk \cite{Wlodarczyk_2005, Wlodarczyk_2008}. Proofs of special cases of resolution of singularities for real and complex analytic varieties were previously provided by Bierstone and Milman \cite{BierstoneMilman, Bierstone_Milman_1989}. The most useful version of resolution of singularities for our application is

\begin{thm}[Principalization and monomialization of an ideal sheaf]
\label{thm:Monomialization_ideal_sheaf}
(See Bierstone and Milman \cite[Theorem 1.10]{Bierstone_Milman_1997}, Koll{\'a}r \cite[Theorems 3.21 and 3.26 and p. 135 and Section 3.44]{Kollar_lectures_resolution_singularities} and W{\l}odarczyk \cite[Theorem 2.0.2]{Wlodarczyk_2008} for analytic varieties; compare W{\l}odarczyk \cite[Theorem 1.0.1]{Wlodarczyk_2005} for algebraic varieties.)
If $X$ is a smooth analytic variety and $\sI \subset \sO_X$ is a nonzero ideal sheaf, then there are a smooth analytic variety $\widetilde{X}$ and a birational and projective morphism $\pi : \widetilde{X} \to X$ such that
\begin{enumerate}
  \item $\pi^*\sI \subset \sO_{\widetilde{X}}$ is the ideal sheaf of a simple normal crossing divisor,
  \item $\pi : \widetilde{X} \to X$ is an isomorphism over $X \less \cosupp\sI$, where $\cosupp\sI$ (or $\supp(\sO_X/\sI)$) is the cosupport of $\sI$.
\end{enumerate}
\end{thm}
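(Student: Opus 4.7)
The plan is to prove Theorem \ref{thm:Monomialization_ideal_sheaf} by the standard algorithmic approach to resolution of singularities in characteristic zero, following Bierstone--Milman \cite{Bierstone_Milman_1997} or W{\l}odarczyk \cite{Wlodarczyk_2005, Wlodarczyk_2008}. The strategy is to define a local invariant of the pair $(X,\sI)$ that strictly decreases under a carefully chosen sequence of blowings-up along smooth centers, so that after finitely many iterations $\pi^*\sI$ becomes a locally principal monomial ideal. Throughout, one tracks the exceptional divisor $E$ accumulated so far and decomposes $\pi^*\sI$ into a factor supported on $E$ (already monomial) and a residual factor whose singularities must be further resolved; the termination condition is that the residual factor becomes the unit ideal, at which point $\pi^*\sI$ is precisely the ideal sheaf of a simple normal crossing divisor supported on $E$.

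First I would introduce Hironaka's order invariant: for $p \in X$, let $\nu_p(\sI)$ be the largest non-negative integer $n$ with $\sI_p \subset \fm_p^n$, where $\fm_p \subset \sO_{X,p}$ is the maximal ideal. The function $p \mapsto \nu_p(\sI)$ is upper semicontinuous and takes only finitely many values on a relatively compact open set, so it attains its maximum $\nu_{\max}$ on a closed analytic subset $S \subset X$. The central technical device in characteristic zero is a \emph{maximal contact hypersurface} $H$ through a point $p \in S$: a smooth analytic hypersurface that locally contains $S$, obtained from a suitable partial derivative of order $\nu_{\max}-1$ of a generator of $\sI$ having order exactly $\nu_{\max}$. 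Restricting the appropriate \emph{coefficient ideal} of $\sI$ to $H$ then yields a resolution problem of strictly smaller ambient dimension, which supports induction on $\dim X$.

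Next I would refine $\nu_p$ to a lexicographically ordered tuple $\mathrm{inv}_p(\sI)$, with entries alternating between orders of the successive residual ideals along nested maximal contact hypersurfaces and counts of the older components of $E$ meeting $p$ transversally. The maximum locus of $\mathrm{inv}_p$ turns out to be a closed, smooth analytic subvariety $Z \subset X$ having simple normal crossings with $E$; the smoothness of $Z$ is the crucial output of the maximal contact construction, and is the step that genuinely requires characteristic zero. Blowing up $Z$ gives $\pi_1 : X_1 \to X$, an isomorphism over $X \less Z$, and one replaces $\sI$ by its controlled (or weak) transform $\sI_1$ on $X_1$. The core combinatorial lemma is that the maximum of the invariant strictly drops:
\[
\max_{q \in X_1} \mathrm{inv}_q(\sI_1) < \max_{p \in X} \mathrm{inv}_p(\sI)
\]
in the lexicographic order.

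Finally, iterating produces a tower $\cdots \to X_2 \to X_1 \to X$, and because $\mathrm{inv}$ takes values in a well-ordered set bounded above, the process terminates in finitely many steps over any relatively compact open subset. Local finiteness of the chosen centers, together with coherence of $\sI$, allows these local monomializations to be patched to a global birational proper morphism $\pi : \widetilde{X} \to X$ such that $\pi^*\sI$ is the ideal sheaf of a simple normal crossing divisor, and $\pi$ is an isomorphism over $X \less \cosupp\sI$ since every center is chosen inside the current residual singular locus. The main obstacle will be the construction of the invariant $\mathrm{inv}$ together with the proof that its maximum locus is smooth and that its maximum strictly decreases after blow-up; this is the technical heart of resolution of singularities, and is where the machinery of maximal contact, coefficient ideals, and Hironaka's normal flatness enters.
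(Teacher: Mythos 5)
The paper does not prove Theorem \ref{thm:Monomialization_ideal_sheaf}; it states the result as a citation to Bierstone--Milman \cite{Bierstone_Milman_1997}, Koll{\'a}r \cite{Kollar_lectures_resolution_singularities}, and W{\l}odarczyk \cite{Wlodarczyk_2005,Wlodarczyk_2008} and then uses it as a black box in Section \ref{subsec:Application_Lojasiewicz_gradient_inequality}. Your sketch is an accurate high-level summary of the approach those references actually take: the Hironaka order, nested maximal contact hypersurfaces and coefficient ideals to descend in ambient dimension, a lexicographically ordered upper-semicontinuous invariant whose maximum locus is smooth and has normal crossings with the accumulated exceptional divisor, strict decrease of the maximum under blow-up along that locus, well-foundedness to force termination, and local finiteness plus canonicity (equivariance under local isomorphism) to glue the local sequences into a single proper, projective, birational morphism that is an isomorphism off the cosupport. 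So as a road map your proposal is consistent with the paper's cited sources.

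That said, you should be aware that what you have written is a plan, not a proof, and that the paper itself deliberately avoids re-deriving this result. Every one of the steps you defer --- showing that the refined invariant is well defined independently of the choice of maximal contact hypersurface, that its maximum locus is smooth and simple normal crossings with $E$, and that the invariant strictly drops after the prescribed blow-up --- is where the real difficulty lies, and each occupies a substantial portion of \cite{Bierstone_Milman_1997} and \cite{Kollar_lectures_resolution_singularities}. Two smaller points worth tightening if you were to flesh this out: you claim the resulting $\pi$ is a ``proper birational morphism,'' whereas the theorem asserts it is \emph{projective}; in the analytic setting this follows because a finite composition of blow-ups along closed smooth centers is projective, but it should be said. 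And the passage from a relatively compact subset of $X$ to all of $X$ requires canonicity (functoriality with respect to open embeddings and local isomorphisms) of the resolution algorithm, not merely local finiteness of centers --- without canonicity the locally constructed sequences need not agree on overlaps.
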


Versions of Theorem \ref{thm:Monomialization_ideal_sheaf} when $X$ is an algebraic surface over a field of characteristic zero are provided by Cutkosky \cite[p. 29]{Cutkosky_resolution_singularities} and Koll{\'a}r \cite[Theorem 1.74]{Kollar_lectures_resolution_singularities}. Kashiwara and Schapira \cite{Kashiwara_Schapira_sheaves_manifolds} provide the following useful variant of Theorem \ref{thm:Monomialization_ideal_sheaf}.

\begin{prop}[Desingularization for the zero set of a real analytic function and its gradient map]
\label{prop:Kashiwara_Schapira_8-2-4}
(See Kashiwara and Schapira \cite[Proposition 8.2.4]{Kashiwara_Schapira_sheaves_manifolds}.) Let $X$ be a real analytic manifold and $f: X \to \RR$ be a real analytic function that is not identically zero on each connected component of $X$. Set $Z = \{x \in X:f(x) = 0 \text{ and } df(x) = 0\}$. Then there exists a proper morphism of real analytic manifolds $\pi:Y\to X$ that induces a real analytic diffeomorphism $Y\less\pi^{-1}(Z) \cong X\less Z$ such that, in an open neighborhood of each point $y_0 \in \pi^{-1}(Z)$, there exist local coordinates $\{y_1,\ldots,y_d\}$ with $f\circ\pi(y) = \pm y_1^{n_1}\cdots y_d^{n_d}$, for some $n_i \in \ZZ\cap[0,\infty)$ with $i=1,\ldots,d$.
\end{prop}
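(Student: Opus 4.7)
The plan is to deduce Proposition \ref{prop:Kashiwara_Schapira_8-2-4} from Theorem \ref{thm:Monomialization_ideal_sheaf} applied to the principal ideal sheaf $\sI := f\cdot\sO_X$, followed by an elementary coordinate change to absorb the unit factor that survives after monomialization.

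First, I would invoke Theorem \ref{thm:Monomialization_ideal_sheaf} for $\sI$ to obtain a projective (hence proper) birational morphism $\pi:Y\to X$ of smooth real analytic manifolds such that $\pi^*\sI\subset\sO_Y$ is the ideal sheaf of a simple normal crossing divisor. Near each $y_0\in Y$ there are therefore local coordinates $(y_1,\ldots,y_d)$ centered at $y_0$ and exponents $n_i\in\ZZ\cap[0,\infty)$ with
\[
f\circ\pi(y) \;=\; u(y)\, y_1^{n_1}\cdots y_d^{n_d},
\]
where $u\in\sO_{Y,y_0}$ is a unit, i.e., $u(y_0)\neq 0$. The theorem also gives that $\pi$ is an isomorphism over $X\less\cosupp\sI = X\less\{f=0\}$. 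To upgrade this to an isomorphism over $X\less Z$, I would note that at each point $p\in\{f=0\}\less Z$ we have $df(p)\neq 0$, so the Implicit Function Theorem provides local analytic coordinates on an open neighborhood of $p$ in which $f$ is itself one of the coordinate functions, whence $\sI$ is already locally monomial at $p$. Standard algorithmic desingularizations (see Bierstone--Milman \cite{Bierstone_Milman_1997} or W{\l}odarczyk \cite{Wlodarczyk_2005, Wlodarczyk_2008}) blow up only along centers contained in the locus where the input ideal fails to be monomial, so $\pi$ restricts to a real analytic diffeomorphism $Y\less\pi^{-1}(Z)\cong X\less Z$.

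Finally, I would absorb the unit. Fix $y_0\in\pi^{-1}(Z)$; since $f\circ\pi(y_0)=0$ and $u(y_0)\neq 0$, not every exponent can vanish, so after relabeling the coordinates we may assume $n_1\geq 1$. Put $\epsilon:=\sign u(y_0)\in\{\pm 1\}$ and $v:=\epsilon u$, so that $v$ is a positive analytic function on some neighborhood of $y_0$. Then $v^{1/n_1}$ is a well-defined positive analytic function there, and the substitution $\tilde y_1:=y_1\,v(y)^{1/n_1}$, $\tilde y_i:=y_i$ for $i\geq 2$, has Jacobian $v(y_0)^{1/n_1}\neq 0$ at $y_0$ and therefore defines a local analytic diffeomorphism. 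Since $\tilde y_1^{n_1}=y_1^{n_1}v$, in the new coordinates
\[
f\circ\pi(y) \;=\; \epsilon\, \tilde y_1^{n_1}\, y_2^{n_2}\cdots y_d^{n_d},
\]
as required. The chief subtlety in this scheme is the refinement in the second step, upgrading the isomorphism locus from $X\less\{f=0\}$ to $X\less Z$; this is not explicit in the statement of Theorem \ref{thm:Monomialization_ideal_sheaf} but is a standard property of modern principalization algorithms, according to which no blow-up occurs over the monomial locus of the input ideal. Once this is in hand, the absorption of the unit factor uses only the analyticity of $t\mapsto t^{1/n_1}$ on $(0,\infty)$ together with the Inverse Function Theorem.
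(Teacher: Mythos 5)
The paper does not prove Proposition \ref{prop:Kashiwara_Schapira_8-2-4}; it is stated purely as a citation to Kashiwara--Schapira and offered as a ``useful variant'' of Theorem \ref{thm:Monomialization_ideal_sheaf}, so your proposal is an independent derivation rather than a competitor to an argument in the paper. Your strategy is the natural one, and the final unit-absorption step is correct as written: since $\pi(y_0)\in Z$ forces $f\circ\pi(y_0)=0$ while $u(y_0)\neq 0$, some $n_i\geq 1$, and the substitution $\tilde y_1 = y_1(\epsilon u)^{1/n_1}$ with $\epsilon=\sign u(y_0)$ is a local analytic diffeomorphism converting $u\,y_1^{n_1}\cdots y_d^{n_d}$ to $\epsilon\,\tilde y_1^{n_1}y_2^{n_2}\cdots y_d^{n_d}$.

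The genuine gap is exactly the one you flag and then do not fully close. Theorem \ref{thm:Monomialization_ideal_sheaf} as quoted in the paper only guarantees that $\pi$ is an isomorphism over $X\less\cosupp\sI = X\less f^{-1}(0)$, which is strictly smaller than $X\less Z$ whenever $f^{-1}(0)$ has a smooth point. Your bridge has two halves: that the simple normal crossings locus of $\sI=f\sO_X$ contains $X\less Z$, and that the principalization morphism is an isomorphism over the SNC locus of $\sI$ because the algorithm blows up only over its complement. The first half is a correct one-line consequence of the Implicit Function Theorem, exactly as you say. The second half is true for the Bierstone--Milman and W{\l}odarczyk constructions, but it is \emph{not} contained in the statement of Theorem \ref{thm:Monomialization_ideal_sheaf} as reproduced in the paper, and it is precisely the extra content that separates the Proposition from that theorem. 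As written your proof rests this step on an appeal to ``a standard property of modern principalization algorithms'' without producing a theorem statement that asserts it. To make the argument rigorous you should invoke a form of principalization that explicitly controls the centers --- for example, that the centers lie in the maximal stratum of the resolution invariant, which for a principal ideal is disjoint from the locus where the ideal already defines a normal crossing divisor (this is built into Bierstone--Milman's invariant; Koll\'ar also discusses the placement of centers around \cite[Theorem 3.26]{Kollar_lectures_resolution_singularities}). Absent such a statement, the conclusion that $\pi$ restricts to a real analytic diffeomorphism $Y\less\pi^{-1}(Z)\cong X\less Z$, rather than merely $Y\less\pi^{-1}(f^{-1}(0))\cong X\less f^{-1}(0)$, is asserted rather than proved.
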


\subsection{Application to the {\L}ojasiewicz gradient inequality}
\label{subsec:Application_Lojasiewicz_gradient_inequality}
We can now conclude the proof of one of our main theorems.

\begin{proof}[Proof of Theorem \ref{mainthm:Lojasiewicz_gradient_inequality}]
As in the proof of Theorem \ref{thm:Lojasiewicz_gradient_inequality_generalized_Morse-Bott_function}, we may assume without loss of generality that $x_\infty= 0$ and $\sE(0)=0\in\KK$. Define $\sI := \sE\sO_U$ to be the ideal in $\sO_U$ generated by $\sE$, with support of $\sO_U/\sI$ given by $Z = \sE^{-1}(0)$. Let $\pi:\widetilde{U}\to U$ be a resolution of singularities provided by Theorem \ref{thm:Monomialization_ideal_sheaf}, so
\[
\pi^*\sI = \tilde{\sE}\sO_{\widetilde{U}}
\]
is the ideal sheaf of a simple normal crossing divisor, where $\tilde{\sE} := \sE\circ\pi$ and
\[
\pi:\widetilde{U}\less E \cong U\less Z
\]
is an analytic diffeomorphism, with
\[
E := \pi^{-1}(Z) = \{\tilde{x}\in \widetilde{U}: \tilde{\sE}(\tilde{x}) = 0\} \subset \widetilde{U}
\]
denoting the exceptional divisor (with ideal $\pi^*\sI$).

By assumption, $0 \in Z$ and we may further assume without loss of generality that $0 \in \pi^{-1}(0) \subset E$ and $\widetilde{U} \subset \KK^d$ is an open neighborhood of the origin, possibly after shrinking $U$ and hence $\widetilde{U}$. By Theorem \ref{thm:Monomialization_ideal_sheaf}, the function $\tilde{\sE}$ is the product of a monomial in the coordinate functions $x_1,\ldots,x_d$ and an analytic function $\sF$ that is non-zero at the origin. In particular, $\tilde{\sE}$ has simple normal crossings in the sense of Definition \ref{defn:C1_function_normal_crossings}, possibly after further shrinking $U$ and hence $\widetilde{U}$, so $\sF(\tilde{x})\neq 0$ for all $\tilde{x} \in \widetilde{U}$. We can thus apply Theorem \ref{mainthm:Lojasiewicz_gradient_inequality_normal_crossing_divisor} to $\tilde{\sE} = \sE\circ\pi$ and obtain
\[
\|(\sE\circ\pi)'(\tilde{x})\|_{\KK^{d*}}
\geq C|(\sE\circ\pi)(\tilde{x})|^\theta, \quad\text{for all } \tilde{x} \in B_\delta,
\]
for constants $C \in (0,\infty)$ and $\theta \in [1/2,1)$ and $\delta \in (0,1]$. Now $(\sE\circ\pi)(\tilde{x}) = \sE(x)$ for $x = \pi(\tilde{x}) \in U$ and therefore the preceding gradient inequality yields
\begin{equation}
\label{eq:Lojasiewicz_gradient_inequality_composition_with_morphism}
\|(\sE\circ\pi)'(\tilde{x})\|_{\KK^{d*}}
\geq C|\sE(x)|^\theta, \quad\text{for all } \tilde x \in B_\delta \text{ and } x = \pi(\tilde x) \in \pi(B_\delta).
\end{equation}
The Chain Rule gives
\begin{align*}
\|(\sE\circ\pi)'(\tilde{x})\|_{\KK^{d*}}
&\leq \|\sE'(\pi(\tilde{x}))\|_{\KK^{d*}} \|\pi'(\tilde{x})\|_{\End(\KK^d)}
\\
&\leq M\|\sE'(\pi(\tilde{x}))\|_{\KK^{d*}}
\quad\text{for all } \tilde{x} \in \widetilde{U},
\end{align*}
where $M := \sup_{\tilde{x} \in B_\delta} \|\pi'(\tilde{x})\|_{\End(\KK^d)}$. Because $\pi(\tilde{x})=x\in U$, the preceding inequality simplifies:
\begin{equation}
\label{eq:Gradient_composition_with_morphism_lower_bound}
\|(\sE\circ\pi)'(\tilde{x})\|_{\KK^{d*}}
\leq
M\|\sE'(x)\|_{\KK^{d*}}, \quad\text{for all } \tilde{x} \in \widetilde{U} \text{ and } x = \pi(\tilde x) \in U.
\end{equation}
The map $\pi$ is open and so $\pi(B_\delta)$ is an open neighborhood of the origin in $\KK^d$ and thus contains a ball $B_\sigma$ for small enough $\sigma\in (0,1]$. By combining the inequalities \eqref{eq:Lojasiewicz_gradient_inequality_composition_with_morphism} and \eqref{eq:Gradient_composition_with_morphism_lower_bound}, we obtain
\[
\|\sE'(x)\|_{\KK^{d*}} \geq (C/M)|\sE(x)|^\theta, \quad\text{for all } x \in B_\sigma,
\]
which is \eqref{eq:Lojasiewicz_gradient_inequality}, as desired.
\end{proof}

We can also complete the proof of one of the main corollaries.

\begin{proof}[Proof of Corollary \ref{maincor:Characterization_optimal_exponent_Morse-Bott_condition}]
From the proof of Theorem \ref{mainthm:Lojasiewicz_gradient_inequality}, the analytic function $\pi^*\sE:\tilde U \to \KK$ has simple normal crossings near the origin in the sense of Definition \ref{defn:C1_function_normal_crossings} and so (after possibly shrinking $U$)
\[
  \pi^*\sE(\tilde x) = \sF(\tilde x)\,\tilde x_1^{n_1}\cdots \tilde x_d^{n_d}, \quad\text{for all } \tilde x \in \tilde U,
\]
where $\sF:\tilde U\to\KK$ is an analytic function such that $\sF(\tilde x)\neq 0$ for all $\tilde x \in \tilde U$ and the $n_i$ are non-negative integers for $i=1,\ldots,d$. Theorem \ref{mainthm:Lojasiewicz_gradient_inequality_normal_crossing_divisor} therefore implies that $\pi^*\sE$ has {\L}ojasiewicz exponent $\theta = 1-1/N$, where $N = \sum_{i=1}^d n_i$ is the total degree of the monomial. In particular, if $\theta=1/2$ then $N=2$ and (after possibly relabeling the coordinates)
\[
  \pi^*\sE(\tilde x) = \sF(\tilde x)\,\tilde x_1^2 \quad\text{or}\quad \pi^*\sE(\tilde x) = \sF(\tilde x)\,\tilde x_1\tilde x_2, \quad\text{for all } \tilde x \in \tilde U.
\]  
Hence, $\pi^*\sE$ is Morse--Bott at the origin in the sense of Definition \ref{defn:Morse-Bott_function}, with
\[
  \Crit\pi^*\sE = \{\tilde x \in \tilde U: \tilde x_1 = 0\} \quad\text{or}\quad \Crit\pi^*\sE = \{\tilde x \in \tilde U: \tilde x_1 = 0 \text{ and } \tilde x_2 = 0\}.
\]
From the proof of Theorem \ref{mainthm:Lojasiewicz_gradient_inequality}, the map $\pi$ is an analytic diffeomorphism from $\tilde U \less (\pi^*\sE)^{-1}(0)$ onto $U \less \sE^{-1}(0)$, where
\[
  \Crit\pi^*\sE  \subset (\pi^*\sE)^{-1}(0) = \{\tilde x \in \tilde U: \tilde x_1 = 0\}
\]
or
\[
  \Crit\pi^*\sE  \subset (\pi^*\sE)^{-1}(0) = \{\tilde x \in \tilde U: \tilde x_1 = 0 \text{ or } \tilde x_2 = 0\}.
\]
In particular, $\pi$ is an analytic diffeomorphism on the complement of a coordinate hyperplane or the union of two coordinate hyperplanes, as claimed.  
\end{proof}

\section{{\L}ojasiewicz distance inequalities}
\label{sec:Lojasiewicz_distance_inequalities}
It remains to prove the distance inequalities (Corollaries \ref{maincor:Lojasiewicz_distance_inequality} and \ref{maincor:Lojasiewicz_gradient-distance_inequality}). For this purpose, the proof of
\cite[Theorem 2.8]{Bierstone_Milman_1997} (see also \cite{Lojasiewicz_1984}) applies but we shall include additional details for completeness. We assume a {\L}ojasiewicz exponent\footnote{We exclude the trivial case $\theta=1$ and $\sE'(0)\neq 0$.} $\theta \in [1/2,1)$, denoted by $\mu=1-\theta\in(0,1/2]$ in \cite{Bierstone_Milman_1997}.

The following result on the convergence of gradient flow is a refinement of a result due to {\L}ojasiewicz (see \cite[Theorem 5]{Lojasiewicz_1963} and \cite[Theorem 1]{Lojasiewicz_1984}, where it is assumed in addition that $\sF$ is analytic).

\begin{thm}[Existence and convergence of solutions to the gradient flow equation]
\label{thm:Gradient_flow}  
Let $d \geq 1$ be an integer, $U \subset \RR^d$ be an open subset, and $\sF:U\to\RR$ be a $C^1$ function such that $\sF(0)=0$ and $\sF'(0)=0$ and $\sF \geq 0$ on $U$ and $\sF$ obeys the {\L}ojasiewicz gradient inequality \eqref{eq:Lojasiewicz_gradient_inequality} with constants $C \in (0, \infty)$ and $\sigma \in (0,1]$ and $\theta \in [1/2,1)$:
\[
\|\sF'(x)\|_{\RR^{d*}} \geq C|\sF(x)|^\theta, \quad\text{for all } x \in B_\sigma.
\]
Then there are a constant $\delta \in (0,\sigma/4]$ and, for each $x \in B_\delta$, a solution, $\bx$ in $C([0,\infty);\RR^d))\cap C^1((0,\infty);\RR^d)$, to 
\begin{equation}
\label{eq:Gradient_flow}
\frac{d\bx}{dt} = -\sF'(\bx(t)) \quad \text{(in $\RR^d$) with } \bx(0)=x,
\end{equation}
such that $\bx(t) \in B_{\sigma/2}$ for all $t\in[0,\infty)$ and $\bx(t) \to \bx_\infty$ in $\RR^d$ as $t\to\infty$, where $\bx_\infty \in B_\sigma\cap\Crit\sE$.
\end{thm}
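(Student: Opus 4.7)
The plan is to use the Peano existence theorem to obtain a local $C^1$ solution to the gradient flow equation \eqref{eq:Gradient_flow} (valid since $\sF'$ is continuous on $U$), and then to exploit the {\L}ojasiewicz gradient inequality to bound the total arc length of the trajectory. This finite-length bound both traps the trajectory in $B_{\sigma/2}$, allowing the solution to be extended to all $t\in[0,\infty)$, and forces convergence to a limit which must lie in $\Crit\sF$.

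Concretely, first I would choose $\delta \in (0, \sigma/4]$ small enough that
\[
\|x\|_{\RR^d} + \frac{\sF(x)^{1-\theta}}{C(1-\theta)} < \frac{\sigma}{2}, \quad\forall\, x \in B_\delta,
\]
which is possible because $\sF(0)=0$, $\sF$ is continuous, and $1-\theta>0$. Fix $x \in B_\delta$ and let $\bx:[0,T)\to\RR^d$ be a maximal solution to $\bx'(t)=-\sF'(\bx(t))$ with $\bx(0)=x$ provided by Peano, with $T>0$ the largest time for which $\bx$ stays in $B_\sigma$. Along the flow we have $\frac{d}{dt}\sF(\bx(t)) = -\|\sF'(\bx(t))\|_{\RR^{d*}}^2 \leq 0$, so $\sF(\bx(t))$ is non-increasing. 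On any subinterval where $\sF(\bx(t))>0$, the {\L}ojasiewicz gradient inequality \eqref{eq:Lojasiewicz_gradient_inequality} gives
\[
-\frac{d}{dt}\sF(\bx(t))^{1-\theta} = (1-\theta)\sF(\bx(t))^{-\theta}\|\sF'(\bx(t))\|_{\RR^{d*}}^2 \geq (1-\theta)C\|\sF'(\bx(t))\|_{\RR^{d*}} = (1-\theta)C\|\bx'(t)\|_{\RR^d}.
\]
Integrating from $0$ to any $t\in[0,T)$ yields the key estimate
\[
\int_0^t \|\bx'(\tau)\|_{\RR^d}\,d\tau \leq \frac{\sF(x)^{1-\theta} - \sF(\bx(t))^{1-\theta}}{(1-\theta)C} \leq \frac{\sF(x)^{1-\theta}}{(1-\theta)C};
\]
if $\sF(\bx(\tau))=0$ at some time then $\bx(\tau)$ is a critical point (by non-negativity of $\sF$) and the flow is stationary thereafter, so the bound remains valid.

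The choice of $\delta$ and the triangle inequality then give $\|\bx(t)\|_{\RR^d} < \sigma/2$ for all $t\in[0,T)$, so $\bx$ stays in the compact set $\overline{B_{\sigma/2}} \subset B_\sigma$. A standard continuation argument for Peano solutions forces $T=\infty$. Since the arc length integral $\int_0^\infty\|\bx'(\tau)\|_{\RR^d}\,d\tau$ is finite, $\bx(t)$ is Cauchy in $\RR^d$ and therefore converges to some $\bx_\infty \in \overline{B_{\sigma/2}}\subset B_\sigma$ as $t\to\infty$.

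Finally, to see $\bx_\infty \in \Crit\sF$, suppose for contradiction that $\sF'(\bx_\infty)\neq 0$. By continuity of $\sF'$ and convergence of $\bx(t)\to\bx_\infty$, there is $T_0<\infty$ with $\|\sF'(\bx(t))\|_{\RR^{d*}} \geq \tfrac{1}{2}\|\sF'(\bx_\infty)\|_{\RR^{d*}}>0$ for all $t\geq T_0$, and then $\int_{T_0}^\infty\|\bx'(\tau)\|_{\RR^d}\,d\tau=\infty$, contradicting the finite arc length bound. The main technical obstacle is genuinely the arc-length bound: everything else is standard ODE theory, and it is precisely the {\L}ojasiewicz inequality that converts the elementary energy identity into integrability of $\|\bx'\|_{\RR^d}$ via the change of variables $s=\sF(\bx(t))^{1-\theta}$ — the classical insight of {\L}ojasiewicz which makes the whole argument work.
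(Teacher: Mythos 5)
Your proof is correct, and it is exactly the classical finite-arc-length argument that the paper's proof invokes by citation: the paper disposes of Theorem \ref{thm:Gradient_flow} by pointing to {\L}ojasiewicz \cite[Theorem 1]{Lojasiewicz_1984} (noting the analyticity hypothesis there is used only through the gradient inequality) and to \cite[Theorem 4]{Feehan_yang_mills_gradient_flow_v4}, both of which rest on the same reparameterization $t \mapsto \sF(\bx(t))^{1-\theta}$ and the resulting arc-length bound that you derive explicitly. Your self-contained write-up is thus a faithful rendering of the argument the paper cites; the one step worth making fully explicit is the continuation claim, which follows because the arc-length bound traps $\bx$ in the compact set $\overline{B_{\sigma/2}} \subset U$ and a maximal Peano solution must leave every compact subset of $U$ if its interval of existence is bounded.
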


\begin{proof}
When $\sF$ is analytic (and thus $\sF$ obeys \eqref{eq:Lojasiewicz_gradient_inequality} by Theorem \ref{mainthm:Lojasiewicz_gradient_inequality}), the conclusions were established by {\L}ojasiewicz \cite[Theorem 1]{Lojasiewicz_1984}: Examination of his proof reveals that it is enough to assume that $\sF$ obeys \eqref{eq:Lojasiewicz_gradient_inequality}. The conclusions may also be obtained by specializing \cite[Theorem 4]{Feehan_yang_mills_gradient_flow_v4} to the case of Euclidean space $\RR^d$ (from the Banach and Hilbert space setting considered there) and noting that its hypotheses are fulfilled when $\sF$ is $C^1$ because \eqref{eq:Lojasiewicz_gradient_inequality} holds by hypothesis here, by appealing to the Peano Existence Theorem (see Hartman \cite[Theorem 2.2.1]{Hartman_2002}) for its hypothesis on short-time existence of solutions to \eqref{eq:Gradient_flow}, and by appealing to the integral version \cite[Equation (1.1.2)]{Hartman_2002} of the gradient flow equation \eqref{eq:Gradient_flow},
\[
\by(t) = \by(0) - \int_0^t \sF'(\by(s))\,ds,
\]  
for its hypothesis on estimates for $\|\by(t)-\by(0)\|_{\RR^d}$ for small $t$.
\end{proof}  

We now begin the proof of one of our corollaries.

\begin{proof}[Proof of Corollary \ref{maincor:Lojasiewicz_distance_inequality}]
Consider Item \eqref{item:Distance_critical_set}. Let $\delta \in (0,\sigma/4]$ denote the constant for $\sE$ provided by Theorem \ref{eq:Gradient_flow}. Consider a point $x \in B_\delta$ such that $\sE(x)\neq 0$ and thus $\sE'(x)\neq 0$ by \eqref{eq:Lojasiewicz_gradient_inequality}. Let $T_0 \in (0,\infty]$ be the smallest time such that $\sE'(\bx(T_0))=0$ (and thus $\bx(T_0) \in B_\sigma\cap\Crit\sE$), where $\bx \in C([0,\infty);\RR^d)\cap C^1((0,\infty);\RR^d)$ is the solution to \eqref{eq:Gradient_flow} provided by Theorem \ref{thm:Gradient_flow}, and define the $C^1$ arc-length parameterization function by
\[
  s(t) := \int_0^t \|\dot\bx(t)\|_{\RR^d}\,dt, \quad\text{for all } t \in [0,T_0),
\]
so that $ds/dt = \|\dot\bx(t)\|_{\RR^d} = \|\sE'(\bx(t))\|_{\RR^d}$ by \eqref{eq:Gradient_flow}, denoting $\dot\bx(t) = d\bx/dt$ for convenience. (We use the isometric isomorphism $\RR^d \ni \xi \mapsto (\cdot,\xi)_{\RR^d} \in \RR^{d*}$ to view $\sE'(x)$ as an element of $\RR^d$ or $\RR^{d*}$ according to the context.) Set $S_0 := s(T_0) \in (0,\infty]$ and write $t = t(s)$ for $s \in [0,S_0)$. Define $\by(s) := \bx(t(s))$ and observe that
\[
  \frac{d\by}{ds} = \frac{d\bx}{dt}\frac{dt}{ds} = \frac{d\bx}{dt}\left(\frac{ds}{dt}\right)^{-1} = \frac{d\bx}{dt}\frac{1}{\|\sE'(\bx(t))\|_{\RR^d}} = -\frac{\sE'(\bx(t))}{\|\sE'(\bx(t))\|_{\RR^d}}, \quad\text{for all } t \in (0,T_0),
\]
where we again apply \eqref{eq:Gradient_flow} to obtain the final equality. Hence, $\by \in C([0,S_0);\RR^d)\cap C^1((0,S_0);\RR^d)$ is a solution to the ordinary differential equation,
\begin{equation}
  \label{eq:Gradient_flow_arclength}
  \frac{d\by}{ds} = -\frac{\sE'(\by(s))}{\|\sE'(\by(s))\|_{\RR^d}} \quad \text{(in $\RR^d$) with } \by(0)=x.
\end{equation}
Write $Q(s):=\sE(\by(s))$ and observe that
\begin{align*}
  Q'(s) &= \sE'(\by(s))\by'(s)
  \\
  &= (\by'(s),\sE'(\by(s)))_{\RR^d} \quad\text{(inner product)}
  \\
  &= -\frac{\left(\sE'(\by(s)), \sE'(\by(s))\right)_{\RR^d}}{\|\sE'(\by(s))\|_{\RR^d}}, \quad\text{for all } s\in [0,S_0) \quad\text{(by \eqref{eq:Gradient_flow_arclength}).}
\end{align*}
In particular, we obtain
\begin{equation}
\label{eq:dQds_negative}
Q'(s) = - \|\sE'(\by(s))\|_{\RR^d} < 0, \quad\text{for all } s\in [0,S_0).  
\end{equation}
Now $Q(0) = \sE(x)>0$ (since $\sE\geq 0$ on $U$ by hypothesis and $\sE(x)\neq 0$ by assumption) and $Q(s)\leq Q(0)$ for all $s \in [0,S_0)$ by \eqref{eq:dQds_negative}. But then we have
\begin{align*}
\frac{\sE(x)^{1-\theta}}{1-\theta} &\geq \frac{Q(0)^{1-\theta} - Q(s)^{1-\theta}}{1-\theta}
\\
&= -\frac{1}{1-\theta} \int_{0}^{s} \frac{d}{du}Q(u)^{1-\theta}\,du
\\
&= - \int_{0}^{s} Q(u)^{-\theta}Q'(u)\,du
\\
&= \int_{0}^{s} \sE(\by(u))^{-\theta}\|\sE'(\by(u))\|_{\RR^d}\,du
\\
&\geq \int_{0}^{s} C\,du = Cs \quad\text{for all } 0 \leq s < S_0 \quad\text{(by \eqref{eq:Lojasiewicz_gradient_inequality})}.
\end{align*}
In applying the {\L}ojasiewicz gradient inequality \eqref{eq:Lojasiewicz_gradient_inequality} to obtain the last line above, we relied on the fact that $\by(s) = \bx(t) \in B_{\sigma/2}$ by \eqref{eq:Gradient_flow} for all $t\in [0,T_0)$ or, equivalently, $s \in [0,S_0)$. Therefore,
\begin{equation}
\label{eq:Energy_power_lower_bound}
\frac{\sE(x)^{1-\theta}}{1-\theta} \geq CS_0. 
\end{equation}
It follows that $S_0<\infty$ and thus as $s\uparrow S_0$, the solution $\by(s)$ converges (in $\RR^d$) to a point $\by(S_0) = \bx(T_0) \in \Crit\sE$ in a finite time $S_0$. Moreover, by \eqref{eq:Gradient_flow} we also have $\bx(T_0) \in \bar{B}_{\sigma/2} \subset B_\sigma$. Since $\|\by'(s)\|_{\RR^d} = 1$, then $\by(s)$ is parameterized by arc length and
\begin{align*}
  S_0 &= \Length_{\RR^d}\{\by(s): s\in [0,S_0]\} = \int_0^{S_0}\|\dot\by(s)\|_{\RR^d}\,ds
  \\
      &\geq \|\by(S_0)-\by(0)\|_{\RR^d}
  \\
  &= \|\bx(T_0)-x\|_{\RR^d}
  \\
      &\geq \inf_{z\in B_\sigma\cap\Crit\sE}\|z-x\|_{\RR^d} \quad\text{(since $\bx(T_0) \in B_\sigma\cap\Crit\sE$)}
  \\
      &= \dist_{\RR^d}(x, B_\sigma\cap\Crit\sE).
\end{align*}
From \eqref{eq:Energy_power_lower_bound}, we thus obtain
\[
\sE(x)^{1-\theta} \geq (1-\theta)C\,\dist_{\RR^d}(x, B_\sigma\cap\Crit\sE),
\]
and this is \eqref{eq:Lojasiewicz_distance_inequality_critical_set}, with exponent $\alpha = 1/(1-\theta) \in [2,\infty)$ and positive constant $C_1 = ((1-\theta)C)^{1/(1-\theta)}$. 

We now assume the additional hypothesis that $B_\sigma\cap\Crit\sE \subset B_\sigma\cap\Zero\sE$. Hence,
\begin{align*}
  \dist_{\RR^d}(x, B_\sigma\cap\Crit\sE) &= \inf_{z\in B_\sigma\cap\Crit\sE}\|z-x\|_{\RR^d}
  \\
                                         &\geq \inf_{z\in B_\sigma\cap\Zero\sE}\|z-x\|_{\RR^d}
  \\
  &= \dist_{\RR^d}(x, B_\sigma\cap\Zero\sE).
\end{align*}
Therefore, \eqref{eq:Lojasiewicz_distance_inequality_zero_set} follows from \eqref{eq:Lojasiewicz_distance_inequality_critical_set}. This proves Item \eqref{item:Distance_critical_set}.

Consider Item \eqref{item:Distance_zero_noncritical_set}. We can apply \eqref{eq:Lojasiewicz_distance_inequality_zero_set} to $\sF=\sE^2$ with constants $C_1\in (0,\infty)$ and $\alpha = 1/(1-\theta) \in [2,\infty)$ and $\sigma \in (0,1]$ and $\delta\in(0,\sigma/4]$ determined by $\sF$ to give
\[
\sF(x) \geq C_1\,\dist_{\RR^d}(x, B_\sigma\cap\Zero\sF)^\alpha, \quad\text{for all } x \in B_\delta.
\]
Clearly, $\Zero\sE = \Zero\sF$ and therefore,
\[
\sE(x)^2 \geq C_1\,\dist_{\RR^d}(x, B_\sigma\cap\Zero\sE)^\alpha, \quad\text{for all } x \in B_\delta.
\]
But this is \eqref{eq:Lojasiewicz_distance_inequality_zero_noncritical_set}, as desired, with exponent $\beta = \alpha/2 \in [1,\infty)$ and positive constant $C_2 = \sqrt{C_1}$. This completes the proof of Item \eqref{item:Distance_zero_noncritical_set} and hence Corollary \ref{maincor:Lojasiewicz_distance_inequality}.
\end{proof}  

Next we give the proof of another corollary.

\begin{proof}[Proof of Corollary \ref{maincor:Lojasiewicz_gradient-distance_inequality}]
We may assume without loss of generality that $x_\infty=0$ and $\sE(0)=0$. Note that $B_\sigma\cap\Crit\sE \subset B_\sigma\cap \Zero\sE$, for small enough $\sigma \in (0,1]$, by Theorem \ref{mainthm:Lojasiewicz_gradient_inequality}. We combine the {\L}ojasiewicz gradient and distance inequalities, \eqref{eq:Lojasiewicz_gradient_inequality} and \eqref{eq:Lojasiewicz_distance_inequality_critical_set}, to give for $\delta \in (0,\sigma/4]$ and $\alpha = 1/(1-\theta) \in [2,\infty)$,
\begin{align*}
\|\sE'(x)\|_{\RR^{d*}} &\geq C_0|\sE(x)|^\theta
\\
&\geq C_0\left(C_1\dist_{\RR^d}(x, \Crit\sE)^\alpha\right)^\theta,
\quad\text{for all } x\in B_\delta.
\end{align*}
Since $\theta \in [1/2, 1)$, this yields \eqref{eq:Lojasiewicz_gradient-distance_inequality} with $\mu = \alpha\theta = \theta/(1-\theta) \in [1,\infty)$ and $C_2 = C_0C_1^\theta$.
\end{proof}

Finally, we have the proof of the last corollary.

\begin{proof}[Proof of Corollary \ref{maincor:Lojasiewicz_gradient-distance_inequality_analytic}]
We may assume without loss of generality that $x_\infty=0$ and $\sE(0)=0$. Choose $\sF(x) := \|\sE'(x)\|_{\RR^{d*}}^2$ for all $x \in U$ and observe that $\sF:U\to\RR$ is analytic and $\sF(0)=0$, so Item \eqref{item:Distance_zero_noncritical_set} of Corollary \ref{maincor:Lojasiewicz_distance_inequality} applies to $\sF$ by Remark \ref{rmk:Lojasiewicz_distance_inequality_analytic}. Applying \eqref{eq:Lojasiewicz_distance_inequality_zero_noncritical_set} with $\sF$ in place of $\sE$ gives
\[
|\sF(x)| \geq C_2\,\dist_{\RR^d}(x, B_{\sigma_1}\cap\Zero\sF)^{\alpha_1}, \quad\text{for all } x \in B_{\delta_1},
\]
for some $C_2 \in (0,\infty)$ and $\alpha_1 \in [1,\infty)$ and $\sigma_1 \in (0,1]$ and $\delta_1 \in (0,\sigma_1/4]$. Since $\Zero\sF = \Crit\sE$, this gives
\[
\|\sE'(x)\|_{\RR^{d*}}^2 \geq C_2\,\dist_{\RR^d}(x, B_{\sigma_1}\cap\Crit\sE)^{\alpha_1}, \quad\text{for all } x \in B_{\delta_1},  
\]
and taking square roots yields \eqref{eq:Lojasiewicz_gradient-distance_inequality_analytic} with $\gamma = \alpha_1/2 \in [1/2,\infty)$ and $C_3 = \sqrt{C_2}$.
\end{proof}

\appendix

\section{Resolution of singularities for the cusp curve and bounds for its {\L}ojasiewicz exponent}
\label{sec:Resolution_singularities_cusp_curve}
Let $\KK=\RR$ or $\CC$ and recall that the cusp curve, defined as the set of solutions $(x,y)\in\KK^2$ to
\begin{equation}
\label{eq:cusp_curve}
f(x,y) := x^2 - y^3 = 0
\end{equation}
is an elementary example used in many texts on algebraic geometry to illustrate applications of resolution of singularities. For example, see Hauser \cite[Figure 10, p. 333]{Hauser_2003} for a discussion and illustrations for this example and Smith \cite[Section 5]{Smith_jyvaskyla_summer_school} or Smith, Kahanp{\"a}{\"a}, Kek{\"a}l{\"a}inen, and Traves \cite[Chapter 7]{Smith_Kahanpaa_Kekalainen_Traves_invitation_algebraic_geometry}. Our purpose in this Appendix\footnote{I am grateful to Peter Kronheimer for suggesting this example.} is to illustrate the use of resolution of singularities (via repeated blow ups) for $f$ on a neighborhood of the origin $0 \in \KK^2$ to achieve a simple normal crossing function $\Pi^*f$, as predicted by Theorem \ref{thm:Monomialization_ideal_sheaf}. Our exposition of resolution of singularities for this example closely follows that of \cite[Section 5]{Smith_jyvaskyla_summer_school}. 

Let $X := \KK^2$, and $\PP^1$ be the one-dimensional projective space of all lines $\ell \subset \KK^2$, and $Z := \{(x,y)\in\KK^2: x^2-y^3=0\} \subset X$ and let
\[
  Y = \{(p,\ell) \in \KK^2\times \PP^1: p \in \ell\}
\]
be the blow-up of $\KK^2$ at the origin (a smooth algebraic variety of dimension two), where $\pi:Y \ni (p,\ell) \mapsto p \in \KK^2$ is the natural projection, $E := \pi^{-1}(0)$ is the exceptional divisor, and $\pi:Y\less E \to \KK^2\less\{0\}$ is an analytic diffeomorphism. One can show that $Y=\{(x,y,[s,t]) \in \KK^2\times\PP^1: xt-ys=0\}$ (for example, see \cite[Lemma 5.1]{Smith_jyvaskyla_summer_school}). Let $U_1,U_2 \subset \PP^1$ denote the coordinate patches given by $U_1 := \{[s,t]: s\neq 0\}$ with local coordinate $z=t/s$ and $U_2 := \{[s,t]: t\neq 0\}$ with local coordinate $w=s/t$. Define $W_1 := \KK^2\times U_1 = \KK^3$ and $W_2 := \KK^2\times U_2 = \KK^3$. In the chart $W_1$ with coordinates $(x,y,z)$, we have $Y\cap W_1 = \{xz-y=0\}$ and the map
\[
\phi_1:  \KK^2 \ni (x,z) \mapsto (x,xz,z) \in Y
\]
identifies the coordinate neighborhood $Y\cap W_1$ with $\KK^2$; in the chart $W_2$ with coordinates $(x,y,w)$, we have $Y\cap W_2 = \{xz-y=0\}$ and the map
\[
\phi_2:  \KK^2 \ni (w,y) \mapsto (wy,y,w) \in Y
\]
identifies the coordinate neighborhood $Y\cap W_2$ with $\KK^2$. On the overlaps, we have $z = y/x$ and $w = z^{-1} = x/y$. A convenient representation of local coordinates for $Y$ is $\{x, y/x\}$ in one chart and $\{x/y,y\}$ in the other.

We shall describe the blow-up map $\pi:Y\to\KK^2$ in these local coordinates. We view $Y$ as the union of two copies of $\KK^2$, one with coordinates $\{x, z\}$ and the other with coordinates $\{w, y\}$, where $z = w^{-1} = y/x$. Then the pullbacks of $\pi$ by the local coordinate charts $\phi_1,\phi_2$ are given by
\[
  \pi_1:\KK^2 \ni (x,z) \mapsto (x,xz) \in \KK^2,
\]
in the first chart, and
\[
  \pi_2:\KK^2 \ni (w,y) \mapsto (wy,y) \in \KK^2,
\]
in the second. In these coordinates, the exceptional divisor is given by $\{x=0\}$ in the first chart and by $\{y=0\}$ in the second. 

We now describe the sequence of three blow ups required to achieve the monomialization $\Pi^*f$ of $f(x,y)=x^2-y^3$:
\begin{enumerate}
\item Use $(u,v) \mapsto (x,y) = (uv,v)$ to get $Z'_1 = \{u^2v^2 - v^3 = 0\}$ with exceptional divisor $\{v=0\}$. Note that in this local chart for $Y$, the pull-back of the blow-up map $\pi_1:\KK^2\to\KK^2$ is not surjective since the line $\{y=0\}$ (aside from $(x,y)=(0,0)$) is not in the image. In the other local coordinate chart for $Y$, the pull-back of the blow-up map $\pi_2:\KK^2\to\KK^2$ is given by $(a,b) \mapsto (x,y) = (a,ab)$; this map is not surjective either since the line $\{x=0\}$ (aside from $(x,y)=(0,0)$) is not in the image. However, the combined blow-up map $\pi:Y\to\KK^2$ is surjective. In the second coordinate chart, we have $Z'_2 = \{a^2-a^3b^3=0\}$ with exceptional divisor $\{a=0\}$.
\item Use $(r,s) \mapsto (u,v) = (r,rs)$ to get $Z_1'' = \{r^4s^2 - r^3s^3 = 0\}$ with transform of the old exceptional divisor $\{s = 0\}$ and exceptional divisor $\{r=0\}$. In the second coordinate chart, $(c,d) \mapsto (a,b) = (cd,d)$, we get $Z_2'' = \{c^2d^2-c^3d^6 = 0\}$.
\item Use $(\alpha,\beta) \mapsto (r,s) = (\alpha,\alpha\beta)$ to get $Z_1''' = \{\alpha^6\beta^2 - \alpha^6\beta^3 = 0\} = \{\alpha^6\beta^2(1 - \beta) = 0\}$, with transform of old exceptional divisor $\{\beta = 0\}$ and exceptional divisor $\{\alpha=0\}$. In the second coordinate chart, $(g,h) \mapsto (c,d) = (gh,h)$, we get $Z_2''' = \{g^2h^4-g^3h^9 = 0\} = \{g^2h^4(1-gh^5) = 0\}$.
\end{enumerate}

Near $(\alpha,\beta)=(0,0)$, we have $Z''' =  \{f_0(\alpha,\beta)\alpha^6\beta^2 = 0\}$ with $f_0(\alpha,\beta)=1-\beta$. The composition of the preceding changes of variables, $\Pi$, defines an analytic map on $\KK^2$ that restricts to a diffeomorphism onto its image, 
\[
  \Pi:B\less \Pi^{-1}(Z) \ni (\alpha,\beta) \mapsto (x,y) = (\alpha^3\beta, \alpha^2\beta) \in U\less Z,
\]
for the open unit ball $B$ centered at the origin and an open neighborhood $U$ of the origin, where $\Pi^*f(\alpha,\beta) = f_0(\alpha,\beta)\alpha^6\beta^2$ and $\Pi^{-1}(Z) = \{\alpha=0 \text{ or } \beta = 0\}$. For $(x,y) \notin Z$, then $(\alpha,\beta) = (x/y, y^3/x^2)$; the line $\{\beta=1\}$ corresponds to $\{x^2-y^3=0\}$. We may remove the factor $f_0$ by further choosing $\delta = \beta\sqrt{1-\beta}$ near $\beta=0$ to give
\begin{equation}
\label{eq:Resolution_alphabeta=(0,0)}
\Pi^*f(\alpha,\delta) = \alpha^6\delta^2,
\end{equation}
a monomial of total degree $N=8$. According to Theorem \ref{mainthm:Lojasiewicz_gradient_inequality_normal_crossing_divisor}, the monomial $\alpha^6\delta^2$ has {\L}ojasiewicz exponent $1-1/N = 7/8$ and so by the last step of the proof of Theorem \ref{mainthm:Lojasiewicz_gradient_inequality} in Section \ref{subsec:Application_Lojasiewicz_gradient_inequality}, the {\L}ojasiewicz exponent $\theta$ of $f$ obeys $1/2 \leq \theta \leq 7/8$.

Near $(\alpha,\beta)=(0,1)$, that is, near $(\alpha,\gamma)=(0,0)$ when $\gamma = 1-\beta$, we have $Z''' = \{f_0(\alpha,\gamma)\alpha^6\gamma = 0\}$, with $f_0(\alpha,\gamma)=(1-\gamma)^2$. The composition of the preceding changes of variables defines an analytic map on $\KK^2$ that restricts to a diffeomorphism onto its image,
\[
  \varpi:B\less \varpi^{-1}(Z) \ni (\alpha,\gamma) \mapsto (x,y) = (\alpha^3(1-\gamma), \alpha^2(1-\gamma)) \in V\less Z,
\]
for an open neighborhood $V$ of the origin, where $\varpi^*f(\alpha,\gamma) = f_0(\alpha,\gamma)\alpha^6\gamma$ and $\varpi^{-1}(Z) = \{\alpha=0 \text{ or } \gamma = 1\}$. We may remove the factor $f_0$ by further choosing $\eta = \gamma(1-\gamma)^2$ near $\gamma=0$ to give 
\begin{equation}
\label{eq:Resolution_alphabeta=(0,1)}
\varpi^*f(\alpha,\eta) = \alpha^6\eta,
\end{equation}
a monomial of total degree $N=7$. The monomial $\alpha^6\eta$ has {\L}ojasiewicz exponent $1-1/N = 6/7$ and so the {\L}ojasiewicz exponent $\theta$ of $f$ obeys $1/2 \leq \theta \leq 6/7$. This completes our example.

While the preceding example illustrates the role of blowing up, the {\L}ojasiewicz exponent of an isolated critical point or zero can often be computed explicitly. For example, by applying Gwo{\'z}dziewicz \cite[Theorem 1.3]{Gwozdziewicz_1999} and modifying its application in \cite[Example, p. 365, and Example, p. 366]{Gwozdziewicz_1999}, one can show that $\theta=2/3$. See also Krasi\'nski, Oleksik, and P{\l}oski \cite[Proposition 2 and p. 3888 for the definition of weighted homogeneous polynomials]{Krasinski_Oleksik_Ploski_2009}. Note also that the Hessian matrix of $f$ at the origin is given by
\[
  \Hess f(0,0) = \begin{pmatrix} 2 & 0 \\ 0 & 0 \end{pmatrix}
\]
and so $f$ is not Morse--Bott at the origin.  

%
%

\bibliography{/Users/pfeehan/Dropbox/LATEX/Bibinputs/master,/Users/pfeehan/Dropbox/LATEX/Bibinputs/mfpde}

\def\cprime{$'$} \def\cprime{$'$}
  \def\ocirc#1{\ifmmode\setbox0=\hbox{$#1$}\dimen0=\ht0 \advance\dimen0
  by1pt\rlap{\hbox to\wd0{\hss\raise\dimen0
  \hbox{\hskip.2em$\scriptscriptstyle\circ$}\hss}}#1\else {\accent"17 #1}\fi}
  \def\cprime{$'$} \def\cprime{$'$} \def\cprime{$'$} \def\cprime{$'$}
  \def\polhk#1{\setbox0=\hbox{#1}{\ooalign{\hidewidth
  \lower1.5ex\hbox{`}\hidewidth\crcr\unhbox0}}} \def\cprime{$'$}
  \def\cprime{$'$} \def\cprime{$'$}
  \def\lfhook#1{\setbox0=\hbox{#1}{\ooalign{\hidewidth
  \lower1.5ex\hbox{'}\hidewidth\crcr\unhbox0}}} \def\cprime{$'$}
  \def\cprime{$'$} \def\cprime{$'$} \def\cprime{$'$} \def\cprime{$'$}
\providecommand{\bysame}{\leavevmode\hbox to3em{\hrulefill}\thinspace}
\providecommand{\MR}{\relax\ifhmode\unskip\space\fi MR }
\providecommand{\MRhref}[2]{%
  \href{http://www.ams.org/mathscinet-getitem?mr=#1}{#2}
}
\providecommand{\href}[2]{#2}
\begin{thebibliography}{100}

\bibitem{Abderrahmane_2005}
O.~M. Abderrahmane, \emph{On the {{\L}}ojasiewicz exponent and {N}ewton
  polyhedron}, Kodai Math. J. \textbf{28} (2005), no.~1, 106--110. \MR{2122194}

\bibitem{Adams_Simon_1988}
David Adams and Leon Simon, \emph{Rates of asymptotic convergence near isolated
  singularities of geometric extrema}, Indiana Univ. Math. J. \textbf{37}
  (1988), 225--254. \MR{963501 (90b:58046)}

\bibitem{Aroca_Hironaka_Vicente_theory_maximal_contact}
J.~M. Aroca, H.~Hironaka, and J.~L. Vicente, \emph{The theory of the maximal
  contact}, Instituto ``Jorge Juan'' de Matem\'aticas, Consejo Superior de
  Investigaciones Cientificas, Madrid, 1975, Memorias de Matem\'atica del
  Instituto ``Jorge Juan'', No. 29. [Mathematical Memoirs of the ``Jorge Juan''
  Institute, No. 29]. \MR{0444999}

\bibitem{Aroca_Hironaka_Vicente_desingularization_theorems}
J.~M. Aroca, H.~Hironaka, and J.~L. Vicente, \emph{Desingularization theorems},
  Memorias de Matem\'atica del Instituto ``Jorge Juan'' [Mathematical Memoirs
  of the Jorge Juan Institute], vol.~30, Consejo Superior de Investigaciones
  Cient\'\i ficas, Madrid, 1977. \MR{480502}

\bibitem{Atiyah_1970}
Michael~F. Atiyah, \emph{Resolution of singularities and division of
  distributions}, Comm. Pure Appl. Math. \textbf{23} (1970), 145--150.
  \MR{0256156}

\bibitem{Austin_Braam_1995}
David~M. Austin and Peter~J. Braam, \emph{Morse--{B}ott theory and equivariant
  cohomology}, The {F}loer memorial volume, Progr. Math., vol. 133,
  Birkh\"auser, Basel, 1995, pp.~123--183. \MR{1362827 (96i:57037)}

\bibitem{Banyaga_Hurtubise_2004}
Augustin Banyaga and David~E. Hurtubise, \emph{A proof of the {M}orse--{B}ott
  lemma}, Expo. Math. \textbf{22} (2004), no.~4, 365--373. \MR{2075744}

\bibitem{Bernstein_Gelfand_1969}
I.~N. Bern\v{s}te\u{\i}n and S.~I. Gel\cprime~fand, \emph{Meromorphy of the
  function {$P^{\lambda }$}}, Funkcional. Anal. i Prilo\v{z}en. \textbf{3}
  (1969), no.~1, 84--85, in Russian; English translation in Functional Analysis
  and Its Applications \textbf{3} (1969), no. 1, pp. 68--69,
  \url{https://doi.org/10.1007/BF01078276}. \MR{0247457}

\bibitem{BierstoneMilman}
Edward Bierstone and Pierre~D. Milman, \emph{Semianalytic and subanalytic
  sets}, Inst. Hautes \'Etudes Sci. Publ. Math. (1988), no.~67, 5--42.
  \MR{972342 (89k:32011)}

\bibitem{Bierstone_Milman_1989}
Edward Bierstone and Pierre~D. Milman, \emph{Uniformization of analytic
  spaces}, J. Amer. Math. Soc. \textbf{2} (1989), no.~4, 801--836. \MR{1001853}

\bibitem{Bierstone_Milman_1997}
Edward Bierstone and Pierre~D. Milman, \emph{Canonical desingularization in
  characteristic zero by blowing up the maximum strata of a local invariant},
  Invent. Math. \textbf{128} (1997), no.~2, 207--302. \MR{1440306}

\bibitem{Bierstone_Milman_1999}
Edward Bierstone and Pierre~D. Milman, \emph{Resolution of singularities},
  Several complex variables ({B}erkeley, {CA}, 1995--1996), Math. Sci. Res.
  Inst. Publ., vol.~37, Cambridge Univ. Press, Cambridge, 1999, pp.~43--78.
  \MR{1748600}

\bibitem{Bivia-Ausina_2015}
C.~Bivi{\`a}-Ausina, \emph{Multiplicity and {{\L}}ojasiewicz exponent of
  generic linear sections of monomial ideals}, Bull. Aust. Math. Soc.
  \textbf{91} (2015), no.~2, 191--201. \MR{3314137}

\bibitem{Bivia-Ausina_Encinas_2009}
C.~Bivi{\`a}-Ausina and S.~Encinas, \emph{{\L}ojasiewicz exponents and
  resolution of singularities}, Arch. Math. (Basel) \textbf{93} (2009), no.~3,
  225--234. \MR{2540788}

\bibitem{Bivia-Ausina_Encinas_2011}
C.~Bivi{\`a}-Ausina and S.~Encinas, \emph{The {{\L}}ojasiewicz exponent of a
  set of weighted homogeneous ideals}, J. Pure Appl. Algebra \textbf{215}
  (2011), no.~4, 578--588. \MR{2738373}

\bibitem{Bivia-Ausina_Encinas_2013}
C.~Bivi{\`a}-Ausina and S.~Encinas, \emph{{\L}ojasiewicz exponent of families
  of ideals, {R}ees mixed multiplicities and {N}ewton filtrations}, Rev. Mat.
  Complut. \textbf{26} (2013), no.~2, 773--798. \MR{3068619}

\bibitem{Bivia-Ausina_Fukui_2016}
C.~Bivi{\`a}-Ausina and T.~Fukui, \emph{Mixed {{\L}}ojasiewicz exponents and
  log canonical thresholds of ideals}, J. Pure Appl. Algebra \textbf{220}
  (2016), no.~1, 223--245. \MR{3393458}

\bibitem{Bott_1954}
Raoul~H. Bott, \emph{Nondegenerate critical manifolds}, Ann. of Math. (2)
  \textbf{60} (1954), 248--261. \MR{0064399 (16,276f)}

\bibitem{Bott_1959}
Raoul~H. Bott, \emph{The stable homotopy of the classical groups}, Ann. of
  Math. (2) \textbf{70} (1959), 313--337. \MR{0110104}

\bibitem{Brzostowski_2015}
S.~Brzostowski, \emph{The {{\L}}ojasiewicz exponent of semiquasihomogeneous
  singularities}, Bull. Lond. Math. Soc. \textbf{47} (2015), no.~5, 848--852.
  \MR{3403966}

\bibitem{Brzostowski_Krasinski_Oleksik_2012}
S.~Brzostowski, T.~Krasi\'nski, and G.~Oleksik, \emph{A conjecture on the
  {{\L}}ojasiewicz exponent}, J. Singul. \textbf{6} (2012), 124--130.
  \MR{2971310}

\bibitem{Bui_Pham_2014}
N.~T.~N. B\`ui and T.~S. Pham, \emph{Computation of the {{\L}}ojasiewicz
  exponent of nonnegative and nondegenerate analytic functions}, Internat. J.
  Math. \textbf{25} (2014), no.~10, 1450092, 13. \MR{3275820}

\bibitem{Chill_2003}
Ralph Chill, \emph{On the {{\L}}ojasiewicz--{S}imon gradient inequality}, J.
  Funct. Anal. \textbf{201} (2003), 572--601. \MR{1986700 (2005c:26019)}

\bibitem{Chill_Haraux_Jendoubi_2009}
Ralph Chill, Alain Haraux, and M.~A. Jendoubi, \emph{Applications of the
  {{\L}}ojasiewicz--{S}imon gradient inequality to gradient-like evolution
  equations}, Anal. Appl. (Singap.) \textbf{7} (2009), 351--372. \MR{2572850
  (2011a:35557)}

\bibitem{Colding_Minicozzi_2014sdg}
Tobias~H. Colding and William~P. Minicozzi, II, \emph{{\L}ojasiewicz
  inequalities and applications}, Surveys in Differential Geometry \textbf{XIX}
  (2014), 63--82, arXiv:1402.5087.

\bibitem{Colding_Minicozzi_2015}
Tobias~H. Colding and William~P. Minicozzi, II, \emph{Uniqueness of blowups and
  {{\L}}ojasiewicz inequalities}, Ann. of Math. (2) \textbf{182} (2015), no.~1,
  221--285. \MR{3374960}

\bibitem{Colding_Minicozzi_Pedersen_2015bams}
Tobias~H. Colding, William~P. Minicozzi, II, and E.~K. Pedersen, \emph{Mean
  curvature flow}, Bull. Amer. Math. Soc. (N.S.) \textbf{52} (2015), no.~2,
  297--333. \MR{3312634}

\bibitem{Collins_Greenleaf_Pramanik_2013}
T.~C. Collins, A.~Greenleaf, and M.~Pramanik, \emph{A multi-dimensional
  resolution of singularities with applications to analysis}, Amer. J. Math.
  \textbf{135} (2013), no.~5, 1179--1252. \MR{3117305}

\bibitem{Cutkosky_resolution_singularities}
S.~D. Cutkosky, \emph{Resolution of singularities}, Graduate Studies in
  Mathematics, vol.~63, American Mathematical Society, Providence, RI, 2004.
  \MR{2058431}

\bibitem{dAcunto_Kurdyka_2005}
D.~D'Acunto and K.~Kurdyka, \emph{Explicit bounds for the {{\L}}ojasiewicz
  exponent in the gradient inequality for polynomials}, Ann. Polon. Math.
  \textbf{87} (2005), 51--61. \MR{2208535}

\bibitem{DeLellis_Encyc-Math_LS_inequality}
C.~De~Lellis, \emph{{{\L}}ojasiewicz--{S}imon inequality}, Encyclopedia of
  Mathematics, Springer Verlag and the European Mathematical Society, 2013,
  \url{http://www.encyclopediaofmath.org/index.php?title=Lojasiewicz-Simon_inequality&oldid=36300}.

\bibitem{Encinas_Villamayor_1998}
S.~Encinas and O.~Villamayor, \emph{Good points and constructive resolution of
  singularities}, Acta Math. \textbf{181} (1998), no.~1, 109--158. \MR{1654779}

\bibitem{Faber_Hauser_2010}
E.~Faber and H.~Hauser, \emph{Today's menu: geometry and resolution of singular
  algebraic surfaces}, Bull. Amer. Math. Soc. (N.S.) \textbf{47} (2010), no.~3,
  373--417. \MR{2651084}

\bibitem{Feehan_yang_mills_gradient_flow_v4}
Paul M.~N. Feehan, \emph{Global existence and convergence of solutions to
  gradient systems and applications to {Y}ang--{M}ills gradient flow},
  arXiv:1409.1525v4, xx+475 pages.

\bibitem{Feehan_lojasiewicz_inequality_all_dimensions_morse-bott}
Paul M.~N. Feehan, \emph{On the {M}orse--{B}ott property of analytic functions
  on {B}anach spaces with {{\L}}ojasiewicz exponent one half}, Calc. Var.
  Partial Differential Equations, in press, arXiv:1803.11319.

\bibitem{Feehan_lojasiewicz_inequality_ground_state}
Paul M.~N. Feehan, \emph{Optimal {{\L}}ojasiewicz--{S}imon inequalities and
  {M}orse--{B}ott {Y}ang--{M}ills energy functions}, arXiv:1706.09349.

\bibitem{Feehan_Maridakis_Lojasiewicz-Simon_Banach}
Paul M.~N. Feehan and Manousos Maridakis, \emph{{{\L}}ojasiewicz--{S}imon
  gradient inequalities for analytic and {M}orse--{B}ott functions on {B}anach
  spaces}, J. Reine Angew. Math., in press, arXiv:1510.03817v8.

\bibitem{Feehan_Maridakis_Lojasiewicz-Simon_coupled_Yang-Mills_v4}
Paul M.~N. Feehan and Manousos Maridakis, \emph{{{\L}}ojasiewicz--{S}imon
  gradient inequalities for coupled {Y}ang--{M}ills energy functions}, Memoirs
  of the American Mathematical Society, American Mathematical Society,
  Providence, RI, in press, arXiv:1510.03815v4.

\bibitem{Fukui_1991}
T.~Fukui, \emph{{{\L}}ojasiewicz type inequalities and {N}ewton diagrams},
  Proc. Amer. Math. Soc. \textbf{112} (1991), no.~4, 1169--1183. \MR{1065945}

\bibitem{Gabrielov_1968}
A.~M. Gabri\`elov, \emph{Projections of semianalytic sets}, Functional Anal.
  Appl. \textbf{2} (1968), no.~4, 282--291. \MR{0245831}

\bibitem{Gabrielov_1995}
A.~M. Gabri\`elov, \emph{Multiplicities of {P}faffian intersections, and the
  {\l}ojasiewicz inequality}, Selecta Math. (N.S.) \textbf{1} (1995), no.~1,
  113--127. \MR{1327229}

\bibitem{Grauert_Remmert_analytic_spaces}
H.~Grauert and R.~Remmert, \emph{Analytische {S}tellenalgebren},
  Springer--Verlag, Berlin-New York, 1971, Unter Mitarbeit von O.
  Riemenschneider, Die Grundlehren der mathematischen Wissenschaften, Band 176.
  \MR{0316742}

\bibitem{Grauert_Remmert_coherent_analytic_sheaves}
H.~Grauert and R.~Remmert, \emph{Coherent analytic sheaves}, Grundlehren der
  Mathematischen Wissenschaften [Fundamental Principles of Mathematical
  Sciences], vol. 265, Springer--Verlag, Berlin, 1984. \MR{755331}

\bibitem{Greenblatt_2008}
M.~Greenblatt, \emph{An elementary coordinate-dependent local resolution of
  singularities and applications}, J. Funct. Anal. \textbf{255} (2008), no.~8,
  1957--1994. \MR{2462583}

\bibitem{GriffithsHarris}
P.~Griffiths and J.~Harris, \emph{Principles of algebraic geometry},
  Wiley-Interscience [John Wiley \& Sons], New York, 1978, Pure and Applied
  Mathematics. \MR{507725 (80b:14001)}

\bibitem{Guaraldo_Macri_Tancredi_topics_real_analytic_spaces}
F.~Guaraldo, P.~Macr\`\i, and A.~Tancredi, \emph{Topics on real analytic
  spaces}, Advanced Lectures in Mathematics, Friedr. Vieweg \& Sohn,
  Braunschweig, 1986. \MR{1013362}

\bibitem{Gwozdziewicz_1999}
J.~Gwo\'zdziewicz, \emph{The {{\L}}ojasiewicz exponent of an analytic function
  at an isolated zero}, Comment. Math. Helv. \textbf{74} (1999), no.~3,
  364--375. \MR{1710702}

\bibitem{Haraux_2005}
Alain Haraux, \emph{Positively homogeneous functions and the {{\L}}ojasiewicz
  gradient inequality}, Ann. Polon. Math. \textbf{87} (2005), 165--174.
  \MR{2208543}

\bibitem{Haraux_Jendoubi_2007}
Alain Haraux and M.~A. Jendoubi, \emph{On the convergence of global and bounded
  solutions of some evolution equations}, J. Evol. Equ. \textbf{7} (2007),
  449--470. \MR{2328934 (2008k:35480)}

\bibitem{Haraux_Jendoubi_convergence_dissipative_autonomous_systems}
Alain Haraux and M.~A. Jendoubi, \emph{The convergence problem for dissipative
  autonomous systems}, Springer Briefs in Mathematics, Springer, Cham; BCAM
  Basque Center for Applied Mathematics, Bilbao, 2015, Classical methods and
  recent advances, BCAM Springer Briefs. \MR{3380967}

\bibitem{Haraux_Jendoubi_Kavian_2003}
Alain Haraux, M.~A. Jendoubi, and O.~Kavian, \emph{Rate of decay to equilibrium
  in some semilinear parabolic equations}, J. Evol. Equ. \textbf{3} (2003),
  463--484. \MR{2019030 (2004k:35187)}

\bibitem{Haraux_Pham_2011}
Alain Haraux and T.~S. Pham, \emph{On the gradient of quasi-homogeneous
  polynomials}, Univ. Iagel. Acta Math. (2011), no.~49, 45--57. \MR{3076842}

\bibitem{Haraux_Pham_2015}
Alain Haraux and T.~S. Pham, \emph{On the {{\L}}ojasiewicz exponents of
  quasi-homogeneous functions}, J. Singul. \textbf{11} (2015), 52--66.
  \MR{3341438}

\bibitem{Hardt_1975}
R.~M. Hardt, \emph{Stratification of real analytic mappings and images},
  Invent. Math. \textbf{28} (1975), 193--208. \MR{0372237}

\bibitem{Hardt_1977}
R.~M. Hardt, \emph{Triangulation of subanalytic sets and proper light
  subanalytic maps}, Invent. Math. \textbf{38} (1976/77), no.~3, 207--217.
  \MR{0454051}

\bibitem{HardyLittlewoodPolya}
G.~H. Hardy, J.~E. Littlewood, and G.~P{\'o}lya, \emph{Inequalities}, Cambridge
  University Press, Cambridge, 1952, 2nd ed.

\bibitem{Hartman_2002}
P.~Hartman, \emph{Ordinary differential equations}, Classics in Applied
  Mathematics, vol.~38, Society for Industrial and Applied Mathematics (SIAM),
  Philadelphia, PA, 2002, Corrected reprint of the second (1982) edition.
  \MR{1929104 (2003h:34001)}

\bibitem{Hartshorne_algebraic_geometry}
R.~Hartshorne, \emph{Algebraic geometry}, Springer--Verlag, New
  York-Heidelberg, 1977, Graduate Texts in Mathematics, No. 52. \MR{0463157}

\bibitem{Hauser_2003}
H.~Hauser, \emph{The {H}ironaka theorem on resolution of singularities (or: {A}
  proof we always wanted to understand)}, Bull. Amer. Math. Soc. (N.S.)
  \textbf{40} (2003), no.~3, 323--403. \MR{1978567}

\bibitem{Hironaka_1963}
Heisuke Hironaka, \emph{On resolution of singularities (characteristic zero)},
  Proc. {I}nternat. {C}ongr. {M}athematicians ({S}tockholm, 1962), Inst.
  Mittag-Leffler, Djursholm, 1963, pp.~507--521. \MR{0175898}

\bibitem{Hironaka_1964-I-II}
Heisuke Hironaka, \emph{Resolution of singularities of an algebraic variety
  over a field of characteristic zero. {I}, {II}}, Ann. of Math. (2) {\bf 79}
  (1964), 109--203; ibid. (2) \textbf{79} (1964), 205--326. \MR{0199184}

\bibitem{Hironaka_intro_real-analytic_sets_maps}
Heisuke Hironaka, \emph{Introduction to real-analytic sets and real-analytic
  maps}, Istituto Matematico ``L. Tonelli'' dell'Universit\`a di Pisa, Pisa,
  1973, Quaderni dei Gruppi di Ricerca Matematica del Consiglio Nazionale delle
  Ricerche. \MR{0477121}

\bibitem{Hironaka_1973}
Heisuke Hironaka, \emph{Subanalytic sets},  (1973), 453--493. \MR{0377101}

\bibitem{Hironaka_infinitely_near_singular_points}
Heisuke Hironaka, \emph{Introduction to the theory of infinitely near singular
  points}, Consejo Superior de Investigaciones Cient\'\i ficas, Madrid, 1974,
  Memorias de Matematica del Instituto ``Jorge Juan'', No. 28. \MR{0399505}

\bibitem{Hironaka_1977}
Heisuke Hironaka, \emph{Stratification and flatness},  (1977), 199--265.
  \MR{0499286}

\bibitem{Holm_Karshon_2016}
Tara Holm and Yael Karshon, \emph{The {M}orse--{B}ott--{K}irwan condition is
  local}, Res. Math. Sci. \textbf{3} (2016), Paper No. 25, 25. \MR{3579296}

\bibitem{Hormander_1958}
Lars H{\"o}rmander, \emph{On the division of distributions by polynomials},
  Ark. Mat. \textbf{3} (1958), 555--568. \MR{0124734}

\bibitem{Huang_2006}
Sen-Zhong Huang, \emph{Gradient inequalities}, Mathematical Surveys and
  Monographs, vol. 126, American Mathematical Society, Providence, RI, 2006.
  \MR{2226672 (2007b:35035)}

\bibitem{Ji_Kollar_Shiffman_1992}
S.~Ji, J.~Koll\'ar, and B.~Shiffman, \emph{A global {{\L}}ojasiewicz inequality
  for algebraic varieties}, Trans. Amer. Math. Soc. \textbf{329} (1992), no.~2,
  813--818. \MR{1046016}

\bibitem{Kashiwara_Schapira_sheaves_manifolds}
M.~Kashiwara and P.~Schapira, \emph{Sheaves on manifolds}, Grundlehren der
  Mathematischen Wissenschaften [Fundamental Principles of Mathematical
  Sciences], vol. 292, Springer--Verlag, Berlin, 1994, With a chapter in French
  by Christian Houzel, Corrected reprint of the 1990 original. \MR{1299726}

\bibitem{Kirwan_cohomology_quotients_symplectic_algebraic_geometry}
F.~C. Kirwan, \emph{Cohomology of quotients in symplectic and algebraic
  geometry}, Mathematical Notes, vol.~31, Princeton University Press,
  Princeton, NJ, 1984. \MR{766741}

\bibitem{Kollar_1999}
J.~Koll\'ar, \emph{An effective {{\L}}ojasiewicz inequality for real
  polynomials}, Period. Math. Hungar. \textbf{38} (1999), no.~3, 213--221.
  \MR{1756239}

\bibitem{Kollar_lectures_resolution_singularities}
J.~Koll\'ar, \emph{Lectures on resolution of singularities}, Annals of
  Mathematics Studies, vol. 166, Princeton University Press, Princeton, NJ,
  2007. \MR{2289519}

\bibitem{Kowalsky_topological_spaces}
Hans-Joachim Kowalsky, \emph{Topological spaces}, Academic Press, New York,
  1964, translated from the German edition, Birkh{\"a}user Verlag,
  Basel-Stuttgart, 1961. \MR{0121768}

\bibitem{Krasinski_Oleksik_Ploski_2009}
T.~Krasi\'nski, G.~Oleksik, and A.~P{\l}oski, \emph{The {{\L}}ojasiewicz
  exponent of an isolated weighted homogeneous surface singularity}, Proc.
  Amer. Math. Soc. \textbf{137} (2009), no.~10, 3387--3397. \MR{2515408}

\bibitem{Kuiper_1972}
Nicolaas~H. Kuiper, \emph{{$C^{1}$}-equivalence of functions near isolated
  critical points},  (1972), 199--218. Ann. of Math. Studies, No. 69.
  \MR{0413161}

\bibitem{Kuo_HH_1974}
Hui~Hsiung Kuo, \emph{The {M}orse-{P}alais lemma on {B}anach spaces}, Bull.
  Amer. Math. Soc. \textbf{80} (1974), 363--365. \MR{0334274}

\bibitem{Kurdyka_Mostowski_Parusinski_2000}
K.~Kurdyka, T.~Mostowski, and A.~Parusi{\'n}ski, \emph{Proof of the gradient
  conjecture of {R}. {T}hom}, Ann. of Math. (2) \textbf{152} (2000), 763--792.
  \MR{1815701 (2002d:37028)}

\bibitem{Kurdyka_Parusinski_1994}
K.~Kurdyka and A.~Parusi{\'n}ski, \emph{{${\bf w}_f$}-stratification of
  subanalytic functions and the {{\L}}ojasiewicz inequality}, C. R. Acad. Sci.
  Paris S\'er. I Math. \textbf{318} (1994), no.~2, 129--133. \MR{1260324}

\bibitem{Lang_analysis}
Serge Lang, \emph{Real and functional analysis}, third ed., Graduate Texts in
  Mathematics, vol. 142, Springer--Verlag, New York, 1993. \MR{1216137}

\bibitem{Lazarsfeld_positivity_algebraic_geometry_v1}
R.~Lazarsfeld, \emph{Positivity in algebraic geometry. {I}}, Ergebnisse der
  Mathematik und ihrer Grenzgebiete. 3. Folge. A Series of Modern Surveys in
  Mathematics [Results in Mathematics and Related Areas. 3rd Series. A Series
  of Modern Surveys in Mathematics], vol.~48, Springer--Verlag, Berlin, 2004,
  Classical setting: line bundles and linear series. \MR{2095471}

\bibitem{Lenarcik_1998}
A.~Lenarcik, \emph{On the {{\L}}ojasiewicz exponent of the gradient of a
  holomorphic function}, Singularities {S}ymposium---\L ojasiewicz 70
  ({K}rak\'ow, 1996; {W}arsaw, 1996), Banach Center Publ., vol.~44, Polish
  Acad. Sci. Inst. Math., Warsaw, 1998, pp.~149--166. \MR{1677363}

\bibitem{Lenarcik_1999}
A.~Lenarcik, \emph{On the {{\L}}ojasiewicz exponent of the gradient of a
  polynomial function}, Ann. Polon. Math. \textbf{71} (1999), no.~3, 211--239.
  \MR{1704300}

\bibitem{Lerman_2005}
E.~Lerman, \emph{Gradient flow of the norm squared of a moment map}, Enseign.
  Math. (2) \textbf{51} (2005), 117--127. \MR{2154623 (2006b:53106)}

\bibitem{Lichtin_1981}
B.~Lichtin, \emph{Estimation of {{\L}}ojasiewicz exponents and {N}ewton
  polygons}, Invent. Math. \textbf{64} (1981), no.~3, 417--429. \MR{632982}

\bibitem{Lion_2000}
J.-M. Lion, \emph{In\'egalit\'e de {L}ojasiewicz en g\'eom\'etrie pfaffienne},
  Illinois J. Math. \textbf{44} (2000), no.~4, 889--900. \MR{1804312}

\bibitem{Lojasiewicz_1959}
Stanis{\l}aw {\L}ojasiewicz, \emph{Sur le probl\`eme de la division}, Studia
  Math. \textbf{18} (1959), 87--136. \MR{0107168 (21 \#5893)}

\bibitem{Lojasiewicz_1961}
Stanis{\l}aw {\L}ojasiewicz, \emph{Sur le probl\`eme de la division}, Rozprawy
  Mat. \textbf{22} (1961), 1--57. \MR{0126072}

\bibitem{Lojasiewicz_1963}
Stanis{\l}aw {\L}ojasiewicz, \emph{Une propri\'et\'e topologique des
  sous-ensembles analytiques r\'eels}, Les \'{E}quations aux {D}\'eriv\'ees
  {P}artielles ({P}aris, 1962), \'Editions du Centre National de la Recherche
  Scientifique, Paris, 1963, pp.~87--89. \MR{0160856 (28 \#4066)}

\bibitem{Lojasiewicz_1964}
Stanis{\l}aw {\L}ojasiewicz, \emph{Triangulation of semi-analytic sets}, Ann.
  Scuola Norm. Sup. Pisa (3) \textbf{18} (1964), 449--474. \MR{0173265}

\bibitem{Lojasiewicz_1965}
Stanis{\l}aw {\L}ojasiewicz, \emph{Ensembles semi-analytiques},  (1965), Publ.
  Inst. Hautes Etudes Sci., Bures-sur-Yvette. LaTeX version by M. Coste, August
  29, 2006 based on mimeographed course notes by S. {\L}ojasiewicz, available
  at \url{perso.univ-rennes1.fr/michel.coste/Lojasiewicz.pdf}.

\bibitem{Lojasiewicz_1970}
Stanis{\l}aw {\L}ojasiewicz, \emph{Sur les ensembles semi-analytiques},
  (1971), 237--241. \MR{0425152}

\bibitem{Lojasiewicz_1984}
Stanis{\l}aw {\L}ojasiewicz, \emph{Sur les trajectoires du gradient d'une
  fonction analytique}, Geometry seminars, 1982--1983 ({B}ologna, 1982/1983),
  Univ. Stud. Bologna, Bologna, 1984, pp.~115--117. \MR{771152 (86m:58023)}

\bibitem{Lojasiewicz_1993}
Stanis{\l}aw {\L}ojasiewicz, \emph{Sur la g\'eom\'etrie semi- et
  sous-analytique}, Ann. Inst. Fourier (Grenoble) \textbf{43} (1993),
  1575--1595. \MR{1275210 (96c:32007)}

\bibitem{Lojasiewicz_Zurro_1999}
Stanis{\l}aw {\L}ojasiewicz and M.-A. Zurro, \emph{On the gradient inequality},
  Bull. Polish Acad. Sci. Math. \textbf{47} (1999), no.~2, 143--145.
  \MR{1686676}

\bibitem{Narasimhan_introduction_theory_analytic_spaces}
R.~Narasimhan, \emph{Introduction to the theory of analytic spaces}, Lecture
  Notes in Mathematics, No. 25, Springer--Verlag, Berlin-New York, 1966.
  \MR{0217337}

\bibitem{Nicolaescu_morse_theory}
Liviu~I. Nicolaescu, \emph{An invitation to {M}orse theory}, second ed.,
  Universitext, Springer, New York, 2011. \MR{2883440 (2012i:58007)}

\bibitem{Oka_2017arxiv}
M.~Oka, \emph{{\L}ojasiewicz exponents of non-degenerate holomorphic and mixed
  functions}, arXiv:1704.06379.

\bibitem{Oleksik_2009}
G.~Oleksik, \emph{The {{\L}}ojasiewicz exponent of nondegenerate
  singularities}, Univ. Iagel. Acta Math. (2009), no.~47, 301--308.
  \MR{2743702}

\bibitem{Palis_Welington_geometric_theory_dynamical_systems}
J.~Palis, Jr. and W.~de~Melo, \emph{Geometric theory of dynamical systems},
  Springer--Verlag, New York-Berlin, 1982, An introduction, Translated from the
  Portuguese by A. K. Manning. \MR{669541}

\bibitem{Pham_2011}
T.~S. Pham, \emph{The {{\L}}ojasiewicz exponent of a continuous subanalytic
  function at an isolated zero}, Proc. Amer. Math. Soc. \textbf{139} (2011),
  no.~1, 1--9. \MR{2729065}

\bibitem{Pham_2012}
T.~S. Pham, \emph{An explicit bound for the {{\L}}ojasiewicz exponent of real
  polynomials}, Kodai Math. J. \textbf{35} (2012), no.~2, 311--319.
  \MR{2951259}

\bibitem{Ploski_2009}
A.~P{\l}oski, \emph{Semicontinuity of the {{\L}}ojasiewicz exponent}, Univ.
  Iagel. Acta Math. (2010), no.~48, 103--110. \MR{2882792}

\bibitem{Risler_Trotman_1997}
J.-J. Risler and D.~Trotman, \emph{Bi-{L}ipschitz invariance of the
  multiplicity}, Bull. London Math. Soc. \textbf{29} (1997), no.~2, 200--204.
  \MR{1425999}

\bibitem{Rodak_Spodzieja_2011}
T.~Rodak and S.~Spodzieja, \emph{Effective formulas for the local
  {{\L}}ojasiewicz exponent}, Math. Z. \textbf{268} (2011), no.~1-2, 37--44.
  \MR{2805423}

\bibitem{Rothe_1973}
E.~H. Rothe, \emph{Morse theory in {H}ilbert space}, Rocky Mountain J. Math.
  \textbf{3} (1973), 251--274, Rocky Mountain Consortium Symposium on Nonlinear
  Eigenvalue Problems (Santa Fe, N.M., 1971). \MR{0334275}

\bibitem{Rade_1992}
Johan R\r{a}de, \emph{On the {Y}ang--{M}ills heat equation in two and three
  dimensions}, J. Reine Angew. Math. \textbf{431} (1992), 123--163. \MR{1179335
  (94a:58041)}

\bibitem{Schwartz_1955}
L.~Schwartz, \emph{Division par une fonction holomorphe sur une vari\'et\'e
  analytique complexe}, Summa Brasil. Math. \textbf{3} (1955), 181--209 (1955).
  \MR{0139937}

\bibitem{Schwartz_distributions_v1}
L.~Schwartz, \emph{Th\'eorie des distributions \`a valeurs vectorielles. {I}},
  Ann. Inst. Fourier, Grenoble \textbf{7} (1957), 1--141. \MR{0107812}

\bibitem{Shafarevich_v1}
I.~R. Shafarevich, \emph{Basic algebraic geometry. 1}, third ed., Springer,
  Heidelberg, 2013, Varieties in projective space. \MR{3100243}

\bibitem{Shafarevich_v2}
I.~R. Shafarevich, \emph{Basic algebraic geometry. 2}, third ed., Springer,
  Heidelberg, 2013, Schemes and complex manifolds, Translated from the 2007
  third Russian edition by Miles Reid. \MR{3100288}

\bibitem{Simon_1983}
Leon Simon, \emph{Asymptotics for a class of nonlinear evolution equations,
  with applications to geometric problems}, Ann. of Math. (2) \textbf{118}
  (1983), 525--571. \MR{727703 (85b:58121)}

\bibitem{Simon_1985}
Leon Simon, \emph{Isolated singularities of extrema of geometric variational
  problems}, Lecture Notes in Math., vol. 1161, Springer, Berlin, 1985.
  \MR{821971 (87d:58045)}

\bibitem{Simon_1996}
Leon Simon, \emph{Theorems on regularity and singularity of energy minimizing
  maps}, Lectures in Mathematics ETH Z\"urich, Birkh{\"a}user, Basel, 1996.
  \MR{1399562 (98c:58042)}

\bibitem{Smith_jyvaskyla_summer_school}
Karen~E. Smith, \emph{Resolution of singularities}, Jyvaskyla Summer School
  Lecture Notes 2016, Finland,
  \url{http://www.math.lsa.umich.edu/~kesmith/JyvSummerSchool.html}.

\bibitem{Smith_Kahanpaa_Kekalainen_Traves_invitation_algebraic_geometry}
Karen~E. Smith, Lauri Kahanp\"{a}\"{a}, Pekka Kek\"{a}l\"{a}inen, and William
  Traves, \emph{An invitation to algebraic geometry}, Universitext,
  Springer-Verlag, New York, 2000. \MR{1788561}

\bibitem{Spodzieja_2005}
S.~Spodzieja, \emph{The {{\L}}ojasiewicz exponent of subanalytic sets}, Ann.
  Polon. Math. \textbf{87} (2005), 247--263. \MR{2208551}

\bibitem{Tan_Yau_Zuo_2010}
S.~Tan, S.~S.-T. Yau, and H.~Zuo, \emph{{\L}ojasiewicz inequality for weighted
  homogeneous polynomial with isolated singularity}, Proc. Amer. Math. Soc.
  \textbf{138} (2010), no.~11, 3975--3984. \MR{2679619}

\bibitem{Teissier_1977}
B.~Teissier, \emph{Vari\'et\'es polaires. {I}. {I}nvariants polaires des
  singularit\'es d'hypersurfaces}, Invent. Math. \textbf{40} (1977), no.~3,
  267--292. \MR{0470246}

\bibitem{Villamayor_1989}
O.~Villamayor, \emph{Constructiveness of {H}ironaka's resolution}, Ann. Sci.
  \'Ecole Norm. Sup. (4) \textbf{22} (1989), no.~1, 1--32. \MR{985852}

\bibitem{Villamayor_1992}
O.~Villamayor, \emph{Patching local uniformizations}, Ann. Sci. \'Ecole Norm.
  Sup. (4) \textbf{25} (1992), no.~6, 629--677. \MR{1198092}

\bibitem{Wlodarczyk_2005}
J.~W{\l}odarczyk, \emph{Simple {H}ironaka resolution in characteristic zero},
  J. Amer. Math. Soc. \textbf{18} (2005), no.~4, 779--822. \MR{2163383}

\bibitem{Wlodarczyk_2008}
J.~W{\l}odarczyk, \emph{Resolution of singularities of analytic spaces},
  Proceedings of {G}\"okova {G}eometry-{T}opology {C}onference 2008, G\"okova
  Geometry/Topology Conference (GGT), G\"okova, 2009, pp.~31--63. \MR{2500573}

\end{thebibliography}
\bibliographystyle{amsplain-nodash}

\end{document}